\newtheorem{theorem}{Theorem}[section]
\newtheorem{corollary}[theorem]{Corollary}
\newtheorem{lemma}[theorem]{Lemma}
\newtheorem{proposition}[theorem]{Proposition}
\theoremstyle{definition}
\newtheorem{definition}[theorem]{Definition}
\newtheorem{remark}[theorem]{Remark}
\newtheorem{example}[theorem]{Example}
\DeclareMathAlphabet{\mathpzc}{OT1}{pzc}{m}{it}
\DeclareMathOperator{\Perf}{\mathsf{Perf}}
\DeclareMathOperator{\Com}{\mathsf{Com}}
\DeclareMathOperator{\Band}{\mathsf{Band}}
\DeclareMathOperator{\Sing}{\mathsf{Sing}}
\DeclareMathOperator{\coker}{\mathsf{coker}}
\renewcommand{\ker}{\mathsf{ker}}
\renewcommand{\dim}{\mathsf{dim}}
\DeclareMathOperator{\Coh}{\mathsf{Coh}}
\DeclareMathOperator{\MCM}{\mathsf{MCM}}
\DeclareMathOperator{\Hot}{\mathsf{Hot}}
\DeclareMathOperator{\Pro}{\mathsf{pro}}
\DeclareMathOperator{\Ob}{\mathsf{Ob}}
\DeclareMathOperator{\krdim}{\mathsf{kr.dim}}
\DeclareMathOperator{\gldim}{\mathsf{gl.dim}}
\DeclareMathOperator{\add}{\mathsf{add}}
\DeclareMathOperator{\Supp}{\mathsf{Supp}}
\DeclareMathOperator{\Hom}{\mathsf{Hom}}
\DeclareMathOperator{\Ext}{\mathsf{Ext}}
\DeclareMathOperator{\End}{\mathsf{End}}
\DeclareMathOperator{\Spec}{\mathsf{Spec}}
\newcommand{\kk}{\mathbbm{k}}
\newcommand{\bsm}{\begin{smallmatrix}}
\newcommand{\esm}{\end{smallmatrix}}
\renewcommand{\mod}{\mathsf{mod}}
\newcommand{\fdmod}{\mathsf{fdmod}}
\newcommand{\LL}{\mathbb{L}}
\newcommand{\kA}{\mathcal{A}}
\newcommand{\kC}{\mathcal{C}}
\newcommand{\kE}{\mathcal{E}}
\newcommand{\kF}{\mathcal{F}}
\newcommand{\kH}{\mathcal{H}}
\newcommand{\kI}{\mathcal{I}}
\newcommand{\kO}{\mathcal{O}}
\newcommand{\kP}{\mathcal{P}}
\newcommand{\kQ}{\mathcal{Q}}
\newcommand{\kK}{\mathcal{K}}
\newcommand{\kM}{\mathcal{M}}
\newcommand{\kN}{\mathcal{N}}
\newcommand{\kT}{\mathcal{T}}
\newcommand{\kS}{\mathcal{S}}
\newcommand{\kR}{\mathcal{R}}
\newcommand{\kU}{\mathcal{U}}
\newcommand{\cP}{\mathsf{P}}
\newcommand{\lar}{\longrightarrow}
\newcommand{\idm}{\mathfrak{m}}
\renewcommand{\AA}{\mathbb{A}}
\newcommand{\FF}{\mathbb{F}}
\newcommand{\GG}{\mathbb{G}}
\newcommand{\HH}{\mathbb{H}}
\newcommand{\TT}{\mathbb{T}}
\newcommand{\II}{\mathbb{I}}
\newcommand{\EE}{\mathbb{E}}
\newcommand{\PP}{\mathbb{P}}
\newcommand{\UU}{\mathbb{U}}
\newcommand{\ZZ}{\mathbb{Z}}
\newcommand{\XX}{\mathbb{X}}
\newcommand{\YY}{\mathbb{Y}}
\newcommand{\String}[3]{\mathcal{S}_{#1}(#2)[#3]}
\newcommand{\lo}{\gamma}
\newcommand{\ro}{\alpha}
\newcommand{\ru}{\delta}
\newcommand{\lu}{\beta}
\newcommand{\li}{-}
\newcommand{\re}{+}
\begin{document}

\title[Singularity category of a non-commutative
resolution]{Relative singularity category of a non-commutative
resolution of singularities}

\author{Igor Burban}
\address{
Universit\"at zu K\"oln,
Mathematisches Institut,
Weyertal 86-90,
D-50931 K\"oln,
Germany
}
\email{burban@math.uni-koeln.de}

\author{Martin Kalck}
\address{
Mathematisches Institut,
Universit\"at Bonn,
Endenicher Allee 60,
D-53115 Bonn,
Germany
}

\email{kalck@math.uni-bonn.de}

\subjclass[2000]{Primary 14F05, Secondary 14A22, 18E30}

\begin{abstract}
In this article, we study a
 triangulated category associated with  a non-commutative resolution of singularities.
In particular, we give a complete description of this category
in the case of  a curve with nodal singularities, classifying its  indecomposable objects and computing its Auslander--Reiten quiver and K--group.
\end{abstract}

\maketitle

\section{Introduction}

This article grew up from an attempt to generalize the following statement, which is a consequence of
a theorem of   Buchweitz \cite[Theorem 4.4.1]{Buchweitz} and results on idempotent completions of triangulated categories
\cite{KellerVandenBergh, Orlov}. Let $X$ be an algebraic  variety with    \emph{isolated
Gorenstein} singularities, $Z = \Sing(X) = \bigl\{x_1, \dots, x_p\bigr\}$ and
$\widehat{O}_i := \widehat{\kO}_{X, x_i}$ for all $1 \le i \le p$. Then we have an equivalence
of triangulated categories
\begin{equation}\label{E:Buchweitz}
\left(\frac{D^b\bigl(\Coh(X)\bigr)}{\Perf(X)}\right)^\omega \stackrel{\sim}\lar  \bigvee_{i = 1}^p
\underline{\MCM}\!\left(\widehat{O}_i\right).
\end{equation}
The left-hand side of (\ref{E:Buchweitz}) stands for  the idempotent completion of the Verdier quotient
${D^b\bigl(\Coh(X)\bigr)}/{\Perf(X)}$ (known to be triangulated by
\cite{BalmerSchlichting}),  whereas on the right-hand side $\underline{\MCM}(\widehat{O}_i)$ denotes
the stable
category of  maximal Cohen-Macaulay modules over  $\widehat{O}_i$.

We want to generalize this construction as follows. Let $\kF' \in \Coh(X)$,
$\kF := \kO \oplus \kF'$ and $\kA := {\mathcal End}_X(\kF)$. Consider the ringed space
$\mathbb{X} := (X, \kA)$. It is  well-known that  the functor
$$
\kF \stackrel{\mathbb{L}}\otimes_X \,-\, : \Perf(X) \lar D^b\bigl(\Coh(\XX)\bigr)
$$
is fully faithful, see for instance \cite[Theorem 2]{Tilting}. If $\mathrm{gl.dim}\bigl(\Coh(\XX)\bigr) < \infty$ then $\XX$ can be viewed
as a \emph{non-commutative} (or \emph{categorical}) resolution of singularities of $X$, in
the spirit of works of Van den Bergh \cite{vdBergh2}, Kuznetsov \cite{Kuznetsov} and Lunts
\cite{Lunts}. To measure the difference between $\Perf(X)$ and $D^b\bigl(\Coh(\XX)\bigr)$, we
suggest to study the triangulated category
\begin{equation}
\Delta_{X}(\XX) := \left(\frac{D^b\bigl(\Coh(\XX)\bigr)}{\Perf(X)}\right)^\omega,
\end{equation}
which we shall call \emph{relative singularity category}.
Assuming $\kF$ to be locally free on $U := X \setminus Z$, we prove an analogue of the
``localization equivalence'' (\ref{E:Buchweitz}) for the category $\Delta_{X}(\XX)$.
Using the negative K-theory of derived categories  of Schlichting \cite{Schlichting},
we also describe the Grothendieck group of $\Delta_{X}(\XX)$.

The main result of our  article is a complete description of $\Delta_{Y}(\YY)$ in case
$Y$ is an arbitrary  \emph{curve} with \emph{nodal singularities} and $\kF' := \kI_Z$ is the ideal sheaf
 of the singular locus of $Y$. We show that $\Delta_{Y}(\YY)$
splits into a union of $p$ blocks: $\Delta_{Y}(\YY) \stackrel{\sim}\lar  \bigvee _{i = 1}^p \Delta_i$, where
$p$ is the number of singular points of $Y$.  Moreover,
each block $\Delta_i$  turns out to be  equivalent to the category $\Delta_{\mathsf{nd}}$
defined as follows:
$$
\Delta_{\mathsf{nd}}:= \frac{\Hot^b\bigl(\Pro(A_{\mathsf{nd}})\bigr)}{\Hot^b\bigl(\add(P_*)\bigr)},
$$
where $A_{\mathsf{nd}}$ is the completed  path algebra of the following quiver with relations:
$$
\begin{xy}\SelectTips{cm}{}
\xymatrix
{
\li \ar@/^/[rr]^{ \ro }  & &  \ast \ar@/^/[ll]^{ \lu }
 \ar@/_/[rr]_{ \ru }
 & &
\ar@/_/[ll]_{ \lo } \re}\end{xy}  \qquad  \ru   \ro  = 0, \quad   \lu   \lo  = 0
$$
and $P_{*}$ is the indecomposable projective $A_{\mathsf{nd}}$--module corresponding to the vertex $*$.
We prove that the category $\Delta_{\mathsf{nd}}$ is idempotent complete and $\Hom$--finite. Moreover, we give
a complete classification of  indecomposable objects of $\Delta_{\mathsf{nd}}$.

\medskip
\noindent
Finally, we show that $\Delta_{\mathsf{nd}}$ has the following interesting  description:
$$
\Delta_{\mathsf{nd}} \stackrel{\sim}\lar  \left(\frac{D^b\bigl(\Lambda-\mod\bigr)}{\Band(\Lambda)}\right)^\omega,
$$
where $\Lambda$ is the path algebra of the following quiver with relations
$$
\begin{xy}\SelectTips{cm}{}
\xymatrix
{
\circ \ar@/^/[rr]^{a} \ar@/_/[rr]_{c} & & \circ \ar@/^/[rr]^{b} \ar@/_/[rr]_{d} & & \circ
}
\end{xy}
\qquad ba = 0,  \quad dc =  0
$$
and $\Band(\Lambda)$ is the category of the \emph{band objects} in $D^b\bigl(\Lambda-\mod\bigr)$, i.e.~those objects which are invariant under
the Auslander--Reiten translation in $D^b\bigl(\Lambda-\mod\bigr)$. Using this result, we describe
the Auslander--Reiten quiver of $\Delta_{\mathsf{nd}}.$

\medskip
\noindent
\emph{Acknowledgement}. We would like to thank Nicolas Haupt, Jens Hornbostel,
Bernhard Keller, Henning Krause, Helmut Lenzing, Michel Van den Bergh and Dong Yang for helpful discussions of parts of this article. We are  also grateful to
the anonymous referee for his/her  comments and suggestions. This  work  was supported  by the DFG grant Bu--1866/2--1.

\section{Generalities on the non-commutative category of singularities}
Let $\kk$ be an algebraically closed field, $X$ be  a separated  excellent Noetherian  scheme over $\kk$ such that any
coherent sheaf on $X$ is a quotient of a locally free sheaf, and  $Z$ be
 the singular locus of $X$. Let $\kF'$ be a coherent sheaf on $X$, $\kF = \kO \oplus \kF'$ and $\kA := {\mathcal End}_X(\kF)$. Consider the non-commutative ringed space $\XX = (X, \kA)$.
 Note that $\kF$ is a \emph{locally projective} coherent left $\kA$--module.
  The following result is well-known, see e.g.~\cite[Theorem 2]{Tilting}.
\begin{proposition}\label{P:startingpoint}
The functor $
\FF := \kF \stackrel{\LL}\otimes_X \,-\,\colon \Perf(X) \rightarrow  D^b\bigl(\Coh(\XX)\bigr)
$
is fully faithful.
\end{proposition}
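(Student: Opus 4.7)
The plan is to exhibit a right adjoint $\GG$ to $\FF$ and show that the unit of adjunction $\eta\colon \mathrm{id}_{\Perf(X)} \to \GG \circ \FF$ is a natural isomorphism; this is equivalent to $\FF$ being fully faithful.

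First I would record the structural input. The splitting $\kF = \kO \oplus \kF'$ produces a \emph{global} idempotent $e \in \Gamma(X, \kA)$, namely the projection onto the $\kO$-summand. Evaluation at the unit section $1 \in \Gamma(X, \kO)$ then gives an isomorphism $\kA e \xrightarrow{\sim} \kF$ of left $\kA$-modules, since a left $\kA$-linear map out of $\kA e$ is determined by the image of $e$, which may be an arbitrary section of $\kF$. Two consequences follow: $\kF$ is a projective left $\kA$-module, so $\RHom_\kA(\kF,-) = \Hom_\kA(\kF,-)$; and there is a functorial identity $\Hom_\kA(\kF, \kN) \cong e\kN$ for every left $\kA$-module $\kN$, with in particular $e\kF \cong \kO$.

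Next I would set $\GG := \Hom_\kA(\kF, -) = e(-)\colon \Coh(\XX) \to \Coh(X)$. This functor is exact (it merely picks off a direct summand), hence it extends termwise to an exact functor $\GG\colon D^b(\Coh(\XX)) \to D^b(\Coh(X))$, and by sheaf-theoretic tensor--Hom adjunction it is right adjoint to $\FF$ at the derived level. For $M^\bullet \in \Perf(X)$ I would locally replace $M^\bullet$ by a bounded complex $P^\bullet$ of finite-rank free $\kO_X$-modules; then $\FF(M^\bullet) = \kF \otimes^{\LL}_X M^\bullet$ is represented termwise by $\kF \otimes_X P^\bullet$, and
$$
\GG\bigl(\FF(M^\bullet)\bigr) = e(\kF \otimes_X P^\bullet) = (e\kF) \otimes_X P^\bullet \cong \kO \otimes_X P^\bullet \simeq M^\bullet.
$$
Chasing definitions, the unit $\eta_{M^\bullet}$, which sends $m \mapsto (f \mapsto f \otimes m)$, corresponds under this identification to the identity, so $\eta$ is a natural isomorphism.

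The main obstacle, such as it is, is purely derived-categorical bookkeeping: one must justify that $\kF \otimes^{\LL}_X M^\bullet$ is represented termwise (which is precisely why we restrict to $\Perf(X)$ rather than all of $D^b(\Coh(X))$), that $\GG$ preserves boundedness (which holds because $\kF$ is $\kA$-projective, making $\GG$ exact), and that the local identifications glue to a global isomorphism of functors (immediate from the globality of $e$). No genuinely difficult step appears; the whole argument is driven by the existence of the idempotent $e$ and the identification $\kF \cong \kA e$.
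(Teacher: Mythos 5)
Your argument is correct, and in fact the paper does not prove this proposition at all --- it only cites \cite[Theorem 2]{Tilting}; the adjunction argument you give (exhibit the exact right adjoint $\GG = \Hom_{\kA}(\kF,-) = e(-)$ and check that the unit is invertible using $\kF \cong \kA e$ and $e\kF \cong \kO$, i.e.\ essentially $\RHom_{\kA}(\kF,\kF)\cong\kO$) is the standard proof of such statements and is what the cited reference amounts to. Two small points to tighten. First, the strictification of a perfect complex by a bounded complex of finite-rank locally free sheaves should be done \emph{globally}, not just locally, so that the termwise computation of $\GG\FF$ makes sense; this is available here because of the standing assumption that every coherent sheaf on $X$ is a quotient of a locally free sheaf. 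Second, your justification of $\kA e \xrightarrow{\sim} \kF$ is slightly garbled: a left $\kA$-linear map $\kA e \to \kN$ is determined by the image of $e$, but that image must lie in $e\kN$ (not be arbitrary in $\kN$), which gives $\Hom_{\kA}(\kA e,\kN)\cong e\kN$; the isomorphism $\kA e \cong \kF$ itself is cleanest read off from the matrix decomposition of $\kA = {\mathcal End}_X(\kO\oplus\kF')$, whose first column is $\kA e \cong {\mathcal H}om_X(\kO,\kF)\cong\kF$, with evaluation at $1$ realizing the identification. Neither point affects the validity of the proof.
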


\noindent
Let $\cP(X)$ be the essential image of $\Perf(X)$ under $\FF$.
This category can be characterized in the following intrinsic way, see for instance
\cite[Proposition 2]{Tilting}\footnote{Although the quoted results
  were stated  in \cite{Tilting} in a weaker form, their  proofs  can be generalized literally to our case.}
$$
\Ob\bigl(\cP(X)\bigr) = \Bigl\{\kH^\bullet \in
\Ob\Bigl(D^b\bigl(\Coh(\XX)\bigr)\Bigr) \, \Big| \, \kH_x^\bullet \in
\mathrm{Im}\Bigl(\Hot^b\bigl(\add(\kF_x)\bigr) \lar  D^b\bigl(\kA_x-\mod\bigr)\Bigr)\Bigr\}.
$$
\begin{definition}\label{D:DefCatSing} In the above notations, the \emph{relative singularity category}
$\Delta_{X}(\XX)$ is the idempotent completion of the Verdier quotient $D^b\bigl(\Coh(\XX)\bigr)/\cP(X)$.
Recall that according to  Balmer and Schlichting \cite{BalmerSchlichting},
$\Delta_{X}(\XX)$ has a natural structure of  a triangulated category.
\end{definition}

\begin{remark}
In case $X$ is an affine scheme,   triangulated categories of the form  $\Delta_{X}(\XX)$
were also considered by Chen \cite{Chen2},
 Thanhoffer de V\"olcsey and Van den Bergh \cite{ThanhofferVandenBergh}.
\end{remark}

\noindent
The following result seems to be well-known to experts. However, we were not able to find
a reference in the literature and therefore give a proof here.

\begin{lemma}\label{P:subtleProp}
Let $A$ be a ring and $O$ be its center. Assume that $O$ is Noetherian of Krull dimension $d$ and
$A$  is finitely generated as a left $O$--module. For any $0 \le e \le d$ let
$A-\mod^e$ be the full subcategory of left Noetherian $A$--modules $A-\mod$, whose support over $O$ is at most $e$--dimensional and $D^b_e(A-\mod)$ be the full subcategory of $D^b(A-\mod)$ consisting of complexes whose cohomology belongs to $A-\mod^e$.  Then the canonical functor
$$
D^b\left(A-\mod^e\right) \lar D^b_e(A-\mod)
$$
is an equivalence of triangulated categories.\footnote{We would like to thank B.~Keller and M.~Van den Bergh for an enlightening discussion on this subject.}
\end{lemma}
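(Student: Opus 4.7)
The plan is to verify that the canonical functor $\Phi\colon D^b\bigl(A-\mod^e\bigr) \to D^b_e\bigl(A-\mod\bigr)$ is both essentially surjective and fully faithful, handling the two properties jointly via an induction on cohomological amplitude. The first step is to observe that $A-\mod^e$ is a Serre subcategory of $A-\mod$: since $A$ is finite over the Noetherian center $O$, it is itself left Noetherian, so submodules and quotients of finitely generated modules remain finitely generated; and the support dimension of a subobject, quotient, or extension is controlled by the corresponding dimensions of the terms. Hence $\Phi$ is well-defined.

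For \emph{essential surjectivity}, given $X^\bullet \in D^b_e(A-\mod)$ I would induct on its cohomological amplitude. If only one cohomology $H^n(X^\bullet)$ is non-zero, then $X^\bullet \cong H^n(X^\bullet)[-n]$ in $D^b(A-\mod)$ and the claim is immediate, since $H^n(X^\bullet) \in A-\mod^e$. For the inductive step, let $n$ be the largest index with $H^n(X^\bullet) \neq 0$ and use the truncation triangle
$$
\tau^{\le n-1} X^\bullet \lar X^\bullet \lar H^n(X^\bullet)[-n] \lar \tau^{\le n-1} X^\bullet [1],
$$
which decomposes $X^\bullet$ into pieces of smaller amplitude. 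By induction, $\tau^{\le n-1} X^\bullet$ is quasi-isomorphic to $\Phi(Y^\bullet)$ for some $Y^\bullet \in D^b(A-\mod^e)$, and fully faithfulness (established next) lifts the connecting morphism $H^n(X^\bullet)[-n] \to \Phi(Y^\bullet)[1]$ to $D^b(A-\mod^e)$; taking the mapping cone in $D^b(A-\mod^e)$ provides the desired preimage.

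For \emph{fully faithfulness}, it suffices to prove $\Ext^k_{A-\mod^e}(M, N) \cong \Ext^k_{A-\mod}(M, N)$ for $M, N \in A-\mod^e$ and all $k \ge 0$. The cases $k=0$ and $k=1$ are immediate from the Serre property. For $k \ge 2$, I would use the fact that, because $A$ is module-finite over the Noetherian center $O$, there is a central ideal $I \subset O$ with $\dim V(I) \le e$ — for instance $I = \rad\bigl(\Ann_O(M \oplus N)\bigr)$ — such that some power $I^r$ annihilates both $M$ and $N$. One then shows that every Yoneda $k$-extension between $M$ and $N$ in $A-\mod$ is equivalent to one whose middle terms all lie in $A-\mod^e$, by replacing each middle term with its $I^r$-torsion subobject; these are finitely generated (as $A$ is Noetherian) and killed by $I^r$, hence in $A-\mod^e$.

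The main obstacle is precisely this Yoneda-modification step: since the central torsion functor $\Gamma_{I^r}$ is only left exact, a careful diagram chase is needed to verify that the modified sequence remains exact at every intermediate term and represents the same Yoneda class as the original. The centrality of $I$ in $O$ is crucial: it ensures that $\Gamma_{I^r}$ commutes with the left $A$-action and reduces the entire argument to essentially commutative manipulations with the $I^r$-torsion filtration, where Noetherianness of $A$ over $O$ keeps all relevant modules finitely generated throughout the construction.
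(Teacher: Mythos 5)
Your overall architecture (essential surjectivity by induction on amplitude plus bijectivity of $\Ext^k$ between the two abelian categories) is a legitimate strategy, and your observations that $A-\mod^e$ is a Serre subcategory and that $M\oplus N$ is killed by a power of a central ideal $I$ with $\dim V(I)\le e$ are correct. However, the step you yourself flag as the main obstacle is a genuine gap, not a routine diagram chase: replacing each middle term of a Yoneda $k$-extension by its $I^r$- (or $I^\infty$-) torsion subobject does \emph{not} in general produce an exact sequence. The problem is exactness at the $M$-end: for $0 \to N \to E_k \to \cdots \to E_1 \xrightarrow{\pi} M \to 0$, a lift $e_1\in E_1$ of $m\in M$ satisfies $I^r e_1\subseteq \ker(\pi)$, but $\ker(\pi)$ need not be $I$-power torsion when $k\ge 2$, so $e_1$ need not lie in $\Gamma_{I^\infty}(E_1)$ and $\Gamma_{I^\infty}(E_1)\to M$ can fail to be surjective. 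A concrete instance (with $A=O=\kk\llbracket t\rrbracket$, $e=0$, $M=N=\kk$) is $0\to \kk\to \kk\llbracket t\rrbracket\xrightarrow{\cdot t}\kk\llbracket t\rrbracket\to \kk\to 0$: all torsion submodules of the middle terms vanish. So the construction must be replaced, not merely checked more carefully; in addition, even if surjectivity of $\Ext^k_{A-\mod^e}(M,N)\to\Ext^k_{A-\mod}(M,N)$ were secured this way, you would still owe an argument for injectivity, since a Yoneda equivalence in $A-\mod$ may pass through extensions whose middle terms leave $A-\mod^e$.

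The paper avoids all of this by invoking a single criterion (\cite[Proposition 1.7.11]{KashiwaraSchapira}), which reduces the whole lemma — full faithfulness and essential surjectivity at once — to the following statement: for every monomorphism $\phi\colon M\to N$ with $M\in A-\mod^e$ and $N\in A-\mod$, there is a $\psi\colon N\to K$ with $K\in A-\mod^e$ and $\psi\phi$ injective. This is verified by taking an injective envelope $E=E(M)$ in $A-\Mod$, extending $M\to E$ along $\phi$ to $\alpha\colon N\to E$, and setting $K=\Im(\alpha)$; the key point, as in \cite[Lemma 3.2.5]{BrunsHerzog}, is that injective envelopes localize over the Noetherian center, so $\Supp_O(K)\subseteq\Supp_O(M)$. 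I would recommend you either adopt this route or find a genuinely different way to move a Yoneda class into $A-\mod^e$ (term-by-term torsion truncation will not do it).
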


\begin{proof}
By  \cite[Proposition 1.7.11]{KashiwaraSchapira},
it is sufficient to show the following

\vspace{1mm}
\noindent
\underline{Statement}.
Let $M$ be an arbitrary object of $A-\mod^e$, $N$ an arbitrary object of $A-\mod$ and $
\phi\colon M \rightarrow N$  an arbitrary injective $A$--linear map. Then there exists an object
$K$ of $A-\mod^e$ and a morphism $\psi\colon N \rightarrow K$ such that $\psi \phi$ is injective.

\vspace{1mm}
\noindent
Indeed, let $E = E(M)$ be an injective envelope of $M$ and $\theta\colon M \rightarrow E$ the
corresponding embedding. Then there exists a morphism $\alpha\colon N \rightarrow E$ such that
$\alpha \phi = \theta$. Note that $K := \mathsf{Im}(\alpha)$ is a left Noetherian $A$--module.
As in \cite[Lemma 3.2.5]{BrunsHerzog} one can  show that for any $\mathfrak{p} \in \Spec(O)$ we
have: $E_\mathfrak{p} \cong E(M_\mathfrak{p})$. Hence, $K_{\mathfrak{p}}=0$ for all $\mathfrak{p} \notin \Supp(M)$. This implies that $\krdim\bigl(\Supp(K)\bigr) \le e$.
\end{proof}

\begin{remark}
 Lemma \ref{P:subtleProp} is no longer true if  $A$ is assumed to be just left Noetherian. For example, let $\mathfrak{g}$
be a finite dimensional simple Lie algebra  over $\mathbb{C}$ and $U = U(\mathfrak{g})$ its universal enveloping algebra. Then $U$ is left Noetherian, see for instance \cite[Section I.7]{McConnellRobson}.
 By Weyl's complete reducibility theorem, the category $U-\mod^0$ of finite dimensional
left $U$--modules is semi-simple. However, higher extensions between finite dimensional modules
do not necessarily vanish in $U-\mod$, see for instance \cite{Humphreys}.
 In particular, the triangulated categories
$D^b(U-\mod^0)$ and $D^b_0(U-\mod)$ are \emph{not} equivalent.
\end{remark}

\noindent
Globalizing the proof of Lemma  \ref{P:subtleProp}, we get the following result.

\begin{lemma}\label{P:DercCatandSupp}
Let $\Coh_Z(\XX)$ be the category of coherent left $\kA$--modules, whose  support belongs to  $Z$
and $D^b_Z\bigl(\Coh(\XX)\bigr)$ be the full subcategory of $D^b\bigl(\Coh(\XX)\bigr)$ consisting
of complexes whose cohomology is supported at $Z$. Then the canonical functor
$$
D^b\bigl(\Coh_Z(\XX)\bigr) \lar D^b_Z\bigl(\Coh(\XX)\bigr)
$$
is an equivalence of triangulated categories.
\end{lemma}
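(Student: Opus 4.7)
The plan is to globalize the strategy of the proof of Lemma \ref{P:subtleProp} verbatim. By \cite[Proposition 1.7.11]{KashiwaraSchapira}, applied to the Serre subcategory $\Coh_Z(\XX) \subseteq \Coh(\XX)$ (which is evidently closed under subobjects, quotients and extensions), it is enough to verify the following extension property: for every monomorphism $\phi \colon \kM \hookrightarrow \kN$ in $\Coh(\XX)$ with $\kM \in \Coh_Z(\XX)$, there exist an object $\kK \in \Coh_Z(\XX)$ and a morphism $\psi \colon \kN \to \kK$ such that $\psi \phi$ is a monomorphism.

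The construction of $\kK$ proceeds by the same injective-envelope argument used in the affine case. The category $\QCoh(\XX)$ is a Grothendieck abelian category (since $X$ is separated Noetherian and $\kA$ is a coherent sheaf of $\kO_X$-algebras), so injective envelopes exist in $\QCoh(\XX)$. Choose one, $\theta \colon \kM \hookrightarrow \kE$, and extend $\theta$ along $\phi$ by injectivity of $\kE$ to a morphism $\alpha \colon \kN \to \kE$ with $\alpha \phi = \theta$. Set $\kK := \im(\alpha) \subseteq \kE$, and let $\psi \colon \kN \twoheadrightarrow \kK$ be the induced epimorphism. Since $\kN$ is coherent, so is its quotient $\kK$, and $\psi \phi = \theta$ is a monomorphism by construction.

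The delicate step, and the main obstacle to a completely formal globalization, is verifying that $\Supp(\kK) \subseteq Z$. For this one shows that the stalk $\kE_x$ is an injective envelope of the $\kA_x$-module $\kM_x$ for every $x \in X$; this is the direct analogue of \cite[Lemma 3.2.5]{BrunsHerzog} invoked in the proof of Lemma \ref{P:subtleProp}, and its proof carries over since localization at $x$ is exact and $\kA_x$ is a Noetherian ring that is finitely generated as a module over $\kO_{X,x}$. Granting this, for any $x \in X \setminus Z$ we have $\kM_x = 0$, hence $\kE_x = 0$, and therefore $\kK_x \subseteq \kE_x$ vanishes. This gives $\Supp(\kK) \subseteq Z$, verifies the extension property, and completes the proof.
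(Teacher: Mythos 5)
Your proposal is correct and is exactly what the paper intends: the paper gives no separate proof for this lemma, stating only that it follows by ``globalizing the proof of Lemma \ref{P:subtleProp}'', and your argument carries out that globalization faithfully --- the Kashiwara--Schapira criterion, the injective envelope taken in the Grothendieck category $\QCoh(\XX)$ with $\kK=\im(\alpha)$ coherent as a quotient of $\kN$, and the stalkwise identification $\kE_x\cong E(\kM_x)$ playing the role of \cite[Lemma 3.2.5]{BrunsHerzog}. Nothing further is needed.
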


\noindent
Our next goal is to prove that the category $\Delta_{X}(\XX)$ depends only on an open neighborhood
of the singular locus $Z$.

\begin{proposition}\label{P:FullyFaithf}
Let $D^b_Z\bigl(\Coh(\XX)\bigr)$ be the full subcategory of $D^b\bigl(\Coh(\XX)\bigr)$
consisting of complexes whose cohomology is supported in $Z$ and
$\cP_Z(X) = \cP(X) \cap  D^b_Z\bigl(\Coh(\XX)\bigr)$. Then the canonical functor
$$
\HH\colon \quad
\frac{D^b_Z\bigl(\Coh(\XX)\bigr)}{\cP_Z(X)} \lar
\frac{D^b\bigl(\Coh(\XX)\bigr)}{\cP(X)}
$$
is fully faithful.
\end{proposition}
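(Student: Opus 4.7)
The plan is to apply the standard Verdier criterion for fully faithfulness of the induced functor on Verdier quotients. Concretely, in the abstract setting where $\mathcal{T}$ is a triangulated category, $\mathcal{N}$ a thick subcategory, $\mathcal{U}$ a full triangulated subcategory and $\mathcal{N}_{\mathcal{U}} := \mathcal{U} \cap \mathcal{N}$, the canonical functor $\mathcal{U}/\mathcal{N}_{\mathcal{U}} \to \mathcal{T}/\mathcal{N}$ is fully faithful provided that every morphism $s\colon T \to U$ in $\mathcal{T}$ with $U \in \mathcal{U}$ and $\mathsf{Cone}(s) \in \mathcal{N}$ admits a morphism $\beta\colon U' \to T$ with $U' \in \mathcal{U}$ and $\mathsf{Cone}(s\beta) \in \mathcal{N}_{\mathcal{U}}$. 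Specializing to $\mathcal{T} = D^b\bigl(\Coh(\XX)\bigr)$, $\mathcal{N} = \cP(X)$, and $\mathcal{U} = D^b_Z\bigl(\Coh(\XX)\bigr)$, it suffices to construct, for any $s\colon D \to A$ as in the hypotheses, a morphism $t\colon D' \to D$ with $D' \in D^b_Z\bigl(\Coh(\XX)\bigr)$ and $\mathsf{Cone}(s \circ t) \in \cP_Z(X)$.

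Completing $s$ to a distinguished triangle $D \xrightarrow{s} A \xrightarrow{\phi} N \to D[1]$ with $N \in \cP(X)$, the problem reduces, via the octahedral axiom, to the following factorization statement: \emph{every morphism $\phi\colon A \to N$ with $A \in D^b_Z\bigl(\Coh(\XX)\bigr)$ and $N \in \cP(X)$ admits a factorization $\phi = \eta \circ \psi$ with $\psi\colon A \to N'$, $\eta\colon N' \to N$ and $N' \in \cP_Z(X)$}. Granting this, complete $\psi$ to a triangle $D' \to A \xrightarrow{\psi} N' \to D'[1]$; since $A, N' \in D^b_Z\bigl(\Coh(\XX)\bigr)$, so is $D'$. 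Applying the octahedral axiom to the composition $\phi = \eta \psi$, relating the triangles associated to $\psi$, $\eta$, and $\phi$, yields a morphism $t\colon D' \to D$ such that $s \circ t$ coincides with the structural map $D' \to A$ of the new triangle, whose cone is $N' \in \cP_Z(X)$, as required.

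To establish the factorization statement, which is the technical heart of the proof, I would proceed as follows. By Lemma~\ref{P:DercCatandSupp}, represent $A$ by a bounded complex of coherent $\kA$-modules supported on $Z$. By Proposition~\ref{P:startingpoint}, together with the assumption that every coherent sheaf on $X$ is a quotient of a locally free one, write $N = \FF(P)$ for a bounded complex $P$ of finite rank locally free $\kO_X$-modules, so that $N$ becomes a bounded complex of locally projective $\kA$-modules and $\phi$ lifts to an honest chain map. The termwise image of $\phi$ forms a coherent subcomplex of $N$ supported on $Z$ through which $\phi$ factors set-theoretically, but this subcomplex is typically not perfect. The main obstacle is therefore to ``cover'' it by a genuine perfect complex $N' \in \cP_Z(X)$ equipped with a morphism $N' \to N$ that receives $\phi$. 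I expect to construct such an $N'$ locally from Koszul-type perfect complexes built on generators of the ideal sheaf $\kI_Z$ and then to patch globally using the excellence hypothesis on $X$; verifying that the resulting $N'$ genuinely admits a compatible pair of morphisms $A \to N' \to N$ composing to $\phi$ will be the most delicate point of the argument.
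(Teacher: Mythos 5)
Your reduction is sound as far as it goes: the criterion you invoke is the dual form of the one the paper takes from Kashiwara--Schapira, and the octahedron argument correctly reduces fully faithfulness to your factorization statement. The problem is that the factorization statement --- which is the entire content of the proposition --- is never proved. You describe the construction of $N'$ only as something you ``expect'' to carry out, and you explicitly flag the verification that $\phi$ factors as $A \to N' \to N$ as the unfinished ``most delicate point''. The termwise image of a chain map representing $\phi$ is a coherent subcomplex supported on $Z$ but is in no way perfect, and ``patch globally using excellence'' is not an argument; so nothing in the proposal actually produces an object of $\cP_Z(X)$ through which $\phi$ factors.

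Moreover, you have chosen the harder of the two dual conditions. The paper factors morphisms $\varphi\colon \kP^\bullet \to \kC^\bullet$ going \emph{from} $\cP(X)$ \emph{into} $D^b_Z\bigl(\Coh(\XX)\bigr)$: since $\kC^\bullet$ is annihilated by $\kI^t$, the adjunction $(\eta^*,\eta_*)$ for $\eta\colon (Z,\kA/\kI^t)\to\XX$ reduces everything to factoring the unit $\kP^\bullet \to \kA/\kI^t\stackrel{\LL}\otimes_\kA\kP^\bullet$, and writing $\kP^\bullet=\kF\otimes_X\kR^\bullet$ this is done explicitly through $\kF\stackrel{\LL}\otimes_X\bigl(\kK^\bullet\stackrel{\LL}\otimes_X\kR^\bullet\bigr)$, where $\kK^\bullet$ is a Koszul-type complex with $\kK^0=\kO$, $\kH^0(\kK^\bullet)\cong\kO/\kI^t$ and all other cohomology supported on $Z$. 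In your direction the relevant adjoint is $R{\mathcal Hom}_\kA(\kA/\kI^t,-)$, which does not preserve $D^b$, and the naive dual of the Koszul trick fails: the cone of $\kK^{\bullet\vee}\otimes_X\kR^\bullet\to\kR^\bullet$ is \emph{not} supported on $Z$, so morphisms from $A$ into the corresponding piece of the triangle need not vanish. You should either supply a genuine proof of the factorization in your chosen direction or switch to the other one, where the adjunction-plus-Koszul mechanism closes the argument.
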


\begin{proof} Our approach is inspired by a recent paper of Orlov \cite{Orlov}.  By \cite[Proposition 1.6.10]{KashiwaraSchapira}, it is sufficient to show that for any $\kP^\bullet \in \Ob\bigl(\cP(X)\bigr)$,
$\kC^\bullet \in \Ob\bigl(D^b_Z\bigl(\Coh(\XX)\bigr)\bigr)$ and $\varphi \colon \kP^\bullet \rightarrow \kC^\bullet$ there
exists $\kQ^\bullet \in \Ob\bigl(\cP_Z(X)\bigr)$ and a factorization
$$
\begin{xy}\SelectTips{cm}{}
\xymatrix{
\kP^\bullet \ar[rr]^{\varphi} \ar[dr]_{\varphi'} & & \kC^\bullet \\
& \kQ^\bullet \ar[ur]_{\varphi''}&
}
\end{xy}
$$
By Lemma \ref{P:DercCatandSupp}, we know that the functor $D^b_Z\bigl(\Coh(\XX)\bigr)
\rightarrow D^b\bigl(\Coh_Z(\XX)\bigr)$ is an equivalence of categories. Hence, we may
without loss of generality assume that $\kC^\bullet$ is a bounded complex of objects of $\Coh_Z(\XX)$.
Let $\kI=\kI_{Z}$ be the ideal sheaf of  $Z$. Then there exists $t \ge 1$ such that
$\kI^t$ annihilates every term of  $\kC^\bullet$. Consider the ringed space
$\YY = \bigl(Z, \kA/\kI^t\bigr)$. Then we have a morphism of ringed spaces $\eta \colon \YY \rightarrow \XX$ and
an adjoint pair
$$
\left\{
\begin{array}{ll}
\eta_* = \mathsf{forgetf}\colon             & D^-\bigl(\Coh(\YY)\bigr) \rightarrow D^-\bigl(\Coh(\XX)\bigr) \\
\eta^* = \kA/\kI^t \stackrel{\LL}\otimes_\kA \,-\, \colon & D^-\bigl(\Coh(\XX)\bigr) \rightarrow D^-\bigl(\Coh(\YY)\bigr).
\end{array}
\right.
$$
Next, there exists $\kE^\bullet \in \Ob\bigl(D^b\bigl(\Coh(\YY)\bigr)\bigr)$ such that
$\kC^\bullet = \eta_*(\kE^\bullet)$. Moreover, we have an isomorphism
$
\gamma \colon \Hom_{\YY}\bigl(\eta^*\kP^\bullet, \kE^\bullet\bigr) \lar
\Hom_{\XX}\bigl(\kP^\bullet, \eta_*(\kE^\bullet)\bigr)
$
such that for $\psi \in \Hom_{\YY}\bigl(\eta^*\kP^\bullet, \kE^\bullet\bigr)$ the corresponding
morphism $\varphi = \gamma(\psi)$ fits into the commutative diagram
$$
\begin{xy}\SelectTips{cm}{}
\xymatrix{
\kP^\bullet \ar[rr]^{\xi_{\kP^\bullet}} \ar[rd]_{\varphi} & & \eta_* \eta^* \kP^\bullet  \ar[ld]^{\eta_*(\psi)} \\
& \eta_* \kE^\bullet
}
\end{xy}
$$
where $\xi \colon \mathbbm{1}_{D^-(\XX)} \rightarrow \eta_* \eta^*$ is the unit of adjunction.
Thus,  it is sufficient to find a factorization of the morphism $\xi_{\kP^\bullet}$ through
an object of $\cP_Z(X)$.

By definition of $\cP(X)$, there exists a bounded complex of locally free
$\kO_X$--modules $\kR^\bullet$ such that the complexes $\kP^\bullet$ and $\kF \otimes_X \kR^\bullet$ are isomorphic in
$D^b\bigl(\Coh(\XX)\bigr)$. Note
that we have the following commutative diagram in the category $\mathsf{Com}^b(\XX)$
of bounded complexes of coherent left $\kA$--modules:
$$
\begin{xy}\SelectTips{cm}{}
\xymatrix
{
\kF \otimes_X \kR^\bullet \ar[rr]^{\mathbbm{1} \otimes \theta_{\kR^\bullet}} \ar[rd]_{\zeta_{\kR^\bullet}} & & \kF \otimes_X \bigl(\kO/\kI^t \otimes_X \kR^\bullet\bigr) \ar[ld]^{\cong} \\
& \kA/\kI^t \otimes_\kA \bigl(\kF \otimes_X \kR^\bullet\bigr) &
}
\end{xy}
$$
where $\zeta_{\kR^\bullet} = \xi_{\kP^\bullet}$ in $D^-\bigl(\Coh(\XX)\bigr)$ and
$\theta_{\kR^\bullet} \colon \kR^\bullet \rightarrow \kO/\kI^t \otimes_X \kR^\bullet$ is the canonical map.
Since any coherent sheaf on $X$ is a quotient of a locally free sheaf,
there exists a bounded complex $\kK^\bullet$
of locally free $\kO_X$--modules (Koszul complex of $\kI^t$)
$$
\kK^\bullet = \bigl(
0 \lar \kK^m \lar \dots \lar \kK^1 \lar \kK^0 \lar 0
\bigr)
$$
such that
\begin{itemize}
\item $\kK^0 = \kO$ and  $\kH^0(\kK^\bullet) \cong  \kO/\kI^t$,
\item $\kH^{-i}(\kK^\bullet)$ are supported at $Z$ for all $1 \le i \le m$.
\end{itemize}
Hence, we have a factorization of the canonical morphism
$\kO \rightarrow \kO/\kI^t$ in the category of complexes  $\mathsf{Com}^b\bigl(\Coh(X)\bigr)$:
$\kO[0] \rightarrow \kK^\bullet \rightarrow \kO/\kI^t[0]$, which induces  a factorization
$$
\kR^\bullet \lar \kK^\bullet \otimes_X \kR^\bullet \lar \kO/\kI^t \otimes_X \kR^\bullet
$$
of the canonical map $\theta_{\kR^\bullet}$. Note that the complex
$\kK^\bullet \otimes_X \kR^\bullet$ is perfect and its cohomology is supported at $Z$. Hence, we get
a factorization of the (derived) adjunction unit  $\xi_{\kP^\bullet}$
$$
\kP^\bullet \cong \kF  \stackrel{\LL}\otimes_X \kR^\bullet \lar \kQ^\bullet := \kF \stackrel{\LL}\otimes_X \bigl(\kK^\bullet \stackrel{\LL}\otimes_X \kR^\bullet\bigr)
 \lar \kA/\kI^t \stackrel{\LL}\otimes_\kA \bigl(\kF \stackrel{\LL}\otimes_X \kR^\bullet\bigr) \cong \kA/\kI^t \stackrel{\LL}\otimes_\kA \kP^\bullet
$$
 we are looking for. This concludes the proof.
\end{proof}

\begin{theorem}\label{T:keylocaliz}
In the notations of Proposition \ref{P:FullyFaithf}, the induced functor
$$
\HH^\omega\colon \quad
\left(
\frac{D^b_Z\bigl(\Coh(\XX)\bigr)}{\cP_Z(X)}
\right)^\omega
\lar
\left(
\frac{D^b\bigl(\Coh(\XX)\bigr)}{\cP(X)}
\right)^\omega
$$
is an equivalence of triangulated categories.
\end{theorem}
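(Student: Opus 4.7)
By Proposition~\ref{P:FullyFaithf}, $\HH$ is fully faithful, and since idempotent completion is a $2$-functor that preserves full faithfulness, $\HH^\omega$ is fully faithful as well. It therefore suffices to verify that every object of the target of $\HH^\omega$ is a direct summand of an object in its image; equivalently, that the Verdier quotient
$$
\kQ := \left(\frac{D^b\bigl(\Coh(\XX)\bigr)}{\cP(X)}\right) \Big/ \HH\!\left(\frac{D^b_Z\bigl(\Coh(\XX)\bigr)}{\cP_Z(X)}\right)
$$
satisfies $\kQ^\omega = 0$.

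My plan is to reduce to the regular open complement $U := X \setminus Z$ and then invoke Verdier's theorem on iterated quotients. Since $\kF|_U$ is locally free (by the standing assumption) and contains $\kO_U$ as a direct summand, it is a local progenerator of $\Coh(\XX|_U)$; hence $\kA|_U$ is Morita equivalent to $\kO_U$, giving an equivalence $\Coh(\XX|_U) \simeq \Coh(U)$. Because $U$ is regular, $D^b\bigl(\Coh(U)\bigr) = \Perf(U)$. Combining Lemma~\ref{P:DercCatandSupp} with the standard extendability of coherent sheaves from open subschemes of the Noetherian scheme $X$, restriction to $U$ induces an equivalence
$$
\frac{D^b\bigl(\Coh(\XX)\bigr)}{D^b_Z\bigl(\Coh(\XX)\bigr)} \stackrel{\sim}\lar D^b\bigl(\Coh(\XX|_U)\bigr),
$$
under which the essential image of $\cP(X)$ is dense in $\Perf(U)$ and therefore coincides with all of $\Perf(U) = D^b\bigl(\Coh(\XX|_U)\bigr)$ after idempotent completion.

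Let $\kT$ denote the thick subcategory of $D^b\bigl(\Coh(\XX)\bigr)$ generated by $D^b_Z\bigl(\Coh(\XX)\bigr) \cup \cP(X)$. The full faithfulness of $\HH$ ensures that the image of $\HH$ inside $D^b\bigl(\Coh(\XX)\bigr)/\cP(X)$ coincides with the image of $\kT$. Applying Verdier's theorem on iterated quotients together with the previous paragraph,
$$
\kQ \simeq \frac{D^b\bigl(\Coh(\XX)\bigr)}{\kT} \simeq \frac{D^b\bigl(\Coh(\XX|_U)\bigr)}{\Perf(U)} = 0,
$$
so in particular $\kQ^\omega = 0$, as desired.

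The step I expect to require the most care is the identification of the essential image of $\cP(X)$ on $U$ with $\Perf(U)$. By the Thomason--Trobaugh theorem on extension of perfect complexes, a given perfect complex on $U$ need only be a \emph{direct summand} of the restriction of a perfect complex on $X$; the resulting $K_{-1}$-type obstruction typically does not vanish before idempotent completion. This is precisely the reason the theorem is formulated in terms of $(\,\cdot\,)^\omega$, and it is why the analogous statement without idempotent completions would, in general, fail.
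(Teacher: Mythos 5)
Your argument is correct, and it rests on exactly the same pillars as the paper's proof: full faithfulness from Proposition~\ref{P:FullyFaithf}, Miyachi's localization theorem identifying $D^b\bigl(\Coh(\XX)\bigr)/D^b_Z\bigl(\Coh(\XX)\bigr)$ with $D^b\bigl(\Coh(\UU)\bigr)$ for $\UU = (U, \kA|_U)$, the Morita equivalence $\Coh(\UU)\simeq\Coh(U)$ coming from local freeness of $\kF|_U$ together with $\Perf(U)=D^b\bigl(\Coh(U)\bigr)$, and Thomason--Trobaugh. Where you genuinely differ is in how essential surjectivity is extracted: the paper, given $\kM^\bullet$, explicitly produces a complement $\widetilde{\kM}^\bullet$ and a perfect complex $\kR^\bullet$ on $X$ with $\kF\stackrel{\LL}\otimes_X\kR^\bullet\cong\kM^\bullet\oplus\widetilde{\kM}^\bullet$ modulo $D^b_Z\bigl(\Coh(\XX)\bigr)$, and then chases two distinguished triangles to exhibit $\kM^\bullet\oplus\widetilde{\kM}^\bullet$ as a cone of a morphism between objects supported on $Z$, whence it lies in the image of $\HH$ by full faithfulness; you instead package the same input into Verdier's third isomorphism theorem and conclude that the quotient $\kQ$ of the target by the image of $\HH$ vanishes. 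Both are valid, and yours is arguably the cleaner bookkeeping, since ``$\kQ=0$'' is precisely the statement that every object of $D^b\bigl(\Coh(\XX)\bigr)/\cP(X)$ is a direct summand of an object in the image of $\HH$, which (the source being idempotent complete and $\HH$ fully faithful) yields essential surjectivity of $\HH^\omega$. One imprecision worth fixing: the essential image of $\HH$ and the full subcategory of $D^b\bigl(\Coh(\XX)\bigr)/\cP(X)$ on objects of $\kT$ need not literally coincide --- the latter may contain additional direct summands --- and full faithfulness of $\HH$ is not what controls this; the correct statement is that the two subcategories have the same thick closure (objects of $\kT$ are built from $D^b_Z\bigl(\Coh(\XX)\bigr)\cup\cP(X)$ by cones, shifts and summands, and $\cP(X)$ dies in the quotient), which is all that the Verdier quotient detects, so your identification of $\kQ$ with $D^b\bigl(\Coh(\XX)\bigr)/\kT$ survives. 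Your closing remark about the necessity of idempotent completion is exactly right and matches the role of \cite[Lemma 5.5.1]{ThomasonTrobaugh} in the paper's proof.
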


\begin{proof}
Proposition \ref{P:FullyFaithf} implies that the functor $\HH^\omega$ is fully faithful. Hence,
we have to show it is essentially surjective. It suffices to prove the following

\vspace{1mm}
\noindent
\underline{Statement}. For any
$ \kM^\bullet \in \Ob\left({D^b\bigl(\Coh(\XX)\bigr)}/{\cP(X)}
\right)$ there exist  $ \widetilde\kM^\bullet \in
\Ob\left({D^b\bigl(\Coh(\XX)\bigr)}/{\cP(X)}
\right)$  and $
\kN^\bullet \in \Ob\left({D^b_Z\bigl(\Coh(\XX)\bigr)}/{\cP_Z(X)}
\right)$ such that  $\kM^\bullet \oplus \widetilde\kM^\bullet \cong \HH(\kN^\bullet).
$

\vspace{1mm}
\noindent
Note that we have the following diagram of categories and functors
$$
\begin{xy}\SelectTips{cm}{}
\xymatrix{
\frac{\displaystyle D^b\bigl(\Coh(\XX)\bigr)}{\displaystyle D^b_Z\bigl(\Coh(\XX)\bigr)}
\ar[rr]^{\AA} & &
D^b\left(\frac{\displaystyle \Coh(\XX)}{\displaystyle \Coh_Z(\XX)}\right) \ar[dd]^{\imath^*} \\
D^b\bigl(\Coh(\XX)\bigr) \ar[u]^{\PP} & & \\
\Perf(X) \ar[u]^{\kF \stackrel{\mathbbm{L}}\otimes_X \,-\,} \ar[r]^{\jmath^*} & \Perf(U)
\ar[r]^{\kF\big|_{U} \stackrel{\mathbbm{L}}\otimes_U \,-\,}  & D^b\bigl(\Coh(\mathbb{U})\bigr), \\
}
\end{xy}
$$
where both compositions $\Perf(X) \rightarrow D^b\bigl(\Coh(\mathbb{U})\bigr)$ are isomorphic.
\begin{itemize}
\item The functor $\PP$ is the canonical projection on the Verdier quotient.
\item The functor $\AA$ is the canonical equivalence of triangulated categories constructed by Miyachi \cite[Theorem 3.2]{Miyachi}.
\item For  $U = X \setminus Z$ let  $\UU$ be  the ringed space  $\bigl(U, \, \kA\big|_U)$. The functor
$\jmath^*$ is  the canonical restrictions on an open subset. The functor $\imath^*$ is an equivalence
of triangulated categories induced by a canonical  equivalence of abelian categories
$\Coh(\XX)/\Coh_Z(\XX) \rightarrow \Coh(\UU)$.
\item Since the coherent sheaf $\kF\big|_U$ is locally free, the functor
$$\kF\big|_U \stackrel{\mathbbm{L}}\otimes_U \,-\,\colon \quad
\Perf(U) = D^b\bigl(\Coh(U)\bigr) \lar  D^b\bigl(\Coh(\UU)\bigr)$$
 is an equivalence of triangulated categories induced by a Morita-type equivalence $\kF\big|_U \otimes_U \,-\,\colon
\Coh(U) \rightarrow \Coh(\UU)$.
\end{itemize}
By a result of Thomason and Trobaugh \cite[Lemma 5.5.1]{ThomasonTrobaugh}, for any $\kS^\bullet
\in \Ob\bigl(\Perf(U)\bigr)$ there exist $\widetilde\kS^\bullet \in \Ob\bigl(\Perf(U)\bigr)$
and $\kR^\bullet \in \Ob\bigl(\Perf(X)\bigr)$ such that
$
\jmath^* \kR^\bullet \cong \kS^\bullet \oplus \widetilde\kS^\bullet.
$
Using the fact that $\AA$,  $\imath^*$ and $\kF\big|_U \stackrel{\mathbbm{L}}\otimes_U \,-\,$ are
equivalences of categories, this implies that for any $\kM^\bullet \in
\Ob\bigl(D^b\bigl(\Coh(\XX)\bigr)\bigr)$ there exist $\widetilde\kM^\bullet \in  \Ob\bigl(D^b\bigl(\Coh(\XX)\bigr)\bigr)$ and $\kR^\bullet \in \Ob\bigl(\Perf(X)\bigr)$
such that $\kP^\bullet:= \kF \stackrel{\mathbbm{L}}\otimes_X \kR^\bullet$
is isomorphic to $\kM^\bullet \oplus \widetilde\kM^\bullet$ in the Verdier quotient
$D^b\bigl(\Coh(\XX)\bigr)/D^b_Z\bigl(\Coh(\XX)\bigr)$. The last statement is equivalent to the fact
that there exists  $\kT^\bullet \in  \Ob\bigl(D^b\bigl(\Coh(\XX)\bigr)\bigr)$
and a pair of distinguished triangles
$$
\kC_\xi^\bullet  \lar \kT^\bullet \stackrel{\xi}\lar  \kM^\bullet \oplus \widetilde\kM^\bullet \lar
\kC_\xi^\bullet[1] \quad
\mathrm{and}
\quad
\kC_\theta^\bullet  \lar \kT^\bullet \stackrel{\theta}\lar \kP^\bullet \lar \kC_\theta^\bullet[1]
$$
in $D^b\bigl(\Coh(\XX)\bigr)$ such that
$\kC_\xi^\bullet$ and $\kC_\theta^\bullet$   belong to the category $D^b_Z\bigl(\Coh(\XX)\bigr)$.
Since $\kC_\theta^\bullet$ and $\kT^\bullet$ are isomorphic in the Verdier quotient
$D^b\bigl(\Coh(\XX)\bigr)/\cP(X)$, we get a distinguished triangle
$$
\kC_\xi^\bullet \stackrel{\alpha}\lar \kC_\theta^\bullet \lar \kM^\bullet \oplus \widetilde\kM^\bullet \lar
\kC_\xi^\bullet[1]
$$
in  $D^b\bigl(\Coh(\XX)\bigr)/\cP(X)$.
The functor $\HH\colon \, D^b_Z\bigl(\Coh(\XX)\bigr)/\cP_Z(X) \rightarrow
D^b\bigl(\Coh(\XX)\bigr)/\cP(X)$ is fully faithful, see  Proposition \ref{P:FullyFaithf}. Hence,
 $\kM^\bullet \oplus \widetilde\kM^\bullet$
belongs to the essential image of $\HH$.
Thus, the  functor  $\HH^\omega$ is essentially surjective, what concludes the proof.
\end{proof}

\noindent
From now on, we assume $X$ has only \emph{isolated singularities} and
$Z = \Sing(X) = \bigl\{x_1, \dots, x_p \bigr\}$. For any $1 \le i \le p$ we denote
$O_i := \kO_{x_i}$, $\mathfrak{m}_i$ the maximal ideal in $O_i$, $A_i = \kA_{x_i}$
and $F_i = \kF_{x_i}$. Next, we set $\widehat{O}_i = \varprojlim O_i/ \idm_i^t O_i$ to be the $\mathfrak{m}_i$--adic completion
of $O_i$, $\widehat{A}_i:= \varprojlim A_i/ \idm_i^t A_i$ and $\widehat{F}_i:= \varprojlim F_i/ \idm_i^t F_i$. Note that $\widehat{A}_i \cong \End_{\widehat{O}_i}(\widehat{F}_i)$.
Let $A_i-\fdmod$ denote the category of finite dimensional left $A_i$--modules. In this case,
Lemma \ref{P:DercCatandSupp} yields the following statement.

\begin{lemma}\label{L:splittingFinSupp}
The canonical functor
$
\vee_{i = 1}^p D^b\bigl(A_i-\fdmod\bigr) \rightarrow D^b_Z\bigl(\Coh(\XX)\bigr)
$
is an equivalence of triangulated categories. Let $\cP_i$ be the full subcategory
of $D^b\bigl(A_i-\fdmod\bigr)$ consisting of objects admitting a bounded resolution
by objects of $\add(F_i)$. Then this functor restricts to an equivalence
$\vee_{i = 1}^p \cP_i \rightarrow \cP_Z(X)$.
\end{lemma}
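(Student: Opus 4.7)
The plan is to combine Lemma \ref{P:DercCatandSupp} with a block decomposition of the abelian category $\Coh_Z(\XX)$ indexed by the singular points, and then read off both asserted equivalences from this decomposition.

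First I would apply Lemma \ref{P:DercCatandSupp} to identify $D^b_Z\bigl(\Coh(\XX)\bigr)$ with $D^b\bigl(\Coh_Z(\XX)\bigr)$, so it suffices to produce an equivalence of triangulated categories
$$
\disunion_{i=1}^p D^b\bigl(A_i-\fdmod\bigr) \stackrel{\sim}\lar D^b\bigl(\Coh_Z(\XX)\bigr).
$$
For this I would first establish the decomposition of abelian categories
$$
\Coh_Z(\XX) \cong \prod_{i=1}^p \Coh_{\{x_i\}}(\XX),
$$
where $\Coh_{\{x_i\}}(\XX)$ denotes the category of coherent $\kA$--modules supported at the single point $x_i$. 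Since the singularities are isolated and finite in number, one can pick pairwise disjoint open neighbourhoods $U_1, \dots, U_p$ of $x_1, \dots, x_p$; any coherent sheaf $\kM$ with $\Supp(\kM) \subseteq Z$ then splits canonically as $\kM = \bigoplus_{i=1}^p \kM_i$, where $\kM_i$ is the extension by zero of $\kM|_{U_i}$, and this splitting is functorial in $\kM$.

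Next I would identify each factor $\Coh_{\{x_i\}}(\XX)$ with $A_i-\fdmod$ via the stalk functor $\kM \mapsto \kM_{x_i}$. Any such $\kM$ is annihilated by some power $\mathfrak{m}_i^t$ of the maximal ideal of $O_i$, so $\kM_{x_i}$ is a finitely generated module over $A_i/\mathfrak{m}_i^t A_i$. Because $A_i$ is finite over $O_i$ and $O_i/\mathfrak{m}_i^t$ is an Artin local $\kk$--algebra with residue field $\kk$, this module is finite dimensional over $\kk$. The inverse functor sends a finite dimensional $A_i$--module to the associated skyscraper coherent $\kA$--module at $x_i$. Passing to bounded derived categories then yields the first equivalence of the lemma.

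For the restriction to the $\cP$--subcategories, I would invoke the intrinsic stalk description of $\cP(X)$ recalled after Proposition \ref{P:startingpoint}: an object of $D^b\bigl(\Coh(\XX)\bigr)$ lies in $\cP(X)$ exactly when each of its stalks lies in the essential image of $\Hot^b\bigl(\add(\kF_x)\bigr) \to D^b\bigl(\kA_x-\mod\bigr)$. For objects supported on $Z$ the stalks at points outside $Z$ vanish, so the condition reduces to the stalks at the singular points; under the equivalence just constructed this is precisely the defining condition for the $i$--th component to belong to $\cP_i$. Hence the equivalence restricts to $\disunion_{i=1}^p \cP_i \stackrel{\sim}\lar \cP_Z(X)$.

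The main obstacle, as I see it, is the first step: one has to verify carefully that a coherent $\kA$--module whose support lies in the finite discrete set $Z$ really does split as a direct sum of sheaves supported at the individual singular points, and that this splitting is natural. Once that block decomposition of $\Coh_Z(\XX)$ is in hand, everything else is formal --- the derived equivalence follows by applying $D^b$, and the $\cP$--version is a direct translation of the stalk-wise definition of $\cP(X)$.
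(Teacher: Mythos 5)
Your proof is correct and follows exactly the route the paper intends: the paper states this lemma as an immediate consequence of Lemma \ref{P:DercCatandSupp}, leaving implicit precisely the block decomposition of $\Coh_Z(\XX)$ over the isolated points of $Z$, the identification of each block with $A_i-\fdmod$ via the stalk functor, and the stalk-wise reading of the intrinsic description of $\cP(X)$ — all of which you supply. No discrepancies to report.
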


\noindent
Our next aim is to  show that the Verdier quotient $D^b\bigl(A_i-\fdmod\bigr)/\cP_i$ does not change
under passing to the completion.

\begin{lemma}
Let $\Perf_{\mathsf{fd}}(O_i)$ (respectively $\Perf_{\mathsf{fd}}(\widehat{O}_i)$) be the full
subcategory of $D^b\bigl(O_i-\fdmod)$ (respectively $D^b\bigl(\widehat{O}_i-\fdmod)$) consisting
of those complexes which are quasi-isomorphic to a bounded complex of finite rank free
${O}_i$--~(respectively $\widehat{O}_i$)--modules.
Let $\Perf_{\mathsf{fd}}(O_i) \rightarrow \Perf_{\mathsf{fd}}(\widehat{O}_i)$ and
$D^b\bigl(A_i-\fdmod\bigr) \rightarrow D^b\bigl(\widehat{A}_i-\fdmod\bigr)$ be the exact functors
induced by taking the completion. Then they are both equivalences of categories. Moreover,
we have a diagram of categories and functors
$$
\begin{xy}\SelectTips{cm}{}
\xymatrix{
\Perf_{\mathsf{fd}}(O_i) \ar[rr] \ar[d]_{F_i \stackrel{\LL}\otimes_{O_i} \,-\,} & & \Perf_{\mathsf{fd}}(\widehat{O}_i)
\ar[d]^{\widehat{F}_i \stackrel{\LL}\otimes_{\widehat{O}_i} \,-\,} \\
D^b\bigl(A_i-\fdmod\bigr) \ar[rr] & & D^b\bigl(\widehat{A}_i-\fdmod\bigr),
}
\end{xy}
$$
where both compositions $\Perf_{\mathsf{fd}}(O_i) \rightarrow D^b\bigl(\widehat{A}_i-\fdmod\bigr)$
are isomorphic.
\end{lemma}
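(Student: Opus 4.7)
The plan is to reduce everything to the elementary observation that finite-dimensional modules over $O_i$ (respectively $A_i$) and over $\widehat{O}_i$ (respectively $\widehat{A}_i$) coincide via the completion functor, essentially because finite-dimensional modules are automatically torsion with respect to the maximal ideal.

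First, I would verify the equivalences of abelian categories $O_i-\fdmod \stackrel{\sim}\lar \widehat{O}_i-\fdmod$ and $A_i-\fdmod \stackrel{\sim}\lar \widehat{A}_i-\fdmod$. Since $O_i$ is Noetherian local and $A_i$ is finitely generated as an $O_i$-module, any object $M$ of $A_i-\fdmod$ is annihilated by some power $\idm_i^t$. Combined with the canonical identifications $\widehat{O}_i/\idm_i^t \widehat{O}_i \cong O_i/\idm_i^t$ and $\widehat{A}_i/\idm_i^t \widehat{A}_i \cong A_i/\idm_i^t A_i$, this shows that completion restricts to an equivalence on the finite-dimensional subcategories, with quasi-inverse given by restriction along $O_i \to \widehat{O}_i$ (resp.\ $A_i \to \widehat{A}_i$). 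Passing to bounded derived categories yields the desired equivalence $D^b\bigl(A_i-\fdmod\bigr) \stackrel{\sim}\lar D^b\bigl(\widehat{A}_i-\fdmod\bigr)$.

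Next, I would promote this to the perfect subcategories. The forward direction is clear: since $O_i \to \widehat{O}_i$ is flat, completion sends a bounded complex of finite rank free $O_i$-modules to a bounded complex of finite rank free $\widehat{O}_i$-modules, so the functor $\Perf_{\mathsf{fd}}(O_i) \to \Perf_{\mathsf{fd}}(\widehat{O}_i)$ is well-defined. For essential surjectivity, the argument uses that for bounded complexes with finitely generated cohomology over the Noetherian local ring $O_i$ perfectness is equivalent to finite Tor-dimension, a property which descends along the faithfully flat ring extension $O_i \to \widehat{O}_i$. Combined with the first step, one obtains the equivalence $\Perf_{\mathsf{fd}}(O_i) \stackrel{\sim}\lar \Perf_{\mathsf{fd}}(\widehat{O}_i)$.

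Finally, commutativity of the square rests on the natural isomorphism $\widehat{F}_i \cong F_i \otimes_{O_i} \widehat{O}_i$, which holds because $F_i$ is finitely generated over the Noetherian ring $O_i$. Combined with flatness of $\widehat{O}_i$ over $O_i$, this yields, for any $P^\bullet \in \Ob\bigl(\Perf_{\mathsf{fd}}(O_i)\bigr)$, the chain of natural isomorphisms
\begin{equation*}
\widehat{F}_i \stackrel{\LL}\otimes_{\widehat{O}_i} \bigl(P^\bullet \otimes_{O_i} \widehat{O}_i\bigr) \cong \bigl(F_i \stackrel{\LL}\otimes_{O_i} P^\bullet\bigr) \otimes_{O_i} \widehat{O}_i,
\end{equation*}
and because $F_i \stackrel{\LL}\otimes_{O_i} P^\bullet$ has finite-dimensional cohomology, the final tensor product with $\widehat{O}_i$ acts as the identity, proving that both compositions are isomorphic. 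The main technical obstacle will be the descent step identifying $\Perf_{\mathsf{fd}}(O_i)$ with $\Perf_{\mathsf{fd}}(\widehat{O}_i)$: one must argue that a bounded complex of finite-dimensional $O_i$-modules which becomes perfect after completion is already perfect over $O_i$, which requires faithfully flat descent of finite Tor-dimension.
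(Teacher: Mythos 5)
Your proposal is correct and follows essentially the same route as the paper: establish the equivalence on finite-dimensional module categories, deduce full faithfulness on the perfect subcategories, and then check that perfectness is detected by a residue-field criterion insensitive to completion. The only cosmetic difference is that you phrase this last step as faithfully flat descent of finite Tor-dimension, whereas the paper tests perfectness via the vanishing of $\Hom\bigl(X^\bullet, O_i/\idm_i[n]\bigr)$ for $n \gg 0$; the two criteria are equivalent for bounded complexes with finitely generated cohomology over a Noetherian local ring.
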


\begin{proof}
Since the functors $O_i-\fdmod \rightarrow \widehat{O}_i-\fdmod$ and
$A_i-\fdmod \rightarrow \widehat{A}_i-\fdmod$ are equivalences of categories, they
induce equivalences $D^b\bigl(O_i-\fdmod\bigr) \rightarrow D^b\bigl(\widehat{O}_i-\fdmod\bigr)$ and $D^b\bigl(A_i-\fdmod\bigr) \rightarrow D^b\bigl(\widehat{A}_i-\fdmod\bigr)$. In particular,
the functor $\Perf_{\mathsf{fd}}(O_i) \rightarrow \Perf_{\mathsf{fd}}(\widehat{O}_i)$ is fully faithful.
In order to show it is essentially surjective, it is sufficient to prove that a non-perfect
complex can not become perfect after applying the completion functor.
Indeed, $X^\bullet \in \Ob\bigl(D^b\bigl(O_i-\mod\bigr)\bigr)$ is perfect if and only if there
exists $n_0 \in \mathbb{N}$ such that for all $n \ge n_0$ we have:
$\Hom\bigl(X^\bullet, O_i/\idm_i[n]\bigr) = 0$. But this property is obviously preserved under the passing to the completion.
\end{proof}

\begin{corollary}\label{C:Localization}
Let $\widetilde{\cP}_i$ (respectively $\widehat{\cP}_i$) be the essential image of
the triangle functor $\widehat{F}_i \stackrel{\LL}\otimes_{\widehat{O}_i} \,-\,\colon \Perf(\widehat{O}_i) \rightarrow
D^b\bigl(\widehat{A}_i-\mod\bigr)$ (respectively $\widehat{F}_i \stackrel{\LL}\otimes_{\widehat{O}_i} \,-\,\colon
\Perf_{\mathsf{fd}}(\widehat{O}_i) \rightarrow
D^b\bigl(\widehat{A}_i-\fdmod\bigr)$). Then we have an equivalence of triangulated categories
$$
\frac{D^b\bigl({A}_i-\fdmod\bigr)}{\cP_i} \lar
\frac{D^b\bigl(\widehat{A}_i-\fdmod\bigr)}{\widehat{\cP}_i}.
$$
Going along the same lines as in Theorem \ref{T:keylocaliz},  one can show
that the canonical functor
$$
\left(
\frac{D^b\bigl(\widehat{A}_i-\fdmod\bigr)}{\widehat{\cP}_i}
\right)^\omega
\lar
\left(
\frac{D^b\bigl(\widehat{A}_i-\mod\bigr)}{\widetilde{\cP}_i}
\right)^\omega
$$
is an equivalence. Summing up, we have equivalences of triangulated categories
$$
\bigvee_{i=1}^p \left(
\frac{D^b\bigl(\widehat{A}_i-\mod\bigr)}{\widetilde{\cP}_i}
\right)^\omega \stackrel{\sim}\longleftarrow
\bigvee_{i=1}^p \left(
\frac{D^b\bigl({A}_i-\fdmod\bigr)}{{\cP}_i}
\right)^\omega  \stackrel{\sim}\lar
\left(
\frac{D^b\bigl(\Coh(\XX)\bigr)}{\cP(X)}
\right)^\omega =: \Delta_{X}(\XX).
$$
\end{corollary}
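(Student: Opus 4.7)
The corollary packages three equivalences together, and I would treat them in turn, reusing the machinery already developed in the excerpt.

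For the first equivalence, between $D^b(A_i-\fdmod)/\cP_i$ and $D^b(\widehat{A}_i-\fdmod)/\widehat{\cP}_i$, I would simply appeal to the preceding lemma. That lemma provides equivalences $\Perf_{\mathsf{fd}}(O_i)\stackrel{\sim}\lar \Perf_{\mathsf{fd}}(\widehat{O}_i)$ and $D^b(A_i-\fdmod)\stackrel{\sim}\lar D^b(\widehat{A}_i-\fdmod)$ which intertwine the tensor functors $F_i \stackrel{\LL}\otimes_{O_i} -$ and $\widehat{F}_i \stackrel{\LL}\otimes_{\widehat{O}_i} -$. Since $\cP_i$ and $\widehat{\cP}_i$ are by definition the essential images of these tensor functors, the diagram identifies them, and passing to Verdier quotients immediately yields the claimed equivalence — no idempotent completion is needed at this stage.

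For the second equivalence, my plan is to replay the proof of Theorem \ref{T:keylocaliz} verbatim, with the affine non-commutative ringed space $(\Spec(\widehat{O}_i),\widehat{A}_i)$ in place of $\XX$ and the closed point $\{\mathfrak{m}_i\}$ in place of $Z$. The two inputs transfer without trouble: Proposition \ref{P:FullyFaithf} only requires a Koszul-type resolution of $\widehat{O}_i/\mathfrak{m}_i^t$ by finite-rank free $\widehat{O}_i$-modules together with the adjunction $(\eta^*,\eta_*)$ attached to the quotient $\widehat{A}_i \to \widehat{A}_i/\mathfrak{m}_i^t \widehat{A}_i$, and both are available because $\widehat{O}_i$ is Noetherian local; essential surjectivity after idempotent completion then uses the Thomason–Trobaugh splitting on the quasi-compact quasi-separated punctured spectrum $\Spec(\widehat{O}_i)\setminus\{\mathfrak{m}_i\}$. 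The identification of $D^b(\widehat{A}_i-\fdmod)$ with the full subcategory of $D^b(\widehat{A}_i-\mod)$ consisting of complexes with finite length cohomology, which is needed to set up the argument, is the $e=0$ special case of Lemma \ref{P:subtleProp}.

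The final chain of equivalences is then obtained by assembling the first two with Lemma \ref{L:splittingFinSupp}, which produces $\bigvee_{i}D^b(A_i-\fdmod)\stackrel{\sim}\lar D^b_Z(\Coh(\XX))$ together with $\bigvee_{i}\cP_i \stackrel{\sim}\lar \cP_Z(X)$, and with Theorem \ref{T:keylocaliz}, which passes from the local $Z$-supported picture to $\Delta_X(\XX)$ after idempotent completion. The genuine obstacle is the second step: unlike the first equivalence, where the underlying abelian categories are already equivalent, essential surjectivity in the second equivalence truly requires Thomason–Trobaugh, so it holds only up to direct summands and forces the idempotent completion. The main bookkeeping is to check that the unit of adjunction used in the full faithfulness argument and the Koszul resolution of $\widehat{O}_i/\mathfrak{m}_i^t$ are compatible with restriction to the punctured spectrum, so that the splitting produced by Thomason–Trobaugh can actually be lifted along $\HH$.
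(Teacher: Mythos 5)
Your proposal is correct and follows exactly the route the paper itself takes: the first equivalence is immediate from the preceding completion lemma identifying the subcategories $\cP_i$ and $\widehat{\cP}_i$ under the equivalence $D^b(A_i-\fdmod)\stackrel{\sim}\lar D^b(\widehat{A}_i-\fdmod)$, the second is obtained by rerunning the proof of Theorem \ref{T:keylocaliz} for the complete local pair (with the $e=0$ case of Lemma \ref{P:subtleProp} and Thomason--Trobaugh on the punctured spectrum), and the final chain is assembled from Lemma \ref{L:splittingFinSupp} and Theorem \ref{T:keylocaliz}. Your remarks on where idempotent completion becomes unavoidable match the paper's presentation.
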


Now, let  $Y$ be a nodal algebraic curve, $Z = \{x_1, \dots, x_p\}$ the singular locus of $Y$,
$\kI = \kI_Z$ the ideal sheaf of $Z$ and  $\kF = \kO \oplus \kI$. Then
$\kA = {\mathcal End}_Y(\kF)$ is the \emph{Auslander sheaf of orders} introduced in \cite{Tilting}.
Let $\mathbb{Y} = (Y, \kA)$ be the corresponding non-commutative curve.
According to \cite[Theorem 2]{Tilting}, we have:  $\mathsf{gl.dim}\bigl(\Coh(\YY)\bigr) = 2$. Thus,
$\YY$ is a non-commutative resolution of $Y$ and  Corollary \ref{C:Localization} specializes to the following statement.

\begin{corollary}\label{C:nodalcurve}
The triangulated category
$\Delta_Y(\YY)$ splits into a union of $p$ blocks $\Delta_{\mathsf{nd}}$, where $\Delta_{\mathsf{nd}}$
is the ``local'' contribution of a singular point of $Y$ (see also Section \ref{S:ReprTheory} for
an explicit  description  of $\Delta_{\mathsf{nd}}$).
\end{corollary}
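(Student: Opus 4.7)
The plan is to deduce the corollary directly from Corollary \ref{C:Localization} together with the uniformity of nodal singularities in the formal neighborhood. Specializing Corollary \ref{C:Localization} to the input $(X, \XX, \kF) = (Y, \YY, \kO \oplus \kI_Z)$ yields an equivalence of triangulated categories
$$
\Delta_Y(\YY) \stackrel{\sim}\lar \bigvee_{i=1}^p \left(\frac{D^b\bigl(\widehat{A}_i-\mod\bigr)}{\widetilde{\cP}_i}\right)^\omega,
$$
where $\widehat{A}_i \cong \End_{\widehat{O}_i}(\widehat{F}_i)$ and $\widetilde{\cP}_i$ is the essential image of $\widehat{F}_i \otimes^{\LL}_{\widehat{O}_i} -\colon \Perf(\widehat{O}_i) \to D^b(\widehat{A}_i-\mod)$. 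The problem therefore reduces to showing that each factor on the right-hand side is equivalent to one and the same triangulated category $\Delta_{\mathsf{nd}}$ depending only on the abstract formal node.

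For this second step I would use that $Y$ has nodal singularities over an algebraically closed field $\kk$: for every $1 \le i \le p$, there is an isomorphism of local $\kk$-algebras $\widehat{O}_i \cong R := \kk[[u,v]]/(uv)$ which identifies the completed stalk of the ideal sheaf $\kI_Z$ at $x_i$ with the maximal ideal $\mathfrak{m} = (u,v) \subset R$. It follows that the pair $(\widehat{O}_i, \widehat{F}_i)$ is isomorphic to $(R, R \oplus \mathfrak{m})$ for every $i$, and consequently there is an isomorphism of $R$-orders $\widehat{A}_i \cong A := \End_R(R \oplus \mathfrak{m})$, sending $\widehat{F}_i$ to $R \oplus \mathfrak{m}$ viewed as a left $A$-module. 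Defining
$$
\Delta_{\mathsf{nd}} := \left(\frac{D^b(A-\mod)}{\widetilde{\cP}}\right)^\omega,
$$
with $\widetilde{\cP}$ the essential image of $(R \oplus \mathfrak{m}) \otimes^{\LL}_R -\colon \Perf(R) \to D^b(A-\mod)$, every block in the preceding decomposition becomes equivalent to $\Delta_{\mathsf{nd}}$, whence $\Delta_Y(\YY) \stackrel{\sim}\lar \bigvee_{i=1}^p \Delta_{\mathsf{nd}}$.

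There is no serious obstacle at this level: the isomorphism $\widehat{O}_i \cong R$ is the defining feature of a node over an algebraically closed field, and the uniformity of the pair $(\widehat{F}_i, \widehat{A}_i)$ up to isomorphism is then automatic. The more substantial identification of the intrinsic object $\Delta_{\mathsf{nd}}$ with the explicit quiver-theoretic model from the introduction --- the completed path algebra $A_{\mathsf{nd}}$ with relations $\alpha\delta = 0 = \beta\gamma$ (which reflect the equation $uv = 0$ in $R$ together with the decomposition $\mathfrak{m} \cong (u) \oplus (v)$ in $R-\mod$) and the matching of the perfect subcategory with $\Hot^b(\add(P_*))$ via $\mathsf{gl.dim}(A) < \infty$ --- is the content of Section \ref{S:ReprTheory} and is not required for the present corollary.
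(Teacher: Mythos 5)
Your proposal is correct and follows the same route as the paper: the paper likewise obtains this corollary by specializing Corollary \ref{C:Localization} to $(Y,\YY,\kO\oplus\kI_Z)$ and using that each completed pair $(\widehat{O}_i,\widehat{F}_i)$ is isomorphic to $(\kk\llbracket u,v\rrbracket/(uv),\, R\oplus\mathfrak{m})$, so that every block is the same category $\Delta_{\mathsf{nd}}=\Delta_{O_{\mathsf{nd}}}(A_{\mathsf{nd}})$. You have merely made explicit the formal-local uniformity of the node that the paper leaves implicit, which is fine.
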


\noindent
The goal of  the subsequent part of this article is to answer the following questions.
\begin{itemize}
\item Is the category $\Delta_Y(\YY)$ $\Hom$--finite? What are its indecomposable objects?
\item What is the Grothendieck group of $\Delta_Y(\YY)$?
\item Assume $E$ is a plane nodal cubic curve. What is the relation of
$\Delta_E(\EE)$ with the ``quiver description'' of $D^b\bigl(\Coh(\EE)\bigr)$ from
\cite[Section 7]{Tilting}?
\end{itemize}

\section{On the K-theory of the relative singularity category $\Delta_{X}(\XX)$ }\label{S:KTheory}
Let $O$ be a \emph{complete Gorenstein} local ring and  $F =
O \oplus F_1 \oplus \dots \oplus F_r \in O-\mod$, where $F_1, \dots, F_r$ are
indecomposable and pairwise non-isomorphic and such that
$O$ does not belong to $\add(F_1 \oplus \dots \oplus F_r)$. Let $A=\End_{O}(F)$, $\cP(O)$ be the essential image
of $\Perf(O)$ under the exact embedding $F \stackrel{\LL}\otimes_O \,-\, \colon
\Perf(O) \rightarrow D^b(A-\mod)$ and
$$
\Delta_O(A) := \left(
\frac{D^b(A-\mod)}{\cP(O)}
\right).
$$
The following result is well-known to specialists.

\begin{lemma}\label{L:keyMCM}
The triangulated category $D^b(O-\mod)/\Perf(O)$ is idempotent complete.
\end{lemma}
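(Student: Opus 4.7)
The plan is to combine Buchweitz's equivalence with the Krull--Schmidt property of module categories over a complete local ring.

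First, I would invoke \cite[Theorem 4.4.1]{Buchweitz}: since $O$ is a Gorenstein local ring, there is an equivalence of triangulated categories
$$
D^b(O-\mod)/\Perf(O) \stackrel{\sim}\lar \underline{\MCM}(O).
$$
This reduces the question to showing that the stable category $\underline{\MCM}(O)$ is idempotent complete.

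Next, I would use that $O$ is \emph{complete} local. By the classical Krull--Schmidt theorem, the endomorphism ring of every indecomposable finitely generated $O$--module is local, and every finitely generated $O$--module decomposes into a finite direct sum of indecomposables with local endomorphism rings. In particular, the additive category $\MCM(O)$ is Krull--Schmidt. The stable category $\underline{\MCM}(O)$ is obtained from $\MCM(O)$ by factoring out the ideal of morphisms that factor through a free $O$--module. Any decomposition $M = M_1 \oplus \dots \oplus M_s$ in $\MCM(O)$ descends to a decomposition in $\underline{\MCM}(O)$, after discarding projective summands; and for every indecomposable non-projective summand $M_i$, the ring $\underline{\End}_O(M_i)$ is a quotient of the local ring $\End_O(M_i)$, hence local. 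Thus $\underline{\MCM}(O)$ is again Krull--Schmidt.

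Finally, any Krull--Schmidt additive category is automatically idempotent complete: an idempotent $e \in \End(X)$ with local endomorphism ring is $0$ or $\mathbbm{1}$, and in general one writes $X$ as a finite sum of indecomposables, transports $e$ to a matrix of morphisms, and uses locality of each diagonal block to split. I expect the only point requiring a moment of care is checking that Krull--Schmidt is inherited by the quotient $\underline{\MCM}(O)$ from $\MCM(O)$, i.e.~that decompositions and locality of endomorphism rings pass to the stable category; this is straightforward once one keeps track of projective summands. No further obstacle is anticipated, and the total length of the argument should be only a few lines.
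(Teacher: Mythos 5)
Your proposal is correct and follows essentially the same route as the paper: reduce via Buchweitz's equivalence to $\underline{\MCM}(O)$, use completeness of $O$ to get local endomorphism rings of indecomposables (hence a Krull--Schmidt, i.e.~local, category), and conclude idempotent completeness. The only cosmetic difference is that you spell out the descent of the Krull--Schmidt property to the stable category, which the paper leaves implicit.
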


\begin{proof} By a result of Buchweitz \cite{Buchweitz}, we have an equivalence of triangulated
categories
$$
\frac{D^b(O-\mod)}{\Perf(O)}
\stackrel{\sim}\lar
\underline{\MCM}(O),
$$
where $\underline{\MCM}(O)$ is the \emph{stable category} of maximal Cohen--Macaulay modules
over $O$. Hence, it suffices to show that $\underline{\MCM}(O)$ is idempotent complete.

Since the ring $O$ is \emph{complete}, the endomorphism algebra of an  indecomposable Noetherian $O$--module is \emph{local}, see \cite[Proposition 6.10]{CurtisReiner}.
Let  $M$ be any object of  $\underline{\MCM}(O)$. Then it  admits a decomposition
 $M \cong M_1 \oplus \dots \oplus M_p$, such that  the ring   $\underline{\End}_O(M_i)$ is
 local for any $ 1 \le i \le p$ (in other words, $\underline{\MCM}(O)$ is a \emph{local} category).
 Hence,
$\underline{\MCM}(O)$ is idempotent complete, see for example \cite[Corollary 13.9]{KrullSchmidt}.
\end{proof}

\noindent
The main result of this section is the following.

\begin{theorem}\label{T:ResultsonVerdierQout}
The category $\Delta_O(A)$ is idempotent complete. Moreover, if
$\gldim(A) < \infty$ then $K_0\bigl(\Delta_O(A)\bigr) \cong \ZZ^r$.
\end{theorem}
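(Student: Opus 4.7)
The plan is to prove the two assertions separately: idempotent completeness will follow from a Krull--Schmidt argument exploiting the completeness of $O$, and the $K_0$-computation is then a direct application of Schlichting's negative K-theory localization theorem \cite{Schlichting}.

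\emph{Idempotent completeness.} Since $O$ is complete Noetherian and $A$ is module-finite over $O$, the category of finitely generated $A$-modules is Krull--Schmidt, with local endomorphism rings for indecomposables by \cite[Proposition 6.10]{CurtisReiner}. Consequently $D^b(A-\mod)$ is Krull--Schmidt. The plan is to show that this property descends to $\Delta_O(A)$, whence idempotent completeness follows by \cite[Corollary 13.9]{KrullSchmidt}. The approach is to construct a conservative triangle functor
\[
\Pi \colon \Delta_O(A) \lar \underline{\MCM}(O) \vee \mathcal{N},
\]
where the first component is induced by a natural triangulated functor $D^b(A-\mod) \to D^b(O-\mod) \twoheadrightarrow \underline{\MCM}(O)$ (using Lemma \ref{L:keyMCM} for the second arrow), and $\mathcal{N}$ is an explicit Krull--Schmidt ``correction'' category encoding the contribution of the indecomposable summands $F_1, \dots, F_r$ of $F$. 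Conservativity together with Krull--Schmidt of both components then forces Krull--Schmidt of $\Delta_O(A)$.

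\emph{Grothendieck group.} Assume $\gldim A < \infty$, so that $D^b(A-\mod) = \Perf(A) = \Hot^b(\add A)$. Let $e_0, e_1, \dots, e_r$ denote the idempotents of $A = \End_O(F)$ projecting $F$ onto its summands $F_0 := O, F_1, \dots, F_r$. Completeness of $O$ ensures that the indecomposable projective $A$-modules are exactly $\{Ae_i\}_{i=0}^r$, so $K_0(\Perf A) \cong \ZZ^{r+1}$ with basis $\{[Ae_0], [Ae_1], \dots, [Ae_r]\}$. The fully faithful embedding $\Phi = F \stackrel{\LL}\otimes_O (-) \colon \Perf(O) \hookrightarrow \Perf(A)$ sends the generator $[O] \in K_0(\Perf O) \cong \ZZ$ to $[F] = [Ae_0]$. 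Applying Schlichting's localization theorem to the exact sequence
\[
\Perf(O) \lar \Perf(A) \lar \Delta_O(A)
\]
of idempotent-complete triangulated categories (using the first part) produces a long exact sequence
\[
K_0(\Perf O) \lar K_0(\Perf A) \lar K_0(\Delta_O(A)) \lar K_{-1}(\Perf O).
\]
The vanishing $K_{-1}(\Perf O) = 0$ holds by Schlichting's regularity criterion applied to $\Perf(O)$, or equivalently via the classical vanishing of $K_{-1}$ for a complete local Noetherian ring. We conclude $K_0(\Delta_O(A)) \cong \ZZ^{r+1}/\ZZ\cdot[Ae_0] \cong \ZZ^r$, with basis $\{[Ae_i]\}_{i=1}^r$.

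\emph{Main obstacle.} The hardest part is idempotent completeness. Morphisms in the Verdier quotient $\Delta_O(A)$ are represented by ``roofs'', not by morphisms in $D^b(A-\mod)$ modulo a two-sided ideal (as is the case in the stable category $\underline{\MCM}(O)$), so endomorphism rings of indecomposables are not automatically quotients of their counterparts in $D^b(A-\mod)$, and a priori new idempotents may appear. Constructing the conservative functor $\Pi$ and verifying that it separates indecomposables is the substantive technical work. An attractive alternative is to realise $\Delta_O(A)$ as the perfect derived category of a suitable dg algebra via Keller's dg quotient construction, where idempotent completeness is automatic; this comes at the price of additional dg-categorical machinery, which is not developed elsewhere in the paper.
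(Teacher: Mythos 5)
Your proposal has two genuine gaps, and it also misses the mechanism by which the paper obtains idempotent completeness.

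First, your argument for idempotent completeness is a plan rather than a proof: the functor $\Pi$ and the ``correction'' category $\mathcal{N}$ are never constructed, and even granting their existence, conservativity of a triangle functor into a Krull--Schmidt category does not by itself force the source to be Krull--Schmidt or to have split idempotents (you acknowledge this is the hard point, but you do not resolve it). The paper avoids this entirely: it runs Schlichting's localization sequence for the Frobenius pair underlying $\cP(O) \to D^b(A-\mod) \to \Delta_O(A)$, shows $K_{-1}\bigl(\cP(O)\bigr) = K_{-1}\bigl(\Perf(O)\bigr) = 0$, concludes that $K_0\bigl(\Delta_O(A)\bigr) \to K_0\bigl(\Delta_O(A)^\omega\bigr)$ is an isomorphism, and then invokes Thomason's density theorem \cite[Theorem 2.1]{Thomason} to deduce that $\Delta_O(A) \to \Delta_O(A)^\omega$ is an equivalence. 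Idempotent completeness thus \emph{falls out of} the K-theoretic computation; no Krull--Schmidt property of the quotient is needed.

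Second, your assertion that $K_{-1}\bigl(\Perf(O)\bigr) = 0$ is unsupported. Schlichting's vanishing theorem \cite[Theorem 7]{Schlichting} applies to $D^b$ of an abelian category, hence to $D^b(O-\mod)$, but \emph{not} to $\Perf(O)$ when $O$ is singular; and there is no ``classical vanishing of $K_{-1}$ for a complete local Noetherian ring'' to cite --- negative K-groups of non-regular Noetherian rings are subtle and can be nonzero in general. This vanishing is precisely the nontrivial input of the theorem, and the paper derives it from the Gorenstein hypothesis: by Buchweitz, $D^b(O-\mod)/\Perf(O) \simeq \underline{\MCM}(O)$, which is idempotent complete because $O$ is complete (Lemma \ref{L:keyMCM}); comparing the right-exact $K_0$-sequence with Schlichting's sequence then forces the boundary map to $K_{-1}\bigl(\Perf(O)\bigr)$ to vanish, and injectivity into $K_{-1}\bigl(D^b(O-\mod)\bigr) = 0$ gives $K_{-1}\bigl(\Perf(O)\bigr) = 0$. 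Your final cokernel computation $\ZZ^{r+1}/\ZZ\cdot[Ae_0] \cong \ZZ^r$ is correct once these two points are repaired, but note also that as written your $K_0$-argument presupposes the idempotent completeness you have not established; the paper sidesteps this by using the right-exact sequence ending in $K_0\bigl(\Delta_O(A)\bigr) \to 0$.
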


\begin{proof}
First note that we have the following long exact sequences of abelian groups
$$
K_0\bigl(\Perf(O)\bigr) \stackrel{\mathsf{can}}\lar K_0\bigl(D^b(O-\mod)\bigr) \lar
K_0\bigl(\underline{\MCM}(O)\bigr) \lar 0
$$
and
$
K_0\bigl(\Perf(O)\bigr) \stackrel{\mathsf{can}}\lar  K_0\bigl(D^b(O-\mod)\bigr) \rightarrow
K_0\bigl((\underline{\MCM}(O))^\omega\bigr) \rightarrow K_{-1}\bigl(\Perf(O)\bigr) \rightarrow \qquad$  $
K_{-1}\bigl(D^b(O-\mod)\bigr),
$
where $K_{-1}\bigl(\Perf(O)\bigr)$ and $K_{-1}\bigl(D^b(O-\mod)\bigr)$ denote
the negative K--groups of  stable Frobenius categories of Schlichting \cite[Section 4]{Schlichting},
associated with the \emph{Frobenius pairs}  $\Bigl(\Com^b\bigl(\add(O)\bigr), \Com^b_{\mathsf{ac}}\bigl(\add(O)\bigr)\Bigr)$ and $\Bigl(\Com^{-,\, b}\bigl(\add(O)\bigr), \Com^{-}_{\mathsf{ac}}\bigl(\add(O)\bigr)\Bigr)$ respectively, see
\cite[Theorem 1]{Schlichting}. By \cite[Theorem 7]{Schlichting} we have  $K_{-1}\bigl(D^b(O-\mod)\bigr) = 0$. Since by Lemma \ref{L:keyMCM} the stable category $\underline{\MCM}(O)$ is idempotent complete, we also obtain  the vanishing $K_{-1}\bigl(\Perf(O)\bigr) = 0$.
In a similar way, we have long exact sequences
$$
K_0\bigl(\cP(O)\bigr) \stackrel{\mathsf{can}}\lar K_0\bigl(D^b(A-\mod)\bigr) \lar
K_0\bigl(\Delta_O(A)\bigr) \lar 0
$$
and
$
K_0\bigl(\cP(O)\bigr) \stackrel{\mathsf{can}}\lar  K_0\bigl(D^b(A-\mod)\bigr) \rightarrow
K_0\bigl(\Delta_O(A)^\omega\bigr) \rightarrow K_{-1}\bigl(\cP(O)\bigr) \rightarrow 0.
$
Since the morphism of Frobenius pairs
$$
F \otimes_O \,-\, : \Bigl(\Com^b\bigl(\add(O)\bigr), \Com^b_{\mathsf{ac}}\bigl(\add(O)\bigr)\Bigr)
\lar
\Bigl(\Com^b\bigl(\add(F)\bigr), \Com^b_{\mathsf{ac}}\bigl(\add(F)\bigr)\Bigr)
$$
induces an equivalence of the corresponding stable Frobenius categories,
\cite[Proposition 7]{Schlichting} implies that $K_{-1}\bigl(\cP(O)\bigr) = K_{-1}\bigl(\Perf(O)\bigr) = 0$. Hence,
the canonical homomorphism of abelian groups
$K_0\bigl(\Delta_O(A)\bigr)  \rightarrow K_0\bigl(\Delta_O(A)^\omega\bigr)$
is an isomorphism. By a result of Thomason \cite[Theorem 2.1]{Thomason},
the canonical functor $\Delta_O(A) \rightarrow \Delta_O(A)^\omega$ is an equivalence
of triangulated categories, i.e.~the triangulated category $\Delta_O(A)$ is idempotent complete.

Since $O$ is a complete ring,   $A$ is semi-perfect with $r+1$ pairwise non-isomorphic
 indecomposable  projective
modules.
If $\gldim(A) < \infty$ then \cite[Proposition 16.7]{CurtisReiner} implies that
$K_0\bigl(D^b(A-\mod)\bigr) \cong K_0\bigl(A-\mod\bigr) \cong  \ZZ^{r+1}$. Moreover, the image of the canonical homomorphism
$\mathsf{can}: K_0\bigl(\cP(O)\bigr) \rightarrow K_0\bigl(D^b(A-\mod)\bigr)$
is the free abelian group generated by the class of the projective module $F$. Hence,
$K_0\bigl(\Delta_O(A)\bigr) \cong \coker(\mathsf{can}) \cong  \ZZ^{r}$.
\end{proof}

\section{Description of the category $\Delta_{\mathsf{nd}}$}\label{S:ReprTheory}
\noindent
In this section $A_{\mathsf{nd}}$ denotes the arrow ideal completion of the path algebra of the following
quiver with relations $\vec{Q}_{\mathsf{nd}}$
\begin{equation}\label{E:nodalquiver}
\begin{xy}\SelectTips{cm}{}
\xymatrix
{- \ar@/^/[rr]^{ \ro }  & &  \ast \ar@/^/[ll]^{ \lu }
 \ar@/_/[rr]_{ \ru }
 & &
\ar@/_/[ll]_{ \lo } +}\end{xy}  \qquad  \ru   \ro  = 0, \quad   \lu   \lo  = 0.
\end{equation}
\begin{remark}
Note that $A_{\mathsf{nd}} = \End_{O_{\mathsf{nd}}}\bigl(O_{\mathsf{nd}} \oplus \kk\llbracket u\rrbracket
\oplus \kk\llbracket v\rrbracket\bigr)$ is the \emph{Auslander algebra} of the nodal curve singularity $O_{\mathsf{nd}}= \kk\llbracket u, v\rrbracket/uv$. In particular $\gldim(A_{\mathsf{nd}})=2$, see  \cite{AuslanderRoggenkamp} or \cite[Remark 1]{Tilting}.
\end{remark}
\noindent
Our goal is to study  the triangulated Verdier quotient category
$$
\Delta_{\mathsf{nd}}:=\frac{D^b(A_{\mathsf{nd}}-\mod)}{\Hot^b\bigl(\add(P_{*})\bigr)} \cong \frac{\Hot^b\bigl(\Pro(A_{\mathsf{nd}})\bigr)}{\Hot^b\bigl(\add(P_{*})\bigr) } \cong
\Delta_{O_{\mathsf{nd}}}(A_{\mathsf{nd}}),
$$
where $P_{*}$ is the indecomposable projective $A_{\mathsf{nd}}$--module corresponding to the vertex $*$.
By Theorem \ref{T:ResultsonVerdierQout} we know that
$\Delta_{\mathsf{nd}}$ is idempotent complete and $K_{0}(\Delta_{\mathsf{nd}})\cong \bigl\langle [P_{-}], [P_{+}]\bigr\rangle \cong \ZZ^2$.
\begin{definition} \label{D:MinS}
Let $\sigma, \tau \in \{\li, \re\}$ and $l \in \mathbb{N}$. A \emph{minimal string} $\mathcal{S}_{\tau}(l)$ is a complex of indecomposable projective $A_{\mathsf{nd}}$--modules
$$
\begin{xy}\SelectTips{cm}{}
\xymatrix{
\cdots\ar[r] & 0 \ar[r] & P_{\sigma} \ar[r] & P_{\ast} \ar[r] & \cdots \ar[r] &P_{\ast} \ar[r] &P_{\tau} \ar[r]& 0 \ar[r] &\cdots}
\end{xy}
$$
of length $l+2$ with differentials given by non-trivial paths of minimal possible
length and $P_{\tau}$ located in degree $0$. Note, that $\sigma$ is uniquely determined by $\tau$ and $l$:
$$\begin{cases} \sigma=\tau \quad \text{  if } l \text{ is even,}   \\  \sigma \ne \tau \quad \text{  if } l  \text{ is odd.} \end{cases}$$
\end{definition}

\begin{example} The two complexes depicted below are minimal strings:
\begin{itemize}
\item $\mathcal{S}_{\re}(1)=\begin{xy}\SelectTips{cm}{}\xymatrix{
\cdots \ar[r]& 0  \ar[r] & \, 0 \, \,  \ar[r] & P_{\li} \ar[r]^{\cdot  \lu } & P_{\ast} \ar[r]^{\cdot  \lo }  &P_{\re} \ar[r]& 0 \ar[r] &\cdots}\end{xy}$
\item $\mathcal{S}_{\re}(2)=\begin{xy}\SelectTips{cm}{}\xymatrix{
\cdots\ar[r] & 0 \ar[r] & P_{\re} \ar[r]^{\cdot  \ru } & P_{\ast} \ar[r]^{\cdot  \ro  \lu } &P_{\ast} \ar[r]^{\cdot  \lo }  &P_{\re} \ar[r]& 0 \ar[r] &\cdots}\end{xy}$
\end{itemize}
\end{example}

\noindent
It is interesting to note that the images of minimal strings remain to be  \emph{indecomposable} in $\Delta_{\mathsf{nd}}$. In order to prove this, we need the following result of Verdier {\cite[Proposition II.2.3.3]{Verdier}}, playing a key role in the sequel.

\begin{lemma} \label{L:Verdier}
Let $\kT$ be a triangulated category and let $\kU \subseteq \kT$ be a full triangulated subcategory. Let $Y$ be an object in $^{\perp}\kU = \bigl\{T \in \Ob(\kT) \big| \Hom_{\kT}(T, \kU)=0\bigr\}$ and let
$$
\PP\colon  \Hom_{\kT}(Y, X) \longrightarrow \Hom_{\kT/\kU}(Y, X)
$$
be the map induced by the localization functor.
Then $\PP$ is bijective for all $X$ in $\kT$.
\end{lemma}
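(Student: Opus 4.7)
My plan is to exploit the calculus of left fractions in the Verdier quotient. Recall that a morphism $Y \to X$ in $\kT/\kU$ is represented by a roof
\[
Y \stackrel{s}{\longleftarrow} Z \stackrel{f}{\longrightarrow} X,
\]
where $s$ fits into a distinguished triangle $Z \stackrel{s}{\to} Y \to C \to Z[1]$ with $C \in \kU$, and the localization functor sends $g \in \Hom_{\kT}(Y,X)$ to the roof $(Y \stackrel{\mathrm{id}}{\leftarrow} Y \stackrel{g}{\to} X)$.

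The key structural observation is that, under the assumption $Y \in {}^{\perp}\kU$, every such $s$ is a split epimorphism. Indeed, applying $\Hom_{\kT}(Y,-)$ to the triangle above produces the exact sequence
\[
\Hom_{\kT}(Y,Z) \stackrel{s_{\ast}}{\longrightarrow} \Hom_{\kT}(Y,Y) \longrightarrow \Hom_{\kT}(Y,C),
\]
whose last term vanishes because $C \in \kU$. Hence $s_{\ast}$ is surjective, and one can choose $t \in \Hom_{\kT}(Y,Z)$ with $s \circ t = \mathrm{id}_{Y}$. This is the single technical input on which everything else rests.

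Given this section $t$, both statements become routine. For surjectivity, the arbitrary roof $(s,f)$ is equivalent to $(Y \stackrel{\mathrm{id}}{\leftarrow} Y \stackrel{f \circ t}{\to} X)$ via the morphism $t\colon Y \to Z$, so it lies in the image of $\PP$. For injectivity, if $g \in \Hom_{\kT}(Y,X)$ satisfies $\PP(g) = 0$, then by the standard characterization of zero morphisms in a Verdier quotient there exists an $s \colon Z \to Y$ with cone in $\kU$ such that $g \circ s = 0$ in $\kT$; composing with the section $t$ gives $g = g \circ s \circ t = 0$.

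The only real point that requires care is the bookkeeping with roofs: one must verify that the passage from the roof $(s,f)$ to the honest morphism $f \circ t \in \Hom_{\kT}(Y,X)$ is independent of the chosen section $t$ and of the chosen representative of the equivalence class, but this follows from the axioms of the multiplicative system of weak equivalences in the Verdier construction. I do not expect a genuine obstacle beyond this.
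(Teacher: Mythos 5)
Your proof is correct. The paper itself gives no argument for this lemma --- it simply cites Verdier's Proposition II.2.3.3 --- and what you have written is precisely the standard proof of that cited fact: the vanishing of $\Hom_{\kT}(Y,C)$ for $C\in\kU$ forces every denominator $s\colon Z\to Y$ of a roof to admit a section $t$, which immediately yields surjectivity (the roof $(s,f)$ is dominated by $(\mathrm{id}_Y, f\circ t)$ via $t$) and injectivity (from $g\circ s=0$ one gets $g=g\circ s\circ t=0$). One small remark: the well-definedness of the assignment $(s,f)\mapsto f\circ t$ that you worry about at the end is not actually needed, since your argument only uses that every roof is equivalent to one with identity denominator and that $\PP$ has trivial kernel, both of which you have already established.
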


\noindent
Of course, there is a dual result for $Y$ in $\kU^{\perp}$.

\begin{lemma}\label{L:MinS}
Let $\tau \in \{+, -\}$ and $l \in \mathbb{N}$. Then any minimal string
$\kS=\kS_{\tau}(l)$ belongs to  $\Ob\bigl(^\perp\hspace{-1pt}\Hot^b\bigl(\add(P_{*})\bigr) \bigr) \cap \Ob\bigl(\Hot^b\bigl(\add(P_{*})\bigr)^\perp\bigr)$. Moreover,   $\kS$ is indecomposable in $\Delta_{\mathsf{nd}}.$
\end{lemma}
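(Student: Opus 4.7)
The plan is to compute the relevant $\Hom$-groups in the ambient homotopy category $\Hot^b(\Pro(A_{\mathsf{nd}}))$ directly, and then to invoke Lemma~\ref{L:Verdier} to transfer the resulting orthogonality into the Verdier quotient $\Delta_{\mathsf{nd}}$. Set $O := e_{\ast}A_{\mathsf{nd}}e_{\ast} \cong O_{\mathsf{nd}}$ and write $u := \alpha\beta$, $v := \gamma\delta$ for the two generators of its maximal ideal $\gm$; these satisfy $uv = 0$. The key algebraic input is the pair of identities $\Ann_O(u) = vO$ and $\Ann_O(v) = uO$, which imply that the two-sided sequence
$$ \cdots \lar O \stackrel{u\cdot}\lar O \stackrel{v\cdot}\lar O \stackrel{u\cdot}\lar O \lar \cdots $$
is exact. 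Parallel identifications give $e_{-}A_{\mathsf{nd}}e_{\ast} \cong O/vO$ as a right $O$-module (via $q \mapsto \beta q$) and $e_{+}A_{\mathsf{nd}}e_{\ast} \cong O/uO$ (via $q \mapsto \delta q$), and symmetrically for $e_{\ast}A_{\mathsf{nd}}e_{\pm}$.

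First I will prove that $\kS := \kS_{\tau}(l)$ lies in both $^{\perp}\Hot^b(\add(P_{\ast}))$ and $\Hot^b(\add(P_{\ast}))^{\perp}$. Since $\Hot^b(\add(P_{\ast}))$ is the thick triangulated subcategory generated by the single object $P_{\ast}$, this reduces to checking that $\Hom_{\Hot}(\kS, P_{\ast}[n]) = 0 = \Hom_{\Hot}(P_{\ast}[n], \kS)$ for every $n \in \ZZ$. Because $\kS$ is a bounded complex of projectives, these $\Hom$-groups are the cohomology of the naive complexes $\Hom^{\bullet}(\kS, P_{\ast})$ and $\Hom^{\bullet}(P_{\ast}, \kS)$. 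By Definition~\ref{D:MinS}, the interior differentials of $\kS$ are forced to be right multiplication by $u$ and $v$ alternately (each step is the minimal element composable to zero with the previous one), so the interior of each $\Hom$-complex becomes a finite chunk of the alternating sequence displayed above, and is exact by the key identities. At the endpoints, the relations $\beta\gamma = 0$ and $\delta\alpha = 0$ together with the identifications recorded above convert the edge arrows $\alpha, \beta, \gamma, \delta$ into multiplication by $u$ or $v$ on the one hand, and into the natural projections $O \twoheadrightarrow O/(u)$ or $O \twoheadrightarrow O/(v)$ on the other; a short verification shows that each boundary differential precisely fills the kernel propagated by the adjacent interior differential, yielding acyclicity in every degree.

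Next I deduce indecomposability. By Lemma~\ref{L:Verdier}, the left-orthogonality just established implies that the canonical functor $\Hot^{b}(\Pro(A_{\mathsf{nd}})) \to \Delta_{\mathsf{nd}}$ induces an isomorphism
$$ \End_{\Hot^b(\Pro(A_{\mathsf{nd}}))}(\kS) \stackrel{\sim}\lar \End_{\Delta_{\mathsf{nd}}}(\kS).$$
A direct chain-level computation on the left-hand side --- parametrizing chain endomorphisms by a compatible family consisting of one element of $O$ at each interior position together with an element of each edge ring $\End(P_{\pm}) \cong \kk\llbracket t\rrbracket$ at the endpoints, and then checking that null-homotopies realize precisely every element of the maximal ideal $\gm \subset O$ in the interior components --- shows that $\End_{\Hot}(\kS) \cong \kk$. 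In particular $\End_{\Delta_{\mathsf{nd}}}(\kS)$ is a local ring, so $\kS$ is indecomposable in $\Delta_{\mathsf{nd}}$.

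The main obstacle will be the bookkeeping at the two boundaries of the $\Hom$-complexes: one must track how the relations $\beta\gamma = 0$ and $\delta\alpha = 0$ turn left- versus right-multiplication by the edge arrows into quotient or multiplication maps of the appropriate sort, and how the parity of $l$ selects the vertex $\sigma$ (and hence which of the two edge rings appears at the left endpoint). Once this is organized, every cohomology group in question collapses to an identity of the form $\ker(u\cdot)/\im(v\cdot) = 0$ or $\ker(v\cdot)/\im(u\cdot) = 0$ in $O$, and the acyclicity is immediate.
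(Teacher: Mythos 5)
Your proposal is correct and follows essentially the same route as the paper's proof: establish the two-sided orthogonality of $\kS$ to $\Hot^b\bigl(\add(P_{*})\bigr)$ and the isomorphism $\End_{\Hot^b(\Pro(A_{\mathsf{nd}}))}(\kS)\cong\kk$ by direct computation in the homotopy category, then transfer via Lemma~\ref{L:Verdier} to conclude indecomposability in $\Delta_{\mathsf{nd}}$. The only difference is that you spell out the computations (via $\Ann_O(u)=vO$, $\Ann_O(v)=uO$ in $O\cong e_{*}A_{\mathsf{nd}}e_{*}$) that the paper dismisses as ``easy to check''.
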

\begin{proof} First note that $\End_{\Hot^b(\Pro (A_{\mathsf{nd}}))}(\kS) \cong \kk$. In particular,
$\kS$ is indecomposable in $\Hot^b\bigl(\Pro (A_{\mathsf{nd}})\bigr)$. Next, it is easy to check that for all $m \in \ZZ$
\[\Hom_{\Hot^b(\Pro(A_{\mathsf{nd}}))}\bigl(P_{*}[m], \kS\bigr)=0=\Hom_{\Hot^b(\Pro(A_{\mathsf{nd}}))}\bigl(\kS, P_{*}[m]\bigr) \] holds.
Now, Verdier's Lemma \ref{L:Verdier} implies indecomposability of $\kS$ in $\Delta_{\mathsf{nd}}$.
\end{proof}

\begin{definition}
Let $\kT$ be an idempotent complete  triangulated category and $X_{1}, \cdots, X_{n} \in \mathsf{Ob}(\kT)$ an arbitrary collection
of objects. Then
 $\mathsf{Tria}(X_{1}, \cdots, X_{n}) \subseteq \kT$ is the smallest full triangulated subcategory
 of $\kT$ containing all $X_i$ and closed under taking direct summands.
\end{definition}

\begin{remark}\label{R:Tria}
The projective resolutions of the simple $A_{\mathsf{nd}}$--modules $S_{+}$ and $S_{-}$ are
$$
0 \rightarrow  P_{-} \stackrel{\cdot \beta}\longrightarrow  P_{*} \stackrel{\cdot \gamma}\longrightarrow  P_{+} \rightarrow S_{+} \rightarrow  0 \quad \text{and} \quad  0 \rightarrow  P_{+} \stackrel{\cdot \delta}\longrightarrow  P_{*} \stackrel{\cdot \alpha}\longrightarrow  P_{-} \rightarrow  S_{-} \rightarrow 0.
$$
Thus $S_{\pm} \cong \mathcal{S}_{\pm}(1)$ are minimal strings. Let $\rho, \sigma, \tau \in \{\li, \re\}$ and $l \in \mathbb{N}$. The cone of
$$
\begin{xy}\SelectTips{cm}{}
\xymatrix{
\kS_{\tau}(l)  &&&0 \ar[r]& P_{\sigma} \ar[r]^{\cdot d_{3}} \ar[d]^{\mathsf{id}}& P_{*} \ar[r]^{\cdot d_{4}} &\cdots \ar[r]^{\cdot d_{l+2}}& P_{*} \ar[r]^{\cdot d_{l+3}} & P_{\tau} \ar[r] & 0 \\
\kS_{\sigma}(1)[l+1] & 0 \ar[r] & P_{\rho} \ar[r]^{\cdot d_{1}} & P_{*} \ar[r]^{\cdot d_{2}} & P_{\sigma} \ar[r] & 0
} \end{xy}
$$ is isomorphic to the following minimal string
$$
\begin{xy}\SelectTips{cm}{}
\xymatrix{
\kS_{\tau}(l+1)[1] &0 \ar[r]& P_{\rho} \ar[r]^{\cdot d_{1}} & P_{*} \ar[r]^{\cdot d_{2}d_{3}} &\cdots \ar[r]^{\cdot d_{l+2}}& P_{*} \ar[r]^{\cdot d_{l+3}} & P_{\tau} \ar[r] & 0. &&&
}\end{xy}
$$
Hence, the minimal strings are generated by $S_{+}$ and $S_{-}$. In other words, $\kS_{\tau}(l)[n]$ is contained in  $\mathsf{Tria}(S_{+}, S_{-}) \subseteq D^b(A_{\mathsf{nd}}-\mod),$ for all $\tau \in \{\li, \re\}$, $l \in \mathbb{N}$ and $n \in \ZZ$.
\end{remark}

\begin{theorem}\label{T:MainT} We use the notations from above.

\noindent
(a) Let $X$ be an indecomposable complex in $\Hot^b\bigl(\Pro(A_{\mathsf{nd}})\bigr).$ Then the  image of $X$ in $\Delta_{\mathsf{nd}}$ is either zero or isomorphic to one of the following objects
 $$ P_{\sigma}[n] \oplus P_{\tau}[m], \quad P_{\tau}[n] \quad {or} \quad \kS_{\tau}(l)[n], \quad
\text{ where } m, n \in \mathbb{Z},  l \in \mathbb{N} \text{ and } \sigma, \tau \in \{+, -\}.
 $$

\noindent
(b) Let $\sigma, \tau \in \{+, -\}$ and $n \in \ZZ$. We have the following formula:
$$\Hom_{\Delta_{\mathsf{nd}}}\bigl(P_{\sigma}, P_{\tau}[n]\bigr) \cong \begin{cases} \kk \quad \text{  if }n \in 2\ZZ_{\leq 0} \text{ and } \sigma=\tau, \\ \kk \quad \text{  if }n \in 2\ZZ_{\leq 0}-1 \text{ and }\sigma \ne \tau, \\ 0 \quad \text{  otherwise.}\end{cases}$$
In particular, $P_{\sigma}[n]$ is indecomposable in $\Delta_{\mathsf{nd}}$ for any $\sigma \in
\{+, -\}$ and $n \in \ZZ$.

\noindent
(c) Two objects from the set $\bigl\{P_{\sigma}[n], \kS_{\tau}(l)[m] \, \big| \, \sigma, \tau \in \{+, -\}, \, n, m \in \ZZ, \, l \in \mathbb{N}\bigr\}$ are isomorphic  in $\Delta_{\mathsf{nd}}$ if and only if their discrete parameters coincide.
\end{theorem}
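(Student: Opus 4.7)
My plan is to prove the three parts in the order (b), (c), (a). The main tool throughout is Verdier's Lemma \ref{L:Verdier}, which combined with Lemma \ref{L:MinS} allows any $\Hom$-space involving a minimal string to be computed directly inside $\Hot^b(\Pro(A_{\mathsf{nd}}))$.

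For (b), I would first derive, from the projective resolutions $0 \to P_\mp \to P_\ast \to P_\pm \to S_\pm \to 0$, the distinguished triangles $P_\mp[1] \to P_\pm \to S_\pm \to P_\mp[2]$ in $\Delta_{\mathsf{nd}}$: indeed the truncation $K_\mp := [P_\mp \to P_\ast]$ (with $P_\ast$ in degree $0$) fits into the triangle $P_\ast \to K_\mp \to P_\mp[1] \to P_\ast[1]$ whose outer terms lie in $\Hot^b(\add(P_\ast))$, so $K_\mp \cong P_\mp[1]$ in $\Delta_{\mathsf{nd}}$. Next, Verdier's Lemma applied to $S_\pm = \kS_\pm(1)$ (both perpendicular conditions hold by Lemma \ref{L:MinS}) reduces $\Hom_{\Delta_{\mathsf{nd}}}(S_\pm, P_\tau[n])$ to a chain-map calculation in $\Hot^b(\Pro(A_{\mathsf{nd}}))$, which routinely yields vanishing except for $\Hom_{\Delta_{\mathsf{nd}}}(S_\pm, P_\mp[2]) \cong \kk$. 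The base case $\End_{\Delta_{\mathsf{nd}}}(P_\pm) \cong \kk$ follows because the loops $\delta\gamma$ and $\beta\alpha$ factor through $P_\ast$ and so vanish in $\Delta_{\mathsf{nd}}$. Applying $\Hom_{\Delta_{\mathsf{nd}}}(-,P_\tau[n])$ to the two triangles and substituting the $S_\pm$-Hom values into the resulting long exact sequences yields the recursive identity $\Hom_{\Delta_{\mathsf{nd}}}(P_\pm, P_\tau[n]) \cong \Hom_{\Delta_{\mathsf{nd}}}(P_\mp, P_\tau[n-1])$ outside the narrow range where the $S_\pm$-terms contribute, and the contributing cases are checked by hand. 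Induction from the base cases produces the claimed periodic pattern and, since $\End_{\Delta_{\mathsf{nd}}}(P_\pm[n]) \cong \kk$ is local, shows each $P_\pm[n]$ is indecomposable.

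Part (c) follows by analogous reasoning: both $\Hom_{\Delta_{\mathsf{nd}}}(\kS_\sigma(l)[n], \kS_\tau(l')[n'])$ and $\Hom_{\Delta_{\mathsf{nd}}}(\kS_\sigma(l)[n], P_\tau[n'])$ reduce via Lemma \ref{L:MinS} and Verdier's Lemma to their counterparts in $\Hot^b(\Pro(A_{\mathsf{nd}}))$, where a direct chain-map calculation shows the first is $\kk$ precisely when all discrete parameters coincide and vanishes otherwise, while the second is always small enough to rule out any isomorphism with a projective shift. Combined with (b), this distinguishes every pair of objects from the classification list.

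The most involved part is (a). Here I would employ a standard minimal-model reduction: by Gaussian elimination on each differential, any indecomposable $X \in \Hot^b(\Pro(A_{\mathsf{nd}}))$ is homotopy equivalent to a complex $X^{\min}$ whose differential entries all lie in the arrow ideal of $A_{\mathsf{nd}}$. The gentle structure of $A_{\mathsf{nd}}$, encoded by the relations $\delta\alpha = 0$ and $\beta\gamma = 0$, then strongly constrains the shape of $X^{\min}$: each $P_\pm$-summand has at most one outgoing (and one incoming) arrow, each $P_\ast$-summand has at most two outgoing arrows subject to the zero-composition relations, and every nonzero differential entry is a single arrow. A case analysis on the resulting zigzag shape yields the four possibilities listed in (a): pure $P_\ast$-complexes vanish in $\Delta_{\mathsf{nd}}$; a single non-$P_\ast$ summand flanked by $P_\ast$-chains that contract in the quotient yields $P_\tau[n]$; a connected zigzag between two non-$P_\ast$ endpoints through $P_\ast$-interior is precisely the minimal string shape of Definition \ref{D:MinS}, giving $\kS_\tau(l)[n]$; and a complex whose non-$P_\ast$ terms split into two disjoint zigzags linked only by $P_\ast$-bridges that contract in $\Delta_{\mathsf{nd}}$ yields $P_\sigma[n] \oplus P_\tau[m]$. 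The main obstacle is ruling out indecomposable band-type complexes coming from the completion $\widehat{A}_{\mathsf{nd}}$ whose image in $\Delta_{\mathsf{nd}}$ might fall outside the list; I expect this to be controlled by the Hom computations from (b)--(c) together with the idempotent completeness of $\Delta_{\mathsf{nd}}$ established in Theorem \ref{T:ResultsonVerdierQout}.
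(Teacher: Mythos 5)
Your overall architecture for (a) and (c) --- classify the indecomposables of $\Hot^b\bigl(\Pro(A_{\mathsf{nd}})\bigr)$, track their images, and compute $\Hom$-spaces via Lemma \ref{L:Verdier} applied to minimal strings --- matches the paper's, and your final case analysis of shapes (pure $P_*$-complexes die, non-linear orientations and non-minimal differentials split off $P_*$-shifts, linearly oriented minimal pieces give $\kS_\tau(l)[n]$) is essentially the paper's steps 1--4. However, you cannot obtain the input classification from ``Gaussian elimination plus the gentle relations.'' Your structural claims after passing to radical differentials are false: over $A_{\mathsf{nd}}$ the differential entries of an indecomposable string complex are arbitrary nontrivial paths (powers of the cycles $\ro\lu$ and $\lo\ru$), not single arrows, and a single homological degree is in general a direct sum of several projectives with several incoming and outgoing components --- see Example \ref{E:Grid}. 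What you are implicitly assuming is the full description of indecomposables of $\Hot^b\bigl(\Pro(A_{\mathsf{nd}})\bigr)$ as strings and bands, which is a substantial theorem of Burban--Drozd \cite{Nodal} proved by matrix-problem methods; the paper quotes it as the key external input. Your proposed a posteriori exclusion of band complexes via the Hom computations and idempotent completeness also does not work; the relevant fact from \cite{Nodal} is simply that band complexes are concentrated in $\add(P_*)$ and hence vanish in $\Delta_{\mathsf{nd}}$.

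The second gap is in (b): your inductive scheme has no anchor. The triangles $P_\mp[1]\to P_\pm\to S_\pm\to P_\mp[2]$ together with the groups $\Hom_{\Delta_{\mathsf{nd}}}\bigl(S_\pm[k],P_\tau[n]\bigr)$ (which are indeed computable by Lemma \ref{L:Verdier}) only produce the recursion $\Hom(P_\pm,P_\tau[n])\leftrightarrow\Hom(P_\mp,P_\tau[n-1])$, which runs unboundedly in both directions and never lands on a space you can evaluate directly; you need at least one genuine value, e.g.\ the vanishing for $n>0$ or $\End_{\Delta_{\mathsf{nd}}}(P_\pm)\cong\kk$. Your justification of the latter --- the loops factor through $P_*$ --- only shows that the \emph{image} of $\End_{\Hot^b(\Pro(A_{\mathsf{nd}}))}(P_\pm)$ in $\End_{\Delta_{\mathsf{nd}}}(P_\pm)$ is $\kk$. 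Since $P_\pm$ lies in neither perpendicular of $\Hot^b\bigl(\add(P_*)\bigr)$, Lemma \ref{L:Verdier} does not apply, the localization map need not be surjective, and a priori there could be extra endomorphisms represented by roofs $P_\pm\leftarrow Q\rightarrow P_\pm$ with $Q\not\cong P_\pm$. Bounding $\Hom_{\Delta_{\mathsf{nd}}}(P_\sigma,P_\tau[n])$ from above is precisely the hard content of part (b), and the paper does it by a seven-step normal-form reduction of roofs (discarding $P_*$-summands of $Q$, truncating, and minimalizing the differentials); some such direct analysis of fractions, or an equivalent explicit model of the quotient, cannot be avoided in your approach.
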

\begin{proof} Since $A_{\mathsf{nd}}$ is a \emph{nodal} algebra, by the  work  of Burban and Drozd
 \cite{Nodal} the indecomposable objects in $\Hot^b\bigl(\Pro(A_{\mathsf{nd}})\bigr)$ are explicitly known.
They are
\begin{itemize}
\item Band objects. These are contained in $\Hot^b\bigl(\add(P_{*})\bigr)$ and thus are zero  in $\Delta_{\mathsf{nd}}$.
\item String objects.
\end{itemize}
The string objects in $\Hot^b\bigl(\Pro(A_{\mathsf{nd}})\bigr)$ can be described in the following way. Let $\ZZ\vec{A}_{\infty}^{\infty}$ be the oriented graph obtained by orienting the edges in a $\ZZ^2$--grid as indicated in Example \ref{E:Grid} below.
Let $\vec{\theta} \subseteq \ZZ \vec{A}_{\infty}^{\infty}$ be a finite oriented subgraph of type $A_{n}$ for a certain $n \in \mathbb{N}$.
 Let $\Sigma$ and $T$ be the terminal vertices of $\vec\theta$ and $\sigma, \tau \in \{\li, *, \re\}$. We insert the projective
 modules  $P_{\sigma}$ and  $P_{\tau}$ at the vertices $\Sigma$ and $T$ respectively. Next, we plug
 in $P_{\ast}$ at all intermediate vertices of $\vec\theta$.
  Finally, we put maps (given by multiplication with non-trivial paths in $\vec{Q}_{\mathsf{nd}}$) on the arrows between the corresponding indecomposable projective modules. This has to be done
  in such a way that the composition of two subsequent arrows is always zero. Additionally, at the vertices
  where $\vec\theta$ changes orientation, the inserted paths  have to  be ``alternating'', i.e.~if one adjacent path
  involves $\alpha$ or $\beta$ then the second should involve $\gamma$ or $\delta$.
    Taking a direct sum of modules and maps in every  column of the constructed diagram,
     we get a complex of projective $A_{\mathsf{nd}}$--modules $\kS$, which we shall simply call \emph{string}.
\begin{example}\label{E:Grid}
\[
\begin{xy}\SelectTips{cm}{}
\xymatrix@!=1pc
{
& & \circ \ar[rd] & & P_{\li} \ar[rd]^{\cdot \lu(\ro\lu)^n} && \circ \\
\ar[ddd] &  \circ \ar[ru] \ar[rd]&&  \circ \ar[ru] \ar[rd] && P_{*} \ar[ru] \ar[rd]     \\
&&   \circ \ar[ru] \ar[rd] && P_{*} \ar[ru]^{\cdot(\lo\ru)^m} \ar[rd] && \circ  \\
&   \circ \ar[ru] \ar[rd]&&  P_{*} \ar[ru]^{\cdot(\ro\lu)^l} \ar[rd]^{\cdot(\lo\ru)^k} && \circ \ar[ru] \ar[rd]   \\
&& \circ \ar[ru]  & & P_{*} \ar[ru] &&\circ  \\
\kS = &\cdots \ar[r] &0\ar[r]&P_{*} \ar[r]^{d_{1}} & { \underset{\displaystyle P_{*}^{\oplus 2}}{\overset{\displaystyle P_{-}}{\oplus}}}\ar[r]^{d_{2}} &
P_{*} \ar[r] & 0 \ar[r] &\cdots }\end{xy}
\]
where $d_{1}=\begin{pmatrix} 0 & \cdot(\ro\lu)^l & \cdot(\lo\ru)^k\end{pmatrix}^{\mathsf{tr}}$ and $d_{2}=\begin{pmatrix} \cdot\lu(\ro\lu)^n & \cdot(\lo\ru)^m & 0 \end{pmatrix}.$
\end{example}
\noindent
Note, that the strings with $P_{\sigma}=P_{\tau}=P_{*}$ vanish in $\Delta_{\mathsf{nd}}$. Therefore,
in what follows we may and shall  assume that $\sigma$ or $\tau \in \{\li, \re\}$.

\medskip
\noindent
\textit{Proof of (a).}
Let $\kS \in {\rm Ob}\bigl(\Hot^b\bigl(\Pro(A_{\mathsf{nd}})\bigr)\bigr)$ be an indecomposable string as defined above.

\noindent
1. If $P_{\tau}=P_{*}$  and $ \vec\theta=\Sigma \rightarrow \cdots$ hold, then there exists a distinguished triangle
$$
\kS \stackrel{f}\lar   P_{\sigma}[n] \lar \mathsf{cone}(f) \lar \kS[1]
$$
with $\mathsf{cone}(f) \in {\rm Ob}\bigl(\Hot^b\bigl(\add(P_{*})\bigr)\bigr)$, yielding an isomorphism $\kS \cong P_{\sigma}[n]$ in $\Delta_{\mathsf{nd}}$. Similarly, if $\vec\theta = \Sigma \leftarrow \cdots$ holds, then we obtain a triangle
$
P_{\sigma}[n] \stackrel{f}\lar    \kS \lar \mathsf{cone}(f) \lar  P_{\sigma}[n+1]
$
with $\mathsf{cone}(f) \in \Ob\bigl(\Hot^b\bigl(\add(P_{*})\bigr)\bigr)$ and hence an isomorphism $\kS \cong P_{\sigma}[n]$ in $\Delta_{\mathsf{nd}}.$

\medskip
\noindent
2. We may assume that $\sigma, \tau \in \{\li, \re\}$. If the graph $\vec{\theta}$ defining
$\kS$ is \emph{not} linearly oriented (i.e.~contains a subgraph ($\star$) $\begin{xy}\SelectTips{cm}{}\xymatrix{\circ \ar[r] & \circ & \circ \ar[l]}\end{xy}$ or ($\star\star$) $\begin{xy}\SelectTips{cm}{}\xymatrix{\circ & \circ \ar[r] \ar[l] &\circ}\end{xy}$),
then there exists a distinguished triangle  of the following form
$$
\begin{xy}\SelectTips{cm}{}
\xymatrix{
(\star) & P_{*}[s] \ar[r] &\kS \ar[r]& \kS' \oplus \kS'' \ar[r]& P_{*}[s+1]&  \\
(\star\star) & P_{*}[s-1] \ar[r] &\kS' \oplus \kS'' \ar[r] & \kS \ar[r] & P_{*}[s] }\end{xy}$$
and therefore $\kS \cong \kS' \oplus \kS'' \cong P_{\sigma}[n] \oplus P_{\tau}[m]$ is decomposable in $\Delta_{\mathsf{nd}}$.

\medskip
\noindent
3. Hence, without loss of generality, we may assume
$\sigma, \tau \in \{\li, \re\}$ and $\vec{\theta}$ to be  linearly oriented.
If $\kS$ has a ``non-minimal'' differential $d=\cdot p$ (i.e. the path $p$ in $\vec{Q}_{\mathsf{nd}}$ contains $\ru \lo$ or $\lu \ro$ as a subpath), then we consider the following morphism of complexes
$$\begin{xy}\SelectTips{cm}{}\xymatrix{\kS' \ar[d]_f & P_{\sigma} \ar[r] &P_{*} \ar[r] & \cdots \ar[r] & P_{*}  \ar[d]^d\\
\kS'' &&&& P_{*} \ar[r] & P_{*} \ar[r] & \cdots \ar[r] &P_{*} \ar[r] & P_{\tau}}\end{xy}$$
which can be  completed to a distinguished triangle in $\Hot^b\bigl(\Pro(A_{\mathsf{nd}})\bigr)$
$$
\kS' \stackrel{f}\lar  \kS'' \lar \kS \lar \kS'[1].
$$ By our assumption on $d$,  the morphism $f$ factors through $P_{*}[s]$ for some $s \in \ZZ$ and therefore vanishes in $\Delta_{\mathsf{nd}}$. Hence, we have  a decomposition $\kS \cong \kS'[1] \oplus \kS'' \cong P_{\sigma}[n] \oplus P_{\tau}[m]$ in $\Delta_{\mathsf{nd}}$.

\medskip
\noindent
4. If $\sigma, \tau \in \{\li, \re\}$, $\vec{\theta}$ is linearly oriented and $\kS$ has only minimal differentials, then $\kS$ is a minimal string. This  concludes the proof of part (a) of Theorem \ref{T:MainT}.

\noindent
\textit{Proof of (b).} Every morphism $P_{\sigma} \rightarrow P_{\tau}[n]$ in $\Delta_{\mathsf{nd}}$ is given by a roof $P_{\sigma} \xleftarrow{f} Q \xrightarrow{g} P_{\tau}[n]$, where $f, g$ are morphisms in $\Hot^b\bigl(\Pro(A_{\mathsf{nd}})\bigr)$ and $\mathsf{cone}(f) \in {\rm Ob}\bigl(\Hot^b\bigl(\add (P_{*})\bigr)\bigr)$. By a common abuse of terminology, we call $f$ a \emph{quasi-isomorphism}. Our aim is to find a convenient representative in each equivalence class of roofs. It turns out that $\sigma \in \{+, -\}$ and $n \in \ZZ$ determine $Q$ and $f$ of our representative and $g$ is either $0$ or determined by $\tau$ up to  scalar.

\medskip
\noindent
1. Without loss of generality, we may assume that $Q$ has no direct summands from
 $\Hot^b\bigl(\add (P_{*})\bigr)$. Indeed, if
 $Q \cong Q' \oplus Q''$ with $Q'' \in \Ob\bigl(\Hot^b\bigl(\add (P_{*})\bigr)\bigr)$, then the diagram
$$
\begin{xy}\SelectTips{cm}{}
\xymatrix{&&Q' \ar[rrd]^{g'} \ar[lld]_{f'} \ar[d]^{\left(\bsm \mathsf{id} \\ 0 \esm\right)} \\
P_{\sigma} && Q' \oplus Q'' \ar[ll]_{f=\left(\bsm f' & f'' \esm\right)} \ar[rr]^{g=\left(\bsm g' & g'' \esm\right)} && P_{\tau}[n]}
\end{xy}
$$
yields  an equivalence of roofs $P_{\sigma} \xleftarrow{f} Q \xrightarrow{g} P_{\tau}[n]$ and
$P_{\sigma} \xleftarrow{f'} Q' \xrightarrow{g'} P_{\tau}[n]$.

\medskip
\noindent
2. Using our assumptions on $f$ and $Q$ in conjunction with the description of indecomposable strings in $\Hot^b\bigl(\Pro(A_{\mathsf{nd}})\bigr)$, it is not difficult to see that  $Q$  can (without restriction) taken to be an indecomposable string with $\tau=*$ and $f$ to be of the following form:
$$
\begin{xy}\SelectTips{cm}{}
\xymatrix@R=0.3pc{
&\cdots \ar[r] & P_{*}  \ar[r]& P_{*} \ar[r] & P_{*} \ar[rdd] \\ Q \ar[ddd]_f \\
&P_{\sigma} \ar[dd]_{\mathsf{id}} \ar[r] & P_{*} \ar[r] & \cdots \ar[r] & P_{*} \ar[r] & P_{*} \\ \\
P_{\sigma }&P_{\sigma}}
\end{xy}
$$

\noindent
3.  Without loss of generality, we may assume that $Q$ is constructed from a linearly oriented graph $\vec{\theta}$. Indeed, otherwise we may consider the truncated complex $Q^{\leq}$ defined in the diagram below and replace our roof by an equivalent one.
$$
\begin{xy}\SelectTips{cm}{}
\xymatrix@R=0.3pc{
&\cdots \ar[r]& P_{*} \ar[r]& P_{*} \ar[r] & P_{*} \ar[rdd] \\
Q \\
&P_{\sigma} \ar[r]^{d_{1}} & P_{*} \ar[r]^{d_{2}} & \cdots \ar[r]^{d_{n-1}} & P_{*} \ar[r]^{d_{n}} & P_{*} \\ \\
Q^{\leq} \ar[uuu]^q&P_{\sigma} \ar[uu]^{\mathsf{id}}\ar[r]^{d_{1}} & P_{*} \ar[r]^{d_{2}} \ar[uu]^{\mathsf{id}} & \cdots \ar[r]^{d_{n-1}} & P_{*} \ar[r]^{d_{n}} \ar[uu]^{\mathsf{id}} & P_{*} \ar[uu]^{\mathsf{id}} }
\end{xy}
$$
$$
\begin{xy}\SelectTips{cm}{}
\xymatrix{&&Q^\leq \ar[rrd]^{gq} \ar[lld]_{fq} \ar[d]^{q} \\
P_{\sigma} &&Q \ar[ll]_{f} \ar[rr]^{g} && P_{\tau}[n]}
\end{xy}
$$
In particular,  $n >0 $ implies that $\Hom_{\Delta_{\mathsf{nd}}}\bigl(P_{\sigma}, P_{\tau}[n]\bigr)=0$ holds.

\medskip
\noindent
4. By the above reductions, $g$ has the following form:
$$
\begin{xy}\SelectTips{cm}{}
\xymatrix{
Q \ar[d]^g &  P_{\sigma} \ar[r] & P_{*} \ar[r] & \cdots \ar[r] & P_{*} \ar[r] & P_{*} \ar[r]\ar[d]^{g} & P_{*} \ar[r] & \cdots \\
P_{\tau}[n] & &&&& P_{\tau}
}\end{xy}
$$
We may truncate again so that $Q$ ends at degree $-n$:
$$
\begin{xy}\SelectTips{cm}{}
\xymatrix{&&Q^{\leq -n} \ar[rrd] \ar[lld]  \\
P_{\sigma} &&Q \ar[ll] \ar[rr] \ar[u]&& P_{\tau}[n]}
\end{xy}
$$

\noindent
5. Next, we may  assume that $Q$ has minimal differentials (see  Definition \ref{D:MinS}) and thus is  uniquely determined by $\sigma$ and $n$. Indeed, otherwise there exists  a quasi-isomorphism:
$$
\begin{xy}\SelectTips{cm}{}
\xymatrix{
Q'  \ar[d]^q & P_{\sigma} \ar[r] \ar[d]^{\mathsf{id}}& P_{*} \ar[r] \ar[d]^{\mathsf{id}} & \cdots \ar[r] &P_{*}  \ar[r]^{d'} \ar[d]^{\mathsf{id}}& P_{*}  \ar[d]^{d''}\ar[r] &0 \ar[d]  \\
Q & P_{\sigma} \ar[r] & P_{*} \ar[r] & \cdots \ar[r] & P_{*} \ar[r]^d & P_* \ar[r] & P_* \ar[r] & \cdots
}\end{xy}
$$

\noindent
6. Summing up, our initial roof can be replaced by an equivalent one  of  the following form
$$
\begin{xy}\SelectTips{cm}{}
\xymatrix{
P_{\sigma} & P_{\sigma} \\
Q \ar[u]^f \ar[d]_g & P_{\sigma} \ar[u]^{\mathsf{id}} \ar[r] & P_{*} \ar[r] & \cdots \ar[r] &P_{*} \ar[d]_g \\
P_{\tau}[n] & &&& P_{\tau}
}
\end{xy}
$$
If $g$ is not minimal (i.e.~not given by multiplication with a single arrow), then it factors over $P_{*}[n]$ and therefore vanishes in $\Delta_{\mathsf{nd}}$. Thus the morphism space $\Hom_{\Delta_{\mathsf{nd}}}(P_{\sigma}, P_{\tau}[n])$ is at most one dimensional. Moreover, $g$ can be non-zero only if $n$ has the right parity.

\medskip
\noindent
7. Consider a roof $P_{\sigma} \xleftarrow{f} Q \xrightarrow{g} P_{\tau}[n]$
as in the previous step and assume that $g$ is non-zero and minimal. We want to show that the roof defines a non-zero homomorphism in $\Delta_{\mathsf{nd}}$. We have a triangle
$ Q \stackrel{g}\rightarrow  P_{\tau}[n] \rightarrow  \String{\tau}{-n}{n} \rightarrow Q[1]$ in $\Hot^b\bigl(A_{\mathsf{nd}}-\mod\bigr)$ yielding a triangle
$P_{\sigma} \rightarrow  P_{\tau}[n] \rightarrow  \String{\tau}{-n}{n} \rightarrow P_{\sigma}[1]$ in $\Delta_{\mathsf{nd}}$. Since $\String{\tau}{-n}{n}$ is indecomposable,  the map is non-zero. The claim follows.

\medskip
\noindent
\textit{Proof of (c).}
 Note that for  $X \in \Ob\bigl(\mathsf{Tria}(S_{+}, S_{-})\bigr)$ we have $[X] =   n \cdot  \bigl([P_{+}]+[P_{-}]\bigr) \in K_{0}(\Delta_{\mathsf{nd}})$ for a certain $n \in \mathbb{Z}$. Thus, the images of indecomposable projective $A_{\mathsf{nd}}$--modules $P_{+}$ and $P_{-}$ are not contained in $\mathsf{Tria}(S_{+}, S_{-})$.
 By Lemma \ref{L:Verdier} and the classification of indecomposable strings in $\Hot^b\bigl(\Pro(A_{\mathsf{nd}})\bigr)$, it remains to show that $P_{\sigma}[n] \cong P_{\tau}[m]$ implies $\sigma=\tau$ and $n=m$. Assume that $n>m$ holds. Then using Lemma \ref{L:Verdier} again, we obtain
$$\Hom_{\Delta_{\mathsf{nd}}}\bigl(P_{\sigma}[n], \String{\sigma}{1}{n}\bigr)\cong \kk \ne 0 =\Hom_{\Delta_{\mathsf{nd}}}\bigl(P_{\tau}[m], \String{\sigma}{1}{n}\bigr).$$ This is a contradiction. Similarly, the assumption $\sigma \ne \tau$ leads to a contradiction.
\end{proof}

\begin{remark}
Theorem \ref{T:MainT} and Lemma \ref{L:Verdier} reduce the computation of morphism spaces in $\Delta_{\mathsf{nd}}$ to a computation in $\Hot^b\bigl(\Pro(A_{\mathsf{nd}})\bigr)$.
Moreover, every minimal string may be presented as a cone of a morphism $P_{\sigma}[n] \rightarrow P_{\tau}[m]$ in $\Delta_{\mathsf{nd}}$ (see step 7 in the proof of Theorem \ref{T:MainT} (b)). Using this fact and the long exact $\Hom$-sequence, one can show that
$\dim_{\kk}\Hom_{\Delta_{\mathsf{nd}}}(X, Y) \leq 1$ holds for all indecomposable objects $X$ and  $Y$  in  $\Delta_{\mathsf{nd}}$.
\end{remark}

\begin{corollary}\label{C:indinTria}
The indecomposable objects of the triangulated subcategory $\mathsf{Tria}(S_-, S_+) \subset
D^b\bigl(A_{\mathsf{nd}}-\mod\bigr)$ are precisely the shifts of the minimal strings $\kS_\tau(l)$.
\end{corollary}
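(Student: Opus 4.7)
The plan is to exploit the localization functor $\PP\colon D^b(A_{\mathsf{nd}}-\mod)\to \Delta_{\mathsf{nd}}$ together with the description of indecomposables in $\Delta_{\mathsf{nd}}$ already obtained in Theorem \ref{T:MainT}. One inclusion is essentially already recorded: by Remark \ref{R:Tria}, every shift $\kS_\tau(l)[n]$ can be built from $S_\pm=\kS_\pm(1)$ by iterated cones and shifts, hence lies in $\mathsf{Tria}(S_-,S_+)$. Moreover, as noted at the beginning of the proof of Lemma \ref{L:MinS}, every minimal string has endomorphism ring $\kk$ inside $\Hot^b\bigl(\Pro(A_{\mathsf{nd}})\bigr)$, so its shifts are indecomposable already in the ambient derived category (indecomposability of an object is inherited from any category in which it sits).

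For the converse inclusion, I would first establish that $\PP$ is fully faithful on $\mathsf{Tria}(S_-,S_+)$. Lemma \ref{L:MinS} places $S_\pm=\kS_\pm(1)$ inside ${}^{\perp}\Hot^b\bigl(\add(P_\ast)\bigr)$, and this left orthogonal is itself a triangulated subcategory closed under direct summands; hence it contains the whole of $\mathsf{Tria}(S_-,S_+)$. Lemma \ref{L:Verdier} then yields full faithfulness of $\PP$ there. Now let $X$ be indecomposable in $\mathsf{Tria}(S_-,S_+)$. Full faithfulness gives $\End_{\Delta_{\mathsf{nd}}}(\PP(X))\cong \End(X)$, so $\PP(X)$ is either zero or indecomposable with local endomorphism ring. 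The zero case is excluded, since it would force $X$ into both $\Hot^b\bigl(\add(P_\ast)\bigr)$ and its left orthogonal and hence $\End(X)=0$.

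Theorem \ref{T:MainT}(a) therefore confines $\PP(X)$ to $P_\sigma[n]$ or $\kS_\tau(l)[n]$. A brief K-theoretic bookkeeping discards the first option: from the resolutions in Remark \ref{R:Tria} combined with $[P_\ast]=0$ in $K_0(\Delta_{\mathsf{nd}})$, one obtains $[S_\pm]=[P_+]+[P_-]$, so every class in $\PP\bigl(\mathsf{Tria}(S_-,S_+)\bigr)$ is a multiple of $[P_+]+[P_-]$, whereas $[P_\sigma[n]]=(-1)^n[P_\sigma]$ is not such a multiple inside $K_0(\Delta_{\mathsf{nd}})\cong \ZZ^2$. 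Consequently $\PP(X)\cong \kS_\tau(l)[n]$, and since both $X$ and $\kS_\tau(l)[n]$ sit inside the subcategory on which $\PP$ is fully faithful, the isomorphism lifts back to $D^b(A_{\mathsf{nd}}-\mod)$. The only delicate point is the containment $\mathsf{Tria}(S_-,S_+)\subseteq {}^{\perp}\Hot^b\bigl(\add(P_\ast)\bigr)$, but this is immediate from Lemma \ref{L:MinS} together with the routine closure properties of left orthogonals under shifts, cones and summands; the rest of the argument is pure bookkeeping on top of Theorem \ref{T:MainT}.
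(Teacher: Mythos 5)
Your proposal is correct and follows essentially the same route as the paper's proof: one inclusion via Remark \ref{R:Tria}, full faithfulness of the localization on $\mathsf{Tria}(S_-,S_+)$ via Lemmas \ref{L:Verdier} and \ref{L:MinS}, and then the classification of indecomposables in $\Delta_{\mathsf{nd}}$ from Theorem \ref{T:MainT}. The K-theoretic exclusion of the objects $P_\sigma[n]$ that you spell out is exactly the argument the paper uses in the proof of Theorem \ref{T:MainT}(c) and then cites here.
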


\begin{proof}
By Remark \ref{R:Tria}, we know that all minimal strings belong to $\mathsf{Tria}(S_{-}, S_+)$.
Hence, we just have to prove that there are no other indecomposable objects.
According to Lemma \ref{L:Verdier} and  Lemma \ref{L:MinS},
the functor $\mathsf{Tria}(S_{-}, S_+) \rightarrow \Delta_{\mathsf{nd}}$ is fully faithful.
Therefore, the indecomposable objects of the category $\mathsf{Tria}(S_{-}, S_+)$ and its essential
image in $\Delta_{\mathsf{nd}}$
are the same. By Theorem \ref{T:MainT}, all indecomposable objects of $\Delta_{\mathsf{nd}}$ are known and
the shifts of the objects  $P_{+}$ and $P_{-}$ are not contained in $\mathsf{Tria}(S_{-}, S_{+})$. Hence,
the minimal strings are the only indecomposable objects of $\mathsf{Tria}(S_{-}, S_+)$.
\end{proof}

\section{Connection with the category $\bigl(D^b(\Lambda-\mod)/\Band(\Lambda)\bigr)^\omega$}\label{S:Quivers}
\noindent
Let  $\Lambda$  be the path algebra of the following quiver with relations
\begin{equation}\label{E:gentlequiver}
\begin{xy}\SelectTips{cm}{}
\xymatrix
{
1 \ar@/^/[rr]^{a} \ar@/_/[rr]_{c} & & 2 \ar@/^/[rr]^{b} \ar@/_/[rr]_{d} & & 3
}
\end{xy}
\qquad ba = 0,  \quad dc =  0
\end{equation}
and  $\Band(\Lambda)$ be the full subcategory of $D^b(\Lambda-\mod)$ consisting of those objects, which are
invariant under the Auslander--Reiten translation in $D^b(\Lambda-\mod)$. By \cite[Corollary 6]{Tilting},
the subcategory $\Band(\Lambda)$ is triangulated.
Hence, we can define the triangulated category
$$\widetilde{\Delta}_{\mathsf{nd}} := \bigl(D^b(\Lambda-\mod)/\Band(\Lambda)\bigr)^\omega,$$
i.e.~the idempotent completion of the Verdier quotient $D^b(\Lambda-\mod)/\Band(\Lambda)$ (see \cite{BalmerSchlichting}).
The main goal of this section is to show that $\widetilde{\Delta}_{\mathsf{nd}}$ and  $\Delta_{\mathsf{nd}}$ are triangle equivalent.
\begin{lemma}
The indecomposable projective $\Lambda$--modules are pairwise isomorphic in $\widetilde{\Delta}_{\mathsf{nd}}.$
\end{lemma}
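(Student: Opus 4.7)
The plan is to identify, for each adjacent pair $\{P_1, P_2\}$ and $\{P_2, P_3\}$, a one-parameter family of band modules whose minimal projective resolutions are two-term and involve exactly the two corresponding indecomposable projectives.

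Concretely, for each $\lambda \in \kk^{\ast}$, let $M_\lambda$ denote the $\Lambda$-representation with dimension vector $(1,1,0)$ in which the arrows $a$ and $c$ act as $1$ and $\lambda$ respectively (all remaining arrows being forced to act as zero). Since $ba = 0$ and $dc = 0$, a short direct inspection yields the two-term minimal projective resolution
$$
0 \lar P_2 \stackrel{\cdot(c - \lambda a)}\lar P_1 \lar M_\lambda \lar 0,
$$
where the first arrow sends $e_2$ to $c - \lambda a \in P_1$.

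The crucial step is to verify that $M_\lambda \in \Band(\Lambda)$. Since $\gldim(\Lambda) = 2$, the Serre functor on $D^b(\Lambda-\mod)$ is $\nu = - \stackrel{\LL}\otimes_\Lambda D\Lambda$ and the Auslander--Reiten translate is $\tau = \nu[-1]$. Applying $\nu$ to the above resolution produces the complex $[I_2 \to I_1]$ with $I_2$ in degree $-1$ and differential the map dual to $c - \lambda a$. Using the explicit descriptions $I_1 = S_1$ and $I_2$ (the representation $\kk^2 \rightrightarrows \kk$ with coordinate projections), one computes $H^{-1} \cong M_\lambda$ and $H^0 = 0$, so that $\nu M_\lambda \cong M_\lambda[1]$ and consequently $\tau M_\lambda \cong M_\lambda$. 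Hence $M_\lambda$ is indeed a band object.

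The short exact sequence above yields a distinguished triangle $P_2 \to P_1 \to M_\lambda \to P_2[1]$ in $D^b(\Lambda-\mod)$, which descends to an isomorphism $P_2 \cong P_1$ in $D^b(\Lambda-\mod)/\Band(\Lambda)$, and hence in $\widetilde{\Delta}_{\mathsf{nd}}$. Repeating the construction with the symmetric sub-quiver $2 \rightrightarrows 3$ produces band modules $N_\mu$ (for $\mu \in \kk^{\ast}$) with resolution $0 \to P_3 \to P_2 \to N_\mu \to 0$, and therefore $P_3 \cong P_2$ in $\widetilde{\Delta}_{\mathsf{nd}}$. The main technical point is the Nakayama functor calculation establishing $\tau$-invariance of the $M_\lambda$ and $N_\mu$; once this is carried out, the remaining assertions are formal.
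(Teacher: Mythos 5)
Your argument is correct and is essentially the paper's own proof: the paper completes the short exact sequences $0 \to P_2 \to P_1 \to M \to 0$ and $0 \to P_3 \to P_2 \to N \to 0$ (with $M$, $N$ your modules at $\lambda=\mu=1$) to triangles and observes that $M$, $N$ are bands. You additionally carry out the Nakayama-functor computation verifying $\tau M_\lambda \cong M_\lambda$, a point the paper simply asserts, and your verification is correct.
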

\begin{proof} Complete the following exact sequences of $\Lambda$--modules
\begin{align*}
0 \longrightarrow P_{2} \longrightarrow P_{1} \longrightarrow \left(\begin{xy}\SelectTips{cm}{}\xymatrix{\kk \ar@/^/[r]^1 \ar@/_/[r]_1 & \kk \ar@/^/[r] \ar@/_/[r] &0}\end{xy}\right) \longrightarrow 0 \\
0 \longrightarrow P_{3} \longrightarrow P_{2} \longrightarrow \left(\begin{xy}\SelectTips{cm}{}\xymatrix{0 \ar@/^/[r] \ar@/_/[r] & \kk \ar@/^/[r]^1 \ar@/_/[r]_{1} & \kk}\end{xy}\right) \longrightarrow 0
\end{align*}
to triangles in $D^b(\Lambda-\mod)$ and note that the modules on the right-hand side  are bands.
\end{proof}
\noindent
Let $P \in \Ob\!\left(\widetilde{\Delta}_{\mathsf{nd}}\right)$ be the common image of the indecomposable projective $\Lambda$--modules.
\begin{lemma}
The endomorphisms of $P$, which are given by the roofs
\begin{equation}\label{E:Idempotents}
e_{+}= P_{1} \xleftarrow{\cdot(a+c)} P_{2} \xrightarrow{\cdot a} P_{1} \quad  \text{     and     } \quad e_{-}=P_{1} \xleftarrow{\cdot(a+c)}
P_{2} \xrightarrow{\cdot c} P_{1}
\end{equation}
satisfy $e_{-}e_{+}=0=e_{+}e_{-}$ and $e_{-}+e_{+}=\mathsf{id}_{P}$ and thus are \emph{idempotent}.
In particular, we have a direct sum decomposition $P \cong P^+ \oplus P^-$, where $P^{+}=(P, e_{+})$ and $P^{-}=(P, e_{-})$.
\end{lemma}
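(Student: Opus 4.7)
The plan is to reduce everything to showing that $f := \cdot(a+c)\colon P_2 \to P_1$ becomes an isomorphism in $\widetilde{\Delta}_{\mathsf{nd}}$, then to verify $e_+ + e_- = \mathrm{id}_P$ and $e_+^2 = e_+$ directly on the level of roofs; the orthogonality relations and the idempotency of $e_-$ will then follow formally from these two facts, and the splitting $P \cong P^+ \oplus P^-$ will be automatic from idempotent completeness.

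The first step I would carry out is to check that the cone of $f$ lies in $\Band(\Lambda)$. The map $f$ is injective, and its cokernel has dimension vector $(1,1,0)$ concentrated on the Kronecker sub-quiver $1 \rightrightarrows 2$, with $a$ acting as $+1$ and $c$ as $-1$. This is a regular Kronecker representation, hence a band module for $\Lambda$; in particular it is AR-translation invariant. An identical argument shows that $g := \cdot(b+d)\colon P_3 \to P_2$ is a quasi-isomorphism, its cokernel being a regular representation on the Kronecker sub-quiver $2 \rightrightarrows 3$.

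With $f$ inverted in $\widetilde{\Delta}_{\mathsf{nd}}$, the sum $e_+ + e_-$ is represented by the roof $P_1 \xleftarrow{f} P_2 \xrightarrow{\cdot a + \cdot c} P_1$; but $\cdot a + \cdot c = \cdot(a+c) = f$, so this equals $f \circ f^{-1} = \mathrm{id}_P$. To compute $e_+^2$, I would form a common refinement of the two $e_+$-roofs over the middle copy of $P_1$. Taking $W = P_3$ together with $u := \cdot(b+d)\colon P_3 \to P_2$ and $v := \cdot d\colon P_3 \to P_2$, the relations $ba = 0 = dc$ give the required compatibility $(\cdot a)\circ u = \cdot da = f \circ v$. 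The resulting composition is the roof $P_1 \xleftarrow{\cdot(bc+da)} P_3 \xrightarrow{\cdot da} P_1$, which is equivalent to the $e_+$-roof via the refinement $P_3 \xrightarrow{\cdot(b+d)} P_2$ paired with $\mathrm{id}_{P_3}$, since $f \circ \cdot(b+d) = \cdot(bc+da)$ and $(\cdot a)\circ \cdot(b+d) = \cdot da$. Thus $e_+^2 = e_+$.

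From $e_+ + e_- = \mathrm{id}_P$ and $e_+^2 = e_+$ one obtains $e_+ e_- = e_+(\mathrm{id}_P - e_+) = 0$, and symmetrically $e_- e_+ = 0$ and $e_-^2 = e_-$. Since $\widetilde{\Delta}_{\mathsf{nd}}$ is idempotent complete by \cite{BalmerSchlichting}, the splitting $P \cong (P, e_+) \oplus (P, e_-) = P^+ \oplus P^-$ follows immediately. The main obstacle is the identification of the cokernels of $\cdot(a+c)$ and $\cdot(b+d)$ as band modules in the sense of the paper, i.e.\ as AR-translation-invariant objects of $D^b(\Lambda-\mod)$; this ultimately rests on the explicit string--band description for the gentle algebra $\Lambda$ from \cite[Section~7]{Tilting}.
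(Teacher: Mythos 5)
Your proof is correct and follows essentially the same route as the paper: both compute the composite roof through the common refinement $P_{3}\xrightarrow{\cdot(b+d)}P_{2}$, $P_{3}\xrightarrow{\cdot d}P_{2}$ using the relations $ba=0=dc$, the only (cosmetic) difference being that you verify $e_{+}^{2}=e_{+}$ directly and deduce orthogonality, whereas the paper verifies $e_{+}e_{-}=0=e_{-}e_{+}$ directly and deduces idempotency. Your explicit identification of the cokernels of $\cdot(a+c)$ and $\cdot(b+d)$ as band modules is a welcome addition, since the paper relies on the preceding lemma for the well-definedness of the roofs.
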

\begin{proof} It is clear that $e_{-}+e_{+}=\mathsf{id}_{P}$. The equality $e_{+}e_{-}=0$ follows
from the  diagram
\begin{align*}
e_{+}e_{-}=\begin{split}\begin{xy}\SelectTips{cm}{}\xymatrix{&&P_{3} \ar[rd]^{\cdot d} \ar[ld]_{\cdot(b+d)} \\
                              &P_{2} \ar[rd]^(.44){\cdot a} \ar[ld]_{\cdot(a+c)}&& P_{2} \ar[rd]^{\cdot c} \ar[ld]_{\cdot(a+c)} \\
                              P_{1}&&P_{1}&&P_{1}}\end{xy}\end{split}=\, \, P_{1} \xleftarrow{\cdot(da+bc)} P_{3} \xrightarrow{0} P_{1}.
\end{align*}
The second equality $e_{-}e_{+}=0$ follows from  a similar calculation. Hence,  $e_{\pm}^2 = e_{\pm}$.
\end{proof}

\noindent
Next, note the following easy but useful result.
\begin{lemma}\label{L:fractCY}
Let $\kA$ be an abelian category and let $\mathbb{S}\colon D^b(\kA) \rightarrow D^b(\kA)$ be a triangle equivalence. If $X_{1}, X_{2} \in {\rm Ob}\bigl(D^b(\kA)\bigr)$ and $n_{1}, n_{2}, m_{1}, m_{2} \in \ZZ$ satisfy
$$ \mathbb{S}^{m_{1}}X_{1} \cong X_{1}[n_{1}], \quad  \quad \mathbb{S}^{m_{2}}X_{2} \cong X_{2}[n_{2}]
 \quad \text{ and  } \quad  d=m_{1}n_{2}-m_{2}n_{1} \ne 0,$$ then
$\Hom_{D^b(\kA)}(X_{1}, X_{2}) = 0 = \Hom_{D^b(\kA)}(X_{2}, X_{1}).$
\end{lemma}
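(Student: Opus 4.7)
The plan is to turn the two periodicities into an isomorphism $\Hom(X_1,X_2)\cong\Hom(X_1,X_2[d])$ and then invoke the standard vanishing of $\Hom$'s between bounded complexes under sufficiently negative shift.

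First I would note that, since $\mathbb{S}$ is a triangle equivalence, $\mathbb{S}^N$ induces a bijection $\Hom_{D^b(\kA)}(X_1,X_2)\to\Hom_{D^b(\kA)}(\mathbb{S}^NX_1,\mathbb{S}^NX_2)$ for every $N\in\ZZ$. Take $N=m_1m_2$. The given isomorphisms iterate (for negative exponents one uses the inverse autoequivalence $\mathbb{S}^{-1}$) to give
\[
\mathbb{S}^{m_1m_2}X_1\;\cong\;X_1[m_2n_1],\qquad \mathbb{S}^{m_1m_2}X_2\;\cong\;X_2[m_1n_2].
\]
Combining these with the bijection above and using that $\mathbb{S}$ commutes with $[1]$ yields
\[
\Hom_{D^b(\kA)}(X_1,X_2)\;\cong\;\Hom_{D^b(\kA)}\bigl(X_1[m_2n_1],X_2[m_1n_2]\bigr)\;\cong\;\Hom_{D^b(\kA)}(X_1,X_2[d]),
\]
with $d=m_1n_2-m_2n_1$. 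Iterating this isomorphism one obtains $\Hom_{D^b(\kA)}(X_1,X_2)\cong\Hom_{D^b(\kA)}(X_1,X_2[kd])$ for every $k\in\ZZ$.

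Next I would appeal to the elementary fact that for any two bounded complexes $Y_1,Y_2\in D^b(\kA)$ one has $\Hom_{D^b(\kA)}(Y_1,Y_2[n])=0$ whenever $n<a_{Y_2}-b_{Y_1}$, where $a_{Y_2}:=\min\{i\mid H^i(Y_2)\neq 0\}$ and $b_{Y_1}:=\max\{i\mid H^i(Y_1)\neq 0\}$. This follows from the standard truncation argument for the natural $t$-structure on $D^b(\kA)$: under the stated inequality $Y_1\in D^{\leq b_{Y_1}}$ and $Y_2[n]\in D^{\geq a_{Y_2}-n}$ have disjoint weights. Since $d\neq 0$, the sequence $kd$ is unbounded below as $k\to-\infty$ (or $k\to+\infty$, depending on the sign of $d$), so we can choose $k$ with $kd<a_{X_2}-b_{X_1}$, forcing $\Hom_{D^b(\kA)}(X_1,X_2[kd])=0$ and hence $\Hom_{D^b(\kA)}(X_1,X_2)=0$.

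The other vanishing $\Hom_{D^b(\kA)}(X_2,X_1)=0$ is obtained by exchanging the roles of $X_1$ and $X_2$; this only changes the sign of $d$, which is still nonzero. No step is really an obstacle here — the only thing to be a little careful about is handling the sign of $d$ and negative powers of $\mathbb{S}$ in passing from the hypothesis $\mathbb{S}^{m_i}X_i\cong X_i[n_i]$ to the iterated relation $\mathbb{S}^{m_1m_2}X_i\cong X_i[m_{3-i}n_i]$, which is a straightforward induction together with applying the inverse equivalence $\mathbb{S}^{-1}$.
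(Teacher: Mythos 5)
Your argument is correct and is essentially the paper's own proof: apply $\mathbb{S}^{m_1m_2}$, use that an equivalence is fully faithful and commutes with shifts to obtain $\Hom(X_1,X_2)\cong\Hom(X_1,X_2[kd])$ for all $k$, and conclude by boundedness of the complexes together with the vanishing of negative Ext--groups. The reduction to one vanishing by symmetry and the care with signs and negative powers of $\mathbb{S}$ match the paper's treatment.
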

\begin{proof} By the symmetry of the claim, it suffices to show that $\Hom_{D^b(\kA)}(X_{1}, X_{2})$ vanishes. Since $\mathbb{S}$ is an equivalence, we have a chain of isomorphisms
\begin{align*}
&\Hom_{D^b(\kA)}(X_{1}, X_{2}) \cong \Hom_{D^b(\kA)}\bigl(\mathbb{S}^{\pm m_{1}m_{2}}X_{1},\mathbb{S}^{\pm m_{1}m_{2}} X_{2}\bigr)\cong \\ & \Hom_{D^b(\kA)}\bigl(X_{1}[\pm m_{2}n_{1}], X_{2}[\pm m_{1}n_{2}]\bigr)
          \cong \Hom_{D^b(\kA)}\bigl(X_{1}, X_{2}[\pm d]\bigr)  \cong \Hom_{D^b(\kA)}\bigl(X_{1}, X_{2}[\pm kd]\bigr)
\end{align*}
for all $k \in \mathbb{N}$.  Hence, the claim follows from the boundedness of $X_{1}$ and $X_{2}$ together with the fact that there are no non-trivial Ext--groups $\Ext^{-n}_{\kA}(A_{1}, A_{2}) \cong \Hom_{D^b(\kA)}(A_{1}, A_{2}[-n])$, where $A_{1}, A_{2} \in {\rm Ob}(\kA)$ and $n$ is a positive integer.
\end{proof}
\noindent
A direct calculation in $D^b(\Lambda-\mod)$ yields the following result.
\begin{lemma}\label{L:Xplusminus}
Let $\mathbb{S}\colon D^b(\Lambda-\mod) \rightarrow D^b(\Lambda-\mod)$ be the Serre functor,
$$X_{+} =\begin{xy}\SelectTips{cm}{}\xymatrix{\kk \ar@/^/[r]^1 \ar@/_/[r]_0 & \kk \ar@/^/[r]^0 \ar@/_/[r]_{1} &\kk} \end{xy} \quad \text{ and } \quad
X_{-}=\begin{xy}\SelectTips{cm}{}\xymatrix{\kk \ar@/^/[r]^{0} \ar@/_/[r]_1 & \kk \ar@/^/[r]^1 \ar@/_/[r]_{0} &\kk}\end{xy}.$$
Then $\mathbb{S}(X_{\pm}) \cong X_{\mp}[2].$
In particular, $X_{\pm}$ are $\frac{4}{2}$-fractionally Calabi--Yau objects.
\end{lemma}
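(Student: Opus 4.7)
The algebra $\Lambda$ is a finite-dimensional $\kk$-algebra with $\gldim\Lambda = 2$, so the Serre functor of $D^b(\Lambda-\mod)$ exists and is given by the derived Nakayama functor $\mathbb{S} = D\Lambda \stackrel{\LL}\otimes_\Lambda -$, where $D = \Hom_\kk(-,\kk)$. It satisfies $\mathbb{S}(P_i) \cong I_i$ and, for any module $M$ admitting a projective resolution $P^\bullet \to M$, it is computed as $\nu(P^\bullet)$, where $\nu$ denotes the ordinary Nakayama functor. The plan is to compute $\mathbb{S}(X_+)$ by this recipe, identify the result with $X_-[2]$, and then invoke a quiver automorphism to treat $X_-$.

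The first step is to determine a minimal projective resolution of $X_+$. Since the top of $X_+$ is the simple $S_1$, the projective cover is $P_1 \twoheadrightarrow X_+$; inspecting the structure of $P_1$ (with $P_1(2) = \kk a \oplus \kk c$ and $P_1(3) = \kk(da) \oplus \kk(bc)$) and the arrow actions $a=1, c=0, b=0, d=1$ on $X_+$, one sees that the kernel is generated by $c$ and is isomorphic to $P_2/\langle d\rangle$. Resolving this further gives
\[
0 \lar P_3 \xrightarrow{\cdot d} P_2 \xrightarrow{\cdot c} P_1 \lar X_+ \lar 0.
\]
Applying $\nu$ produces a complex $I_3 \to I_2 \to I_1$ concentrated in degrees $-2,-1,0$ which represents $\mathbb{S}(X_+)$. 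Using the formula $\nu(f)(\phi) = \phi\circ(p\,\cdot\,)$ for $f$ given by right multiplication by $p$, one verifies that $\nu(\cdot c)\colon I_2\to I_1$ is the projection onto the dual basis vector $c^*$, whereas $\nu(\cdot d)\colon I_3\to I_2$ sends $(da)^*\mapsto a^*$ and $d^*\mapsto e_2^*$ while annihilating $(bc)^*$, $b^*$, and $e_3^*$. A direct dimension count then yields $H^0 = H^{-1} = 0$, and $H^{-2}$ has dimension vector $(1,1,1)$ with arrow actions $a=0, c=1, b=1, d=0$, which is precisely $X_-$. Hence $\mathbb{S}(X_+) \cong X_-[2]$.

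The quiver with relations (\ref{E:gentlequiver}) admits the involutive automorphism swapping $a \leftrightarrow c$ and $b \leftrightarrow d$; this induces a $\kk$-algebra automorphism of $\Lambda$, whence a triangle autoequivalence of $D^b(\Lambda-\mod)$ that commutes with $\mathbb{S}$ and exchanges $X_+$ with $X_-$. Combined with the previous step, this yields $\mathbb{S}(X_-) \cong X_+[2]$, and therefore $\mathbb{S}^2(X_\pm) \cong X_\pm[4]$, i.e.~$X_\pm$ are $\frac{4}{2}$-fractionally Calabi--Yau. The main technical obstacle is the explicit identification of the Nakayama images of $\cdot c$ and $\cdot d$; once that bookkeeping is in hand, recognizing $H^{-2}$ as $X_-$ is immediate from dimension vectors and arrow actions.
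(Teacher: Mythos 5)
Your computation is correct and is precisely the ``direct calculation'' that the paper invokes without writing out: the minimal projective resolution $0 \to P_3 \xrightarrow{\cdot d} P_2 \xrightarrow{\cdot c} P_1 \to X_+ \to 0$, the Nakayama images of the differentials, and the identification of $H^{-2}$ with $X_-$ all check out, and the twist by the quiver involution (which commutes with $\mathbb{S}$ by uniqueness of Serre functors) legitimately handles $X_-$. No gaps.
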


\begin{corollary}
The following composition of the inclusion and projection functors
$$\mathsf{Tria}(X_{+}, X_{-}) \hookrightarrow
D^b(\Lambda-\mod) \lar
 \frac{\displaystyle D^b(\Lambda-\mod)}{\displaystyle \Band(\Lambda)}$$
 is fully faithful.
\end{corollary}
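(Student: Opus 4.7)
The plan is to combine Verdier's Lemma \ref{L:Verdier} with the fractional Calabi--Yau comparison of Lemma \ref{L:fractCY}. Since the inclusion into $D^b(\Lambda-\mod)$ is manifestly fully faithful, it suffices to show that the projection to the Verdier quotient restricts to a fully faithful functor on $\mathsf{Tria}(X_{+}, X_{-})$. By Lemma \ref{L:Verdier}, this will follow once I establish the inclusion $\mathsf{Tria}(X_{+}, X_{-}) \subseteq {}^{\perp}\Band(\Lambda)$: for then the canonical map
$$\Hom_{D^b(\Lambda-\mod)}(Y, Z) \lar \Hom_{D^b(\Lambda-\mod)/\Band(\Lambda)}(Y, Z)$$
is bijective for every $Y \in \mathsf{Tria}(X_{+}, X_{-})$ and every $Z \in D^b(\Lambda-\mod)$, in particular when $Z$ is taken from the same subcategory.

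Next I would reduce the orthogonality check to the two generators. The left orthogonal ${}^{\perp}\Band(\Lambda)$ is a full triangulated subcategory of $D^b(\Lambda-\mod)$ closed under direct summands, so it automatically contains all of $\mathsf{Tria}(X_{+}, X_{-})$ as soon as it contains $X_{+}$ and $X_{-}$. Thus the problem reduces to verifying $\Hom_{D^b(\Lambda-\mod)}(X_{\pm}, B) = 0$ for every object $B \in \Band(\Lambda)$.

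The remaining ingredient, and the only point where one actually has to do something, is to translate the band condition into the fractional Calabi--Yau language. By definition, every band $B$ satisfies $\tau B \cong B$ for the Auslander--Reiten translation on $D^b(\Lambda-\mod)$. Since $\Lambda$ has finite global dimension, $\tau = \mathbb{S}[-1]$ for the Serre functor $\mathbb{S}$, so $\tau B \cong B$ is equivalent to $\mathbb{S}B \cong B[1]$, i.e.~$B$ has fractional Calabi--Yau parameters $(m_{B}, n_{B}) = (1, 1)$. Combined with Lemma \ref{L:Xplusminus}, which yields $\mathbb{S}^{2}(X_{\pm}) \cong X_{\pm}[4]$ and thus parameters $(m_{\pm}, n_{\pm}) = (2, 4)$ for $X_{\pm}$, this gives $d = 2 \cdot 1 - 1 \cdot 4 = -2 \ne 0$. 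Lemma \ref{L:fractCY} then forces $\Hom_{D^b(\Lambda-\mod)}(X_{\pm}, B) = 0$ (and symmetrically $\Hom_{D^b(\Lambda-\mod)}(B, X_{\pm}) = 0$), completing the verification.
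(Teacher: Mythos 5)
Your proof is correct and follows essentially the same route as the paper: establish via Lemma \ref{L:Xplusminus} and Lemma \ref{L:fractCY} (bands have Calabi--Yau parameters $(1,1)$, the $X_{\pm}$ have $(2,4)$, so $d\ne 0$) that $X_{\pm}$ lie in the orthogonal of $\Band(\Lambda)$, and then conclude with Verdier's Lemma \ref{L:Verdier}. Your explicit reduction to the generators $X_{\pm}$ using that the orthogonal is triangulated and closed under summands is a detail the paper leaves implicit, but the argument is the same.
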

\begin{proof}
Lemma \ref{L:Xplusminus} and Lemma \ref{L:fractCY} applied to the Serre functor $\mathbb{S}$ in $D^b(\Lambda-\mod)$ imply that
$X_{\pm} \in \Ob\bigl( ^{\perp}\hspace{-0.5pt} \Band(\Lambda)\bigr) \cap \Ob\bigl(\Band(\Lambda)^{\perp}\bigr)$. Hence, the claim
 follows from Lemma \ref{L:Verdier}.
\end{proof}
\begin{theorem} \label{T:Maintilde}
There exists an equivalence of triangulated categories
$$
\GG \colon \, \frac{\displaystyle D^b(A_{\mathsf{nd}}-\mod)}{\displaystyle \Hot^b\bigl(\add(P_{*})\bigr)} \longrightarrow
\left(\frac{\displaystyle D^b(\Lambda-\mod)}{\displaystyle \Band(\Lambda)}\right)^{\omega}.
$$
\end{theorem}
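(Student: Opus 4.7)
The plan is to exhibit a triangle functor $\GG$ by sending distinguished generating objects to their counterparts, verify $\GG$ is fully faithful by matching $\Hom$-spaces, and deduce essential surjectivity from a classification of indecomposables on the $\Lambda$-side. On the $A_{\mathsf{nd}}$-side, by Theorem \ref{T:MainT}(a) and Remark \ref{R:Tria} the category $\Delta_{\mathsf{nd}}$ is generated (up to shifts, extensions, and direct summands) by the collection $\bigl\{P_{+}, P_{-}, S_{+}, S_{-}\bigr\}$, where $S_{\pm}=\kS_{\pm}(1)$ are the simples. On the $\Lambda$-side, the natural candidates are $\bigl\{P^{+}, P^{-}, X_{+}, X_{-}\bigr\}$. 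I would define $\GG$ on objects by $P_{\pm}\mapsto P^{\pm}$ and $S_{\pm}\mapsto X_{\pm}$, and promote this to a triangle functor using dg-enhancements of both sides: via Keller's tilting theorem it suffices to exhibit a quasi-isomorphism of dg-endomorphism algebras of the compact generators $T:=P_{+}\oplus P_{-}\oplus S_{+}\oplus S_{-}$ and $\widetilde{T}:=P^{+}\oplus P^{-}\oplus X_{+}\oplus X_{-}$.

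The verification of this quasi-isomorphism is the main computational step. On the $\Lambda$-side, Lemma \ref{L:Xplusminus} combined with Lemma \ref{L:fractCY} gives $X_{\pm}\in {}^{\perp}\hspace{-0.5pt}\Band(\Lambda)\cap \Band(\Lambda)^{\perp}$, so by Verdier's Lemma \ref{L:Verdier} all $\Hom^{\bullet}$-spaces involving $X_{\pm}$ are computed already in $D^{b}(\Lambda-\mod)$ and can be read off directly from the quiver \eqref{E:gentlequiver}. The $\Hom^{\bullet}$-spaces involving $P^{\pm}$ are computed by splitting along the idempotents $e_{\pm}$ from \eqref{E:Idempotents} and using the lemma that identifies all indecomposable projectives with $P$ in $\widetilde{\Delta}_{\mathsf{nd}}$. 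These are then to be matched with the $\Hom^{\bullet}$-computations on the $\Delta_{\mathsf{nd}}$-side given by Theorem \ref{T:MainT}(b) for $P_{\pm}$ and by the explicit projective resolutions of $S_{\pm}$ recalled in Remark \ref{R:Tria} for the remaining pairs.

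For essential surjectivity, I would classify the indecomposables of $\widetilde{\Delta}_{\mathsf{nd}}$. Since $\Lambda$ is a gentle (in particular derived tame) algebra, the indecomposables of $D^{b}(\Lambda-\mod)$ split into string and band complexes, and the band complexes are killed in the Verdier quotient by definition of $\Band(\Lambda)$. A case analysis on the shape of the underlying string, parallel to the one in the proof of Theorem \ref{T:MainT}(a) (decomposing non-linearly oriented strings, cancelling non-minimal differentials via cone-factorizations), should show that every indecomposable string becomes in $\widetilde{\Delta}_{\mathsf{nd}}$ either a shift of $P^{+}$ or $P^{-}$, or an object of $\mathsf{Tria}(X_{+},X_{-})$. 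The indecomposables of $\mathsf{Tria}(X_{+},X_{-})$ are then classified by an exact analogue of Corollary \ref{C:indinTria}, and the resulting list matches the classification on the $\Delta_{\mathsf{nd}}$-side term by term under $\GG$.

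The hard part will be the dg (or $A_\infty$) verification in the second step: one must control the higher structure, not just the graded dimensions, of the endomorphism algebras of $T$ and $\widetilde{T}$, which is the real content that upgrades a numerical Hom-matching to a genuine triangle equivalence. An alternative that avoids explicit dg-enhancements is to descend from a larger ambient derived equivalence---for instance, from a tilting object on a global noncommutative nodal curve as in \cite{Tilting}---so that the local Hom-matching established above is promoted to a functorial identification of the two quotients.
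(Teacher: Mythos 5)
Your primary route (match the four generators, then upgrade a graded $\Hom$-matching to a triangle equivalence via dg-enhancements and Keller's theorem) contains a genuine gap, and you name it yourself: the quasi-isomorphism of the dg (or $A_\infty$) endomorphism algebras of $T=P_+\oplus P_-\oplus S_+\oplus S_-$ and $\widetilde T=P^+\oplus P^-\oplus X_+\oplus X_-$ is ``the main computational step'' and ``the real content,'' but it is never carried out. Matching graded dimensions of $\Hom^\bullet$-spaces does not produce a triangle equivalence; one must control the full $A_\infty$-structure on $\Ext^\bullet(T,T)$, and since $\Hom_{\Delta_{\mathsf{nd}}}(P_\sigma,P_\tau[n])$ is nonzero in infinitely many negative degrees (Theorem \ref{T:MainT}(b)), there is no formality or tilting-type shortcut available. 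There is also a latent circularity in your essential-surjectivity step: you would need $\widetilde T$ to classically generate $\widetilde{\Delta}_{\mathsf{nd}}$, i.e.\ a classification of indecomposables there, which in the paper is only obtained (Lemma \ref{L:IndtildeDn}) as a \emph{consequence} of the equivalence $\GG$. A dg-model argument of the kind you sketch was later carried out by Kalck--Yang \cite{KalckYang}, but as written your proposal defers precisely the step that constitutes the proof.

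The one-sentence ``alternative'' at the end of your proposal is in fact the paper's entire proof, and it is worth seeing why it closes the gap for free. Take the plane nodal cubic $E$ with $\kA={\mathcal End}_E(\kO\oplus\kI)$ and $\mathbb{E}=(E,\kA)$. The Burban--Drozd tilting equivalence $\TT\colon D^b\bigl(\Coh(\mathbb{E})\bigr)\to D^b(\Lambda-\mod)$ of \cite[Section 7]{Tilting} identifies $\cP(E)$ (the image of $\Perf(E)$) with $\Band(\Lambda)$, hence induces an equivalence of idempotent-completed Verdier quotients $\bigl(D^b(\Coh(\mathbb{E}))/\cP(E)\bigr)^\omega\to\widetilde{\Delta}_{\mathsf{nd}}$. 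The localization machinery of the earlier sections (Theorem \ref{T:keylocaliz} and Corollary \ref{C:Localization}) identifies the left-hand side with the local category $\Delta_{\mathsf{nd}}$, which is already idempotent complete by Theorem \ref{T:ResultsonVerdierQout}. Composing gives $\GG$ with no generator-matching, no $\Hom$-computations, and no classification of indecomposables on the $\Lambda$-side required; the ambient equivalence is an honest derived equivalence of abelian categories, so all higher structure is transported automatically. If you intend to submit a complete argument, you should either execute the dg computation in full or promote your final sentence from an aside to the actual proof.
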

\begin{proof}
Let $E = V(zy^2 - x^3 - x^2 z) \subset \mathbb{P}^2$ be a nodal cubic curve and
$\kF' = \kI$ be the ideal sheaf of the singular point of $E$. Let
$\kF = \kO \oplus \kI$, $\kA = {\mathcal End}_E(\kF)$  and $\mathbb{E} = (E, \kA)$.
By a result of Burban and Drozd \cite[Section 7]{Tilting}, there exists a triangle  equivalence
$$\mathbb{T}\colon D^b\bigl(\Coh (\mathbb{E})\bigr) \lar  D^b(\Lambda-\mod)$$
identifying
the image of the category $\Perf(E)$ with the category $\Band(\Lambda)$. Moreover, by
\cite[Proposition 12]{Tilting}, the functor   $\TT$ restricts to an equivalence
$\mathsf{Tria}(S_{+}, S_{-}) \rightarrow \mathsf{Tria}(X_{+}, X_{-}).$ This can be summarized
by the following commutative diagram of categories and functors
\begin{equation}\label{E:importantdiagram}
\begin{array}{c}
\begin{xy}\SelectTips{cm}{}
\xymatrix{
\frac{\displaystyle D^b(A_{\mathsf{nd}}-\mod)}{\displaystyle \Hot^b\bigl(\add(P_{*})\bigr)} \ar@/^{10pt}/[rrd]^-\GG & & \\
\left(\frac{\displaystyle D^b(A_{\mathsf{nd}}-\fdmod)}{\displaystyle \Hot^b_{\mathsf{fd}}\bigl(\add(P_{*})\bigr)}\right)^{\omega}  \ar[r]^-{\sim} \ar[u]^-\sim &\left(\frac{\displaystyle D^b\bigl(\Coh(\mathbb{E})\bigr)}{\displaystyle \cP(E)}\right)^{\omega} \ar[r]^-\sim & \left(\frac{\displaystyle D^b(\Lambda-\mod)}{\displaystyle \Band(\Lambda)}\right)^{\omega} \\
 D^b(A_{\mathsf{nd}}-\fdmod) \ar[u]^-{\mathsf{can}} \ar[r] & D^b\bigl(\Coh(\mathbb{E})\bigr) \ar[u]^-{\mathsf{can}} \ar[r]^-{\TT}  &  D^b(\Lambda-\mod) \ar[u]_-{\mathsf{can}} \\
 & \mathsf{Tria}(S_{+}, S_{-})  \ar@{^{(}->}[u]\ar[r]^{\sim} &
 \mathsf{Tria}(X_{+}, X_{-})\ar@{^{(}->}[u]
}\end{xy}
\end{array}
\end{equation}
where $\GG\colon \Delta_{\mathsf{nd}} \rightarrow \widetilde{\Delta}_{\mathsf{nd}}$ is the induced  equivalence of triangulated categories.\end{proof}

\begin{lemma}\label{L:IndtildeDn}
The indecomposable objects of the triangulated category $\widetilde{\Delta}_{\mathsf{nd}}$ are
\begin{itemize}
\item $P^{\pm}[n] \cong \GG\bigl(P_{\pm}[n]\bigr), \quad n \in \ZZ.$
\item The indecomposables  of the full subcategory  $\mathsf{Tria}(X_{+}, X_{-})\cong \GG\bigl(\mathsf{Tria}(S_{+}, S_{-})\bigr).$
\end{itemize}
\end{lemma}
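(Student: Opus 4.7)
The plan is to transport the classification of indecomposables in $\Delta_{\mathsf{nd}}$ (Theorem \ref{T:MainT}) along the triangle equivalence $\GG$ constructed in Theorem \ref{T:Maintilde}. Since $\GG$ is a triangle equivalence, it induces a bijection on isomorphism classes of indecomposables; by Theorem \ref{T:MainT}(a) every indecomposable of $\Delta_{\mathsf{nd}}$ is isomorphic to some $P_{\tau}[n]$ or $\kS_{\tau}(l)[n]$ with $\tau \in \{+,-\}$, $n \in \ZZ$ and $l \in \mathbb{N}$, so the indecomposables of $\widetilde{\Delta}_{\mathsf{nd}}$ are exhausted by their images $\GG(P_{\tau}[n])$ and $\GG(\kS_{\tau}(l)[n])$.

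For the second bullet I would invoke the commutative diagram \eqref{E:importantdiagram} from the proof of Theorem \ref{T:Maintilde}, which exhibits $\GG$ as restricting to an equivalence $\mathsf{Tria}(S_{+}, S_{-}) \stackrel{\sim}\lar \mathsf{Tria}(X_{+}, X_{-})$. Combined with Corollary \ref{C:indinTria}, which lists the shifts $\kS_{\tau}(l)[n]$ as the complete set of indecomposables of $\mathsf{Tria}(S_{+}, S_{-})$, this immediately identifies the collection $\bigl\{\GG(\kS_{\tau}(l)[n])\bigr\}$ with the indecomposables of $\mathsf{Tria}(X_{+}, X_{-})$.

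The main obstacle is the remaining identification $\GG(P_{\pm}) \cong P^{\pm}$. The plan is threefold. First, $P_{\pm}$ are indecomposable and, by the K--theoretic argument in the proof of Theorem \ref{T:MainT}(c), do not belong to $\mathsf{Tria}(S_{+}, S_{-})$; hence $\GG(P_{\pm})$ are indecomposables of $\widetilde{\Delta}_{\mathsf{nd}}$ lying outside $\mathsf{Tria}(X_{+}, X_{-})$. Second, I would chase the image of $P_{+} \oplus P_{-}$ through diagram \eqref{E:importantdiagram}: this object is sent, up to isomorphism in $\widetilde{\Delta}_{\mathsf{nd}}$, to the common image $P$ of the indecomposable projective $\Lambda$--modules, whose decomposition into indecomposables is precisely $P^{+} \oplus P^{-}$ by the construction of the idempotents $e_{\pm}$ in \eqref{E:Idempotents}. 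Third, the Krull--Schmidt property then forces $\{\GG(P_{+}), \GG(P_{-})\} = \{P^{+}, P^{-}\}$; with a consistent (a posteriori if necessary) labeling of $e_{\pm}$ one obtains $\GG(P_{\pm}) \cong P^{\pm}$, and applying the shift functor $[n]$ yields $\GG(P_{\pm}[n]) \cong P^{\pm}[n]$ for all $n \in \ZZ$, completing the proof.
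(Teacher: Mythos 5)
Your overall strategy is the same as the paper's: transport the classification from Theorem \ref{T:MainT} and Corollary \ref{C:indinTria} along $\GG$, so the only real content is the identification $\GG(P_{\pm})\cong P^{\pm}$. It is exactly there that your argument has a gap. You say you will ``chase the image of $P_{+}\oplus P_{-}$ through diagram (\ref{E:importantdiagram})'' and find that it lands on $P$, but this chase is not well defined as stated: the middle row of that diagram passes through $D^b(A_{\mathsf{nd}}-\fdmod)$ and $D^b\bigl(\Coh(\mathbb{E})\bigr)$, and the projective modules $P_{\pm}$ are \emph{not} finite dimensional, so they are not objects you can feed into $\TT$. To make the chase work you must first represent $P_{+}\oplus P_{-}$ by a finite-dimensional complex inside the quotient. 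The paper does this via the projective resolution $0\to P_{-}\oplus P_{+}\to P_{*}\to S_{*}\to 0$ of the simple module $S_{*}$, which gives $S_{*}[-1]\cong P_{+}\oplus P_{-}$ in $\Delta_{\mathsf{nd}}$; one then computes $\TT(S_{*})\cong P_{3}[1]$ and concludes $\GG(P_{+}\oplus P_{-})\cong P\cong P^{+}\oplus P^{-}$. This intermediate object $S_{*}$ is the missing idea in your proposal, and without it the second step of your three-step plan does not get off the ground.

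A smaller point: the lemma asserts $\GG(P_{\pm})\cong P^{\pm}$ with the \emph{specific} labels fixed by (\ref{E:Idempotents}) and by the quiver (\ref{E:nodalquiver}), so the ``a posteriori relabeling'' you invoke after Krull--Schmidt is not quite a proof of the statement as written. The paper pins the labels down by checking that the natural map $P_{1}\to X_{+}$ induces a non-zero morphism $P^{+}\to X_{+}$ in $\widetilde{\Delta}_{\mathsf{nd}}$ while $\Hom_{\widetilde{\Delta}_{\mathsf{nd}}}(P^{+},X_{-})=0$, and combining this with $\GG(S_{\pm})\cong\TT(S_{\pm})\cong X_{\pm}$ together with the nonvanishing of $\Hom_{\Delta_{\mathsf{nd}}}(P_{+},S_{+})$. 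If you want a complete argument you should supply some such computation; the rest of your outline (indecomposability of $P_{\pm}$ outside $\mathsf{Tria}(S_{+},S_{-})$, the restriction of $\GG$ to $\mathsf{Tria}(S_{+},S_{-})\stackrel{\sim}{\to}\mathsf{Tria}(X_{+},X_{-})$, and the appeal to Theorem \ref{T:MainT} and Corollary \ref{C:indinTria}) matches the paper and is fine.
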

\begin{proof}
Consider the projective resolution of the simple $A_{\mathsf{nd}}$-module $S_{*}$
$$\begin{xy}\SelectTips{cm}{}\xymatrix{0 \ar[r] & P_{\li} \oplus P_{\re} \ar[r]^(0.6){\left(\bsm \cdot \beta & \cdot \delta \esm\right)} & P_{*} \ar[r] & S_{*} \ar[r] & 0}\end{xy}.$$
Completing it to a distinguished triangle yields an isomorphism
$S_{*}[-1] \cong P_{+} \oplus P_{-}$ in $\Delta_{\mathsf{nd}}$. In the notations
 of the diagrams (\ref{E:importantdiagram}) and (\ref{E:gentlequiver}), we have $\TT(S_{*}) \cong P_{3}[1]$ and therefore
$$\GG(P_{+} \oplus P_{-}) \cong \GG(S_{*}[-1]) \cong P \cong (P^+ \oplus P^-).$$
Recall that $X_{+}\cong \bigl(P_{3} \stackrel{\cdot d}\longrightarrow P_{2} \stackrel{\cdot c}\longrightarrow P_{1}\bigr)$, where $P_{1}$ is located in degree $0$ and $P^\pm := (P, e_{\pm})
\in \Ob\bigl(\widetilde{\Delta}_{\mathsf{nd}}\bigr)$, with $e_{\pm}$ as defined in (\ref{E:Idempotents}). A direct calculation shows that the obvious morphism from $P_{1}$ to $X_{+}$ induces a non-zero morphism $P^+=(P, e_{+}) \rightarrow X_{+}$ in $\widetilde{\Delta}_{\mathsf{nd}}$, whereas $\Hom_{\widetilde{\Delta}_{\mathsf{nd}}}(P^{+}, X_{-})=0$. Moreover, it was shown in \cite{Tilting} that $\TT(S_{\pm})\cong X_{\pm}$. This implies $\GG(S_{\pm})\cong X_{\pm}$ and thus $\GG(P_{\pm})\cong P^{\pm}$.
Theorem \ref{T:MainT} and Corollary \ref{C:indinTria} yield  the stated classification of indecomposables   in $\widetilde{\Delta}_{\mathsf{nd}}.$
\end{proof}

\section{Concluding remarks on $\Delta_{\mathsf{nd}}$}
\begin{proposition}\label{P:ARtriangles}
The  category $\mathsf{Tria}(S_{+}, S_{-}) \subset \Delta_{\mathsf{nd}}$ has Auslander--Reiten triangles.
\end{proposition}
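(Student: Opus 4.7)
The plan is to reduce the statement to a Serre functor computation via the equivalence $\GG\colon \Delta_{\mathsf{nd}} \stackrel{\sim}{\lar} \widetilde{\Delta}_{\mathsf{nd}}$ of Theorem \ref{T:Maintilde}. By Corollary \ref{C:indinTria} combined with the chain of equivalences assembled in diagram (\ref{E:importantdiagram}), $\GG$ restricts to a triangle equivalence $\mathsf{Tria}(S_+, S_-) \stackrel{\sim}{\lar} \mathsf{Tria}(X_+, X_-)$, where the latter sits inside $D^b(\Lambda-\mod)$ as a full triangulated subcategory. It therefore suffices to produce Auslander--Reiten triangles in $\mathsf{Tria}(X_+, X_-)$.

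Since $\Lambda$ is a finite-dimensional $\kk$-algebra of finite global dimension, $D^b(\Lambda-\mod)$ is a Hom-finite Krull--Schmidt $\kk$-linear triangulated category carrying a Serre functor $\mathbb{S} = -\stackrel{\LL}{\otimes}_\Lambda D(\Lambda)$. I would invoke the theorem of Reiten--Van den Bergh, which asserts that any Hom-finite Krull--Schmidt $\kk$-linear triangulated category admitting a Serre functor has Auslander--Reiten triangles. To apply this to the subcategory rather than to the whole derived category, the key step is to verify that $\mathbb{S}$ restricts to $\mathsf{Tria}(X_+, X_-)$. This is exactly where Lemma \ref{L:Xplusminus} enters: the isomorphisms $\mathbb{S}(X_\pm) \cong X_\mp[2]$ show that both $\mathbb{S}$ and $\mathbb{S}^{-1}$ send the generators of $\mathsf{Tria}(X_+, X_-)$ back into the subcategory, and since $\mathsf{Tria}(X_+, X_-)$ is closed under shifts, cones and direct summands, the triangulated auto-equivalence $\mathbb{S}$ restricts automatically.

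Once the restriction is in place, the natural duality
\[
D\Hom_{D^b(\Lambda-\mod)}(Y, Z) \cong \Hom_{D^b(\Lambda-\mod)}(Z, \mathbb{S}(Y))
\]
evaluated on objects of $\mathsf{Tria}(X_+, X_-)$ endows this subcategory with its own Serre functor. The Hom-finite and Krull--Schmidt properties are inherited from $D^b(\Lambda-\mod)$, so a second application of Reiten--Van den Bergh yields Auslander--Reiten triangles in $\mathsf{Tria}(X_+, X_-)$; these are then transported via $\GG^{-1}$ back to $\mathsf{Tria}(S_+, S_-) \subset \Delta_{\mathsf{nd}}$.

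The only point demanding real attention is the closure of $\mathsf{Tria}(X_+, X_-)$ under the Serre functor, and this is supplied precisely by the fractional Calabi--Yau property of Lemma \ref{L:Xplusminus}. Everything else is formal, so I do not anticipate a serious obstacle. As an optional sanity check, one could corroborate the abstract argument by explicitly exhibiting almost split triangles among the minimal strings using the classification of indecomposables in Corollary \ref{C:indinTria}, but this is not needed for the proof of existence.
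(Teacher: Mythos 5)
Your proposal is correct and follows essentially the same route as the paper: reduce via the equivalence to $\mathsf{Tria}(X_+, X_-) \subset D^b(\Lambda-\mod)$, use Lemma \ref{L:Xplusminus} to see that the Serre functor (equivalently the Auslander--Reiten translation) preserves this subcategory, and conclude that the restricted functor furnishes Auslander--Reiten triangles there. The only cosmetic difference is that you cite Reiten--Van den Bergh where the paper cites Happel and phrases the conclusion directly in terms of restricting $\tau = \mathbb{S}\circ[-1]$.
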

\begin{proof}
 As mentioned  above, we have an exact equivalence of triangulated categories
 $$\mathsf{Tria}(S_{+}, S_{-}) \cong \mathsf{Tria}(X_{+}, X_{-}) \subset D^b(\Lambda-\mod).$$
 The category $D^b(\Lambda-\mod)$ has a Serre functor $\mathbb{S}$ and therefore has Auslander--Reiten triangles, see
  \cite{Happel}. Let $\tau=\mathbb{S}\circ[-1]$ be the Auslander--Reiten translation. Using that $\tau$ is an equivalence and Lemma \ref{L:Xplusminus}, we obtain $\tau\bigl(\mathsf{Tria}(X_{+}, X_{-})\bigr) \cong \mathsf{Tria}\bigl(\tau(X_{+}), \tau(X_{-})\bigr) \cong \mathsf{Tria}(X_{+}, X_{-}).$ Now, the restriction of $\tau$ to $\mathsf{Tria}(X_{+}, X_{-})$ is the Auslander--Reiten translation of this subcategory.
\end{proof}
\begin{remark}\label{r:AR-quiver}
One can show that the Auslander--Reiten quiver of $\mathsf{Tria}(S_{+}, S_{-})$ consists of two $\ZZ A_{\infty}$--components. We draw one of them below, indicating the action of the Auslander--Reiten translation by $\SelectTips{cm}{} \xymatrix{&\ar@{-->}[l]}$. The other component is obtained from this one by changing the roles of $+$ and $-$.
\[
{\scriptsize
\SelectTips{cm}{}
\begin{xy} 0;<0.6pt,0pt>:<0pt,-0.5pt>::
(0,100) *+{} ="0",
(0,0) *+{} ="1",
(50,150) *+{\kS_{+}(1)[2]} ="2",
(50,50) *+{\kS_{-}(3)[1]} ="3",
(100,100) *+{\kS_{-}(2)[1]} ="5",
(100,0) *+{} ="6",
(150,150) *+{\kS_{-}(1)[1]} ="7",
(150,50) *+{\kS_{+}(3)} ="8",
(200,100) *+{\kS_{+}(2)} ="10",
(200,0) *+{} ="11",
(250,150) *+{\kS_{+}(1)} ="12",
(250,50) *+{\kS_{-}(3)[-1]} ="13",
(300,100) *+{\kS_{-}(2)[-1]} ="15",
(300,0) *+{} ="16",
(350,150) *+{\kS_{-}(1)[-1]} ="17",
(350,50) *+{\kS_{+}(3)[-2]} ="18",
(400,100) *+{\kS_{+}(2)[-2]} ="20",
(400,0) *+{} ="21",
(450,150) *+{\kS_{+}(1)[-2]}="22",
(450,50) *+{\kS_{-}(3)[-3]} ="23",
(500,0) *+{} ="24",
(500,100) *+{} ="25",
"0", {\ar@{.}"2"},
"0", {\ar@{.}"3"},
"1", {\ar@{.}"3"},
"2", {\ar"5"},
"3", {\ar"5"},
"3", {\ar@{.}"6"},
"5", {\ar"7"},
"5", {\ar"8"},
"6", {\ar@{.}"8"},
"7", {\ar@{-->}"2"},
"7", {\ar"10"},
"8", {\ar@{-->}"3"},
"8", {\ar"10"},
"8", {\ar@{.}"11"},
"10", {\ar@{-->}"5"},
"10", {\ar"12"},
"10", {\ar"13"},
"11", {\ar@{.}"13"},
"12", {\ar@{-->}"7"},
"12", {\ar"15"},
"13", {\ar@{-->}"8"},
"13", {\ar"15"},
"13", {\ar@{.}"16"},
"15", {\ar@{-->}"10"},
"15", {\ar"17"},
"15", {\ar"18"},
"16", {\ar@{.}"18"},
"17", {\ar@{-->}"12"},
"17", {\ar"20"},
"18", {\ar@{-->}"13"},
"18", {\ar"20"},
"18", {\ar@{.}"21"},
"20", {\ar@{-->}"15"},
"20", {\ar"22"},
"20", {\ar"23"},
"21", {\ar@{.}"23"},
"22", {\ar@{-->}"17"},
"22", {\ar@{.}"25"},
"23", {\ar@{.}"24"},
"23", {\ar@{.}"25"},
"23", {\ar@{-->}"18"},
\end{xy}
}
\]
The category $\Delta_{\mathsf{nd}}$ does not have Auslander--Reiten triangles, but we may still consider the quiver of irreducible morphisms in $\Delta_{\mathsf{nd}}$, which has two \emph{additional} $A^{\infty}_{\infty}$--components.
$$
\begin{xy}\SelectTips{cm}{} \xymatrix{\ar@{.>}[r] & P_{\pm}[2] \ar[r] & P_{\mp}[1] \ar[r] &P_{\pm} \ar[r] & P_{\mp}[-1] \ar[r] & P_{\pm}[-2]\ar@{.>}[r] &}  \end{xy}$$

\end{remark}
\begin{proposition}
The respective triangulated categories  $\mathsf{Tria}(X_{+}, X_{-})$ and $\Delta_{\mathsf{nd}}$  are not triangle equivalent to the bounded derived category of a finite dimensional algebra.
\end{proposition}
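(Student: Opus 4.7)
The two categories behave differently with respect to Serre duality, so I would handle them by separate arguments, unified by the assumption that each is triangle equivalent to $D^b(\mod A)$ for some finite dimensional $\kk$-algebra $A$.

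For $\Delta_{\mathsf{nd}}$, the plan is to exploit the unboundedness of Hom spaces in negative shifts. By Theorem \ref{T:MainT}(b), $\Hom_{\Delta_{\mathsf{nd}}}(P_+, P_+[n]) \cong \kk$ for every non-positive even integer $n$, so this space is nonzero for infinitely many negative values of $n$. On the other hand, in $D^b(\mod A)$ for any finite dimensional $A$, an induction on the cohomological amplitude using the truncation triangles $\tau^{\leq j} X \to X \to \tau^{>j} X$ of the standard $t$-structure together with the vanishing of negative $\Ext$ groups in $\mod A$ yields $\Hom_{D^b(\mod A)}(X, Y[n]) = 0$ whenever $n < a_Y - b_X$, where $b_X = \max\{i : H^i X \neq 0\}$ and $a_Y = \min\{i : H^i Y \neq 0\}$. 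This vanishing, valid regardless of the global dimension of $A$, contradicts the previous unboundedness.

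For $\mathsf{Tria}(X_+, X_-)$, Hom spaces do decay (the category embeds into $D^b(\Lambda-\mod)$), so a different argument is required. I would use the determinant of the Serre functor on the Grothendieck group. By Lemma \ref{L:Xplusminus}, the Serre functor $\mathbb{S}$ of $D^b(\Lambda-\mod)$ sends $X_\pm$ to $X_\mp[2]$, so $\mathsf{Tria}(X_+, X_-)$ is preserved by $\mathbb{S}$ and inherits a Serre functor by restriction. Assuming $\mathsf{Tria}(X_+, X_-) \cong D^b(\mod A)$, Happel's theorem then forces $A$ to have finite global dimension, so that the Cartan matrix $C$ of $A$ lies in $GL_n(\ZZ)$. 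Routing through Theorem \ref{T:Maintilde} and the linear independence of $[S_+], [S_-]$ in $K_0(D^b(A_{\mathsf{nd}}-\mod)) \cong \ZZ^3$ gives $K_0(\mathsf{Tria}(X_+, X_-)) \cong \ZZ\langle [X_+], [X_-]\rangle \cong \ZZ^2$, so $n=2$. The relation $\mathbb{S}(X_\pm) \cong X_\mp[2]$ shows that $\mathbb{S}|_{K_0}$ is represented in the basis $[X_+], [X_-]$ by the swap matrix $\left(\bsm 0 & 1 \\ 1 & 0 \esm\right)$, of determinant $-1$. Meanwhile the Nakayama identity $\mathbb{S}(P_i) \cong I_i$ shows that, in the basis of simples, $\mathbb{S}|_{K_0}$ is represented by $C^T C^{-1}$, of determinant $\det(C^T)/\det(C) = 1$. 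Since the determinant of an automorphism of a finitely generated free abelian group is invariant under conjugation, the equivalence would enforce $1 = -1$, a contradiction.

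The main obstacle is carrying out the second argument cleanly: one must verify that $[X_+]$ and $[X_-]$ remain linearly independent in $K_0(\mathsf{Tria}(X_+, X_-))$, even though their images already coincide in $K_0(D^b(\Lambda-\mod))$ (both equal $[S_1] + [S_2] + [S_3]$). This is rescued by the triangle equivalence $\mathsf{Tria}(X_+, X_-) \cong \mathsf{Tria}(S_+, S_-)$ and the injection of the latter's Grothendieck group into the ambient $K_0(D^b(A_{\mathsf{nd}}-\mod))$. The remaining ingredients---Happel's theorem, the explicit Nakayama formula on $K_0$, and the conjugation invariance of the determinant---are standard but deserve careful bookkeeping.
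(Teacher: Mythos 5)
Your proposal is correct in strategy but takes a genuinely different route from the paper. The paper argues uniformly for both categories: a hypothetical finite dimensional $A$ would have discrete derived category, hence be gentle by Vossieck's classification, hence Gorenstein by Gei\ss--Reiten, so that the Nakayama functor is a Serre functor on $\Hot^b\bigl(\Pro(A)\bigr)$ whose action on objects is known explicitly from Bobi\'nski--Gei\ss--Skowro\'nski; this contradicts $\mathbb{S}^2(X)\cong X[4]$ on $\mathsf{Tria}(X_+,X_-)$. Your split treatment buys more elementary inputs: for $\Delta_{\mathsf{nd}}$ the non-vanishing of $\Hom_{\Delta_{\mathsf{nd}}}(P_+,P_+[n])$ for all even $n\le 0$ against the boundedness of negative self-extensions in $D^b(\mod A)$ is a clean and completely self-contained contradiction needing no Serre functor at all; for $\mathsf{Tria}(X_+,X_-)$ your determinant comparison ($-1$ for the swap of $[X_+],[X_-]$ versus $+1$ for $C^TC^{-1}$) replaces the appeal to Vossieck and to the classification of discrete derived categories by Happel's theorem and the Coxeter matrix.

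Two points in the $K_0$ bookkeeping need repair. First, as literally stated the linear independence of $[S_+]$ and $[S_-]$ in $K_0\bigl(D^b(A_{\mathsf{nd}}-\mod)\bigr)\cong\ZZ^3$ is \emph{false}: in that group the projectives form a basis, and the resolutions of Remark \ref{R:Tria} give $[S_+]=[P_+]-[P_*]+[P_-]=[S_-]$. You must instead map into $K_0\bigl(D^b(A_{\mathsf{nd}}-\fdmod)\bigr)\cong\ZZ^3$, where the three simples form a basis and $[S_+]$, $[S_-]$ are visibly independent; this is the ambient category actually containing $\mathsf{Tria}(S_+,S_-)$ in diagram (\ref{E:importantdiagram}). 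Second, the ``injection of the Grothendieck group of a thick subcategory into the ambient $K_0$'' is not a general fact and cannot be invoked as such. What saves you is that $K_0\bigl(\mathsf{Tria}(S_+,S_-)\bigr)$ is \emph{generated} by $[S_+]$ and $[S_-]$ --- by Corollary \ref{C:indinTria} every indecomposable is a shifted minimal string, and by Remark \ref{R:Tria} every minimal string is an iterated cone on shifts of $S_\pm$ --- so the composite $\ZZ^2\twoheadrightarrow K_0\bigl(\mathsf{Tria}(S_+,S_-)\bigr)\rightarrow K_0\bigl(D^b(A_{\mathsf{nd}}-\fdmod)\bigr)$ is injective, forcing the first map to be an isomorphism. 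With these corrections $[X_+],[X_-]$ form a $\ZZ$-basis on which $\mathbb{S}$ acts by the swap, and your argument closes.
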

\begin{proof}
 Assume that there exists a triangle equivalence to the derived category of a finite dimensional algebra $A$. Then $D^b(A-\mod)$ is of discrete representation type. Hence, $A$ is a \emph{gentle} algebra occuring in Vossieck's classification \cite{Vossieck}. In particular, $A$ is a Gorenstein algebra \cite{GeissReiten}. Therefore, the Nakayama functor defines a Serre functor on $\Hot^b\bigl(\Pro(A)\bigr)$ \cite{Happel}, whose action on objects is described in \cite[Theorem B]{BobinskiGeissSkowronski}. On the other hand, $\mathbb{S}^2(X) \cong X[4]$ holds for all objects $X$ in $\mathsf{Tria}(X_{+}, X_{-})$, by Lemma \ref{L:Xplusminus} and Proposition \ref{P:ARtriangles}. This yields a contradiction.\end{proof}

\noindent
The following proposition generalizes Theorem \ref{T:Maintilde}.
\begin{proposition}
Let $n\geq 1$ and $\Lambda_{n}$ be the path algebra of the following quiver
 $$\begin{xy}\SelectTips{cm}{}\xymatrix{\circ  && \circ  && \circ  && \circ  && \circ \ar@{..}@/_8pt/[llllllll]_{\textit{identify}}
\\
& \circ  \ar[lu]^-{w_{1}^-} \ar[ru]_-{w_{1}^+} && \circ  \ar[lu]^-{w_{2}^-} \ar[ru]_-{w_{2}^+} && \cdots \ar[lu] \ar[ru] && \circ \ar[ru]_-{w_{n}^+} \ar[lu]^-{w_{n}^-}
\\
& \circ \ar@/^3pt/[u]^-{u_{1}} \ar@/_3pt/[u]_-{v_{1}} && \circ \ar@/^3pt/[u]^-{u_{2}} \ar@/_3pt/[u]_-{v_{2}} &&\cdots&& \circ
\ar@/^3pt/[u]^-{u_{n}} \ar@/_3pt/[u]_-{v_{n}}
}\end{xy}
$$
subject to the relations $w_i^- u_i = 0$ and $w_i^+ v_i=0$ for all $1 \le i \le n$. Then
$$\Delta_{n} := \left(\frac{D^b(\Lambda_{n}-\mod)}{\Band(\Lambda_{n})}\right)^{\omega} \cong \bigvee_{i=1}^n \Delta_{\mathsf{nd}}.$$
In particular, the  category $\Delta_{n}$ is representation discrete, $\Hom$-finite and $K_{0}(\Delta_{n}) \cong (\ZZ^2)^{\oplus n}$.
\end{proposition}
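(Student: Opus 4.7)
The plan is to imitate the proof of Theorem \ref{T:Maintilde}, replacing the nodal cubic $E$ by a projective nodal curve $Y_n$ with exactly $n$ singularities whose non-commutative Auslander resolution realises $\Lambda_n$. Concretely, take $Y_n$ to be the ``cycle of projective lines'': the reduced projective curve obtained by gluing $n$ copies of $\PP^1$ cyclically so that consecutive components meet transversally in a single point (for $n=1$ one recovers precisely the nodal cubic used in Theorem \ref{T:Maintilde}). Then $Z_n := \Sing(Y_n) = \{z_1, \dots, z_n\}$ consists of $n$ nodes. With $\kI = \kI_{Z_n}$, $\kF = \kO_{Y_n} \oplus \kI$, $\kA = {\mathcal End}_{Y_n}(\kF)$ and $\YY_n = (Y_n, \kA)$, the sheaf $\kA$ is the global Auslander sheaf of orders, and by \cite[Theorem 2]{Tilting} the ringed space $\YY_n$ is a non-commutative resolution of $Y_n$. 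Corollary \ref{C:nodalcurve} applied to $Y_n$ then yields an equivalence $\Delta_{Y_n}(\YY_n) \stackrel{\sim}\lar \bigvee_{i=1}^n \Delta_{\mathsf{nd}}$.

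The second ingredient is the global tilting theorem of Burban--Drozd \cite[Section 7]{Tilting}, which I would now upgrade from a single cuspidal component ($n=1$) to the cycle $Y_n$. One constructs a tilting complex $\kT_n \in D^b\bigl(\Coh(\YY_n)\bigr)$ whose endomorphism algebra is exactly $\Lambda_n$: the top row of vertices of $\Lambda_n$ (with the cyclic identification) corresponds to the structure sheaves of the $n$ components of $Y_n$, the middle and bottom rows correspond to the two ``sheets'' of the Auslander resolution at each node, and the relations $w_i^{-}u_i = 0 = w_i^{+}v_i$ are nothing but the defining relations of $A_{\mathsf{nd}}$ at the $i$-th node. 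The resulting triangle equivalence $\TT_n\colon D^b\bigl(\Coh(\YY_n)\bigr) \to D^b(\Lambda_n\mathsf{-mod})$ sends $\cP(Y_n)$ onto $\Band(\Lambda_n)$, essentially because both subcategories are characterised by $\tau$-periodicity (cf.~\cite[Proposition 12, Corollary 6]{Tilting}).

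Combining the two steps produces the chain of equivalences
$$
\Delta_n = \left(\frac{D^b(\Lambda_n\mathsf{-mod})}{\Band(\Lambda_n)}\right)^{\omega} \stackrel{\TT_n^{-1}}{\cong} \left(\frac{D^b\bigl(\Coh(\YY_n)\bigr)}{\cP(Y_n)}\right)^{\omega} = \Delta_{Y_n}(\YY_n) \cong \bigvee_{i=1}^n \Delta_{\mathsf{nd}},
$$
and the remaining assertions (representation-discreteness, $\Hom$-finiteness, and $K_0(\Delta_n) \cong (\ZZ^2)^{\oplus n}$) follow at once from the corresponding statements for $\Delta_{\mathsf{nd}}$ proved in Sections \ref{S:KTheory}--\ref{S:ReprTheory} together with additivity of $K_0$ over a disjoint union of blocks.

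The main obstacle is the global upgrade of the Burban--Drozd equivalence. Locally, the structure of $\kA$ and of $\cP(Y_n)$ at each node is identical to the $n=1$ case, so Corollary \ref{C:Localization} takes care of the local analysis and of the identification of the local factors with $\Delta_{\mathsf{nd}}$. Globally, however, one must verify (i) that the tilting object $\kT_n$ on $\YY_n$ assembles into the specific quiver $\Lambda_n$ with the prescribed cyclic identification of the top row, and (ii) that the image of $\Perf(Y_n)$ under $\kF \stackrel{\LL}{\otimes}_{Y_n}-$ coincides with $\Band(\Lambda_n)$ rather than a strictly larger triangulated subcategory. Once these two global matchings are in place, Corollary \ref{C:nodalcurve} and Theorem \ref{T:Maintilde} are the only inputs needed to conclude.
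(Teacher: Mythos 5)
Your proposal is correct and follows essentially the same route as the paper: take the cycle $E_n$ of $n$ projective lines (a Kodaira cycle) with its Auslander sheaf of orders, use the tilting equivalence to identify $D^b\bigl(\Coh(\mathbb{E}_n)\bigr)$ with $D^b(\Lambda_n-\mod)$ and $\cP(E_n)$ with $\Band(\Lambda_n)$, and then conclude via the localization result (Corollary \ref{C:Localization}). The only divergence is that the ``global upgrade'' you flag as the main obstacle is not actually needed: \cite[Proposition 10]{Tilting} together with \cite[Corollary 6]{Tilting} already establishes the tilting equivalence and the identification of $\Perf(E_n)$ with $\Band(\Lambda_n)$ for Kodaira cycles of arbitrary length $n$, not just for the nodal cubic.
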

\begin{proof}
Let $E = E_{n}$ be a Kodaira cycle of $n$ projective lines and $\mathbb{E} = \bigl(E,
 {\mathcal End}_{E}(\kO \oplus \kI_{Z})\bigr)$, where $\kI_{Z}$ is the ideal sheaf of the singular locus $Z$. By  \cite[Proposition 10]{Tilting},  there exists an equivalence of triangulated categories
$D^b\bigl(\Coh(\mathbb{E})\bigr) \xrightarrow{\sim} D^b(\Lambda_{n}-\mod)$
identifying   $\Perf(E) \cong \cP(E) \subset D^b\bigl(\Coh(\mathbb{E})\bigr)$
with $\Band(\Lambda_{n})\subset D^b(\Lambda_{n}-\mod)$, see \cite[Corollary 6]{Tilting}. Thus,
 Corollary \ref{C:Localization} yields  the proof.
\end{proof}

\noindent
\begin{remark}
Let $A$ be the path algebra of the Kronecker quiver $\begin{xy}\SelectTips{cm}{}\xymatrix{\circ \ar@/^/[r] \ar@/_/[r] &  \circ}\end{xy}$ and
$\Band(A)$ be the full subcategory of $D^b(A-\mod)$ consisting of those objects, which are
invariant under the Auslander--Reiten translation. Note, that the objects of $\Band(A)$ are direct
sums of indecomposable objects lying in tubes. Moreover, $\Band(A)$ is closed under taking cones
and direct summands. In particular, it is a triangulated subcategory. It is interesting to note,
that the Verdier quotient category $D^b(A-\mod)/\Band(A)$ is \emph{not} $\Hom$-finite.

Indeed,  the well-known tilting equivalence $D^b(A-\mod) \rightarrow D^b\bigl(\Coh(\PP^1)\bigr)$ identifies
$\Band(A)$ with the category $D^b\bigl(\mathsf{Tor}(\PP^1)\bigr)$, where $\mathsf{Tor}(\PP^1)$
is the category of torsion coherent sheaves on $\PP^1$. Hence, by Miyachi's theorem \cite{Miyachi} we have:
$$ \frac{D^b(A-\mod)}{\Band(A)} \cong \frac{D^b\bigl(\Coh(\PP^1)\bigr)}{D^b\bigl(\mathsf{Tor}(\PP^1)\bigr)} \cong D^b\!\left(\frac{\Coh(\PP^1)}{\mathsf{Tor}(\PP^1)}\right) \cong D^b\bigl(\kk(t)-\mod\bigr),$$
 where $\kk(t)$ is the field of rational functions. Therefore, the category
 $D^b(A-\mod)/\Band(A)$ is not $\Hom$--finite.
\end{remark}

\noindent
We conclude this paper by giving a relation between our non-commutative singularity category $\Delta_{\mathsf{nd}}$ and the (classical) singularity category $\underline{\mathsf{MCM}}(O_{\mathsf{nd}})$
for the  ring $O_{\mathsf{nd}} = \kk\llbracket u, v\rrbracket/uv$.

\begin{proposition}\label{P:Quotient}
There is an equivalence of triangulated categories
$$\frac{\Delta_{\mathsf{nd}}}{\mathsf{Tria}(S_{+}, S_{-})} \stackrel{\sim}\longrightarrow \underline{\mathsf{MCM}}(O_{\mathsf{nd}}).$$
\end{proposition}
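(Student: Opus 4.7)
The plan is to construct the equivalence explicitly from the exact functor
$\Phi := \Hom_{A_{\mathsf{nd}}}(P_{*}, -) = e_{*} \cdot -\colon A_{\mathsf{nd}}-\mod \to O_{\mathsf{nd}}-\mod$,
using the identification $e_{*} A_{\mathsf{nd}} e_{*} \cong O_{\mathsf{nd}}$ coming from the Auslander-algebra description $A_{\mathsf{nd}} = \End_{O_{\mathsf{nd}}}\bigl(O_{\mathsf{nd}} \oplus \kk\llbracket u\rrbracket \oplus \kk\llbracket v\rrbracket\bigr)$. The three crucial computations are $\Phi(P_{*}) = O_{\mathsf{nd}}$, $\Phi(P_{+}) = \kk\llbracket u\rrbracket$, $\Phi(P_{-}) = \kk\llbracket v\rrbracket$, and $\Phi(S_{\pm}) = 0$. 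Since $\Phi$ is exact it descends trivially to a triangle functor $D^b(A_{\mathsf{nd}}-\mod) \to D^b(O_{\mathsf{nd}}-\mod)$ sending $\Hot^b\bigl(\add(P_{*})\bigr)$ into $\Perf(O_{\mathsf{nd}})$; post-composing with Buchweitz's equivalence $D^b(O_{\mathsf{nd}}-\mod)/\Perf(O_{\mathsf{nd}}) \simeq \underline{\MCM}(O_{\mathsf{nd}})$ yields a functor $\bar\Phi\colon \Delta_{\mathsf{nd}} \to \underline{\MCM}(O_{\mathsf{nd}})$ which annihilates $\mathsf{Tria}(S_{+}, S_{-})$ (since $\Phi(S_{\pm}) = 0$), hence descends by the universal property of Verdier quotients to the desired $\widetilde\Phi\colon \Delta_{\mathsf{nd}}/\mathsf{Tria}(S_{+}, S_{-}) \to \underline{\MCM}(O_{\mathsf{nd}})$.

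Essential surjectivity is immediate: the only non-free indecomposable MCM modules over $O_{\mathsf{nd}}$ are $\kk\llbracket u\rrbracket$ and $\kk\llbracket v\rrbracket$, and these are $\widetilde\Phi(\bar P_{\pm})$. The main step is full faithfulness. By Theorem \ref{T:MainT} and Corollary \ref{C:indinTria}, the indecomposables of $\Delta_{\mathsf{nd}}/\mathsf{Tria}(S_{+}, S_{-})$ are exactly the images $\bar P_{\pm}[n]$, $n \in \ZZ$. Step 7 in the proof of Theorem \ref{T:MainT}(b) realises every minimal string $\kS_{\tau}(-n)[n]$ as the cone of an explicit non-zero morphism $P_{\sigma} \to P_{\tau}[n]$ in $\Delta_{\mathsf{nd}}$; in the quotient these cones become zero and the morphisms become isomorphisms. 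Taking $(\tau, n) = (+, -1)$ (cone $\kS_{+}(1)[-1]$, forcing $\sigma = -$) and $(\tau, n) = (+, -2)$ (cone $\kS_{+}(2)[-2]$, forcing $\sigma = +$) yields $\bar P_{-} \cong \bar P_{+}[-1]$ and $\bar P_{+} \cong \bar P_{+}[-2]$ in the quotient, mirroring exactly the syzygy identifications $\Omega \kk\llbracket u\rrbracket \cong \kk\llbracket v\rrbracket$ and $\Omega^{2} \kk\llbracket u\rrbracket \cong \kk\llbracket u\rrbracket$ that govern $\underline{\MCM}(O_{\mathsf{nd}})$.

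Combining these identifications with the Hom computation in Theorem \ref{T:MainT}(b) forces $\Hom_{\Delta_{\mathsf{nd}}/\mathsf{Tria}(S_{+}, S_{-})}(\bar P_{\sigma}, \bar P_{\tau}[n])$ to be $\kk$ precisely when either $\sigma = \tau$ and $n$ is even, or $\sigma \neq \tau$ and $n$ is odd, and to vanish otherwise; this matches the Tate cohomology $\underline\Hom(\kk\llbracket u\rrbracket, \kk\llbracket u\rrbracket[n])$ and $\underline\Hom(\kk\llbracket u\rrbracket, \kk\llbracket v\rrbracket[n])$ read off from the $2$-periodic free resolution $\cdots \to O_{\mathsf{nd}} \xrightarrow{\cdot u} O_{\mathsf{nd}} \xrightarrow{\cdot v} O_{\mathsf{nd}} \twoheadrightarrow \kk\llbracket u\rrbracket$. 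Since $\widetilde\Phi$ sends $\mathsf{id}_{\bar P_{+}}$ to $\mathsf{id}_{\kk\llbracket u\rrbracket} \neq 0$, it induces a bijection on each pair of $1$-dimensional Hom spaces; combined with essential surjectivity, this establishes the equivalence.

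The principal obstacle is controlling the morphism spaces in the Verdier quotient $\Delta_{\mathsf{nd}}/\mathsf{Tria}(S_{+}, S_{-})$, since these are a priori only described via equivalence classes of roofs and could acquire unwanted contributions from arbitrary quasi-isomorphisms modulo $\mathsf{Tria}(S_{+}, S_{-})$. The technical resolution is that Theorem \ref{T:MainT}(b) already pins down $\Hom_{\Delta_{\mathsf{nd}}}(P_{\sigma}, P_{\tau}[n])$ completely, so the only genuinely new phenomenon in the quotient is the identification of shifts of $P_{\pm}$ induced by the vanishing of minimal strings --- and this $2$-periodic, parity-switching structure is precisely the one exhibited by $\underline{\MCM}(O_{\mathsf{nd}})$ on the other side.
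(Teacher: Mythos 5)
Your construction of the functor is fine: $\Phi=\Hom_{A_{\mathsf{nd}}}(P_*,-)$ is exact, sends $P_*$ to $O_{\mathsf{nd}}$ and $S_\pm$ to $0$, hence descends to $\widetilde\Phi\colon\Delta_{\mathsf{nd}}/\mathsf{Tria}(S_+,S_-)\to\underline{\MCM}(O_{\mathsf{nd}})$, and essential surjectivity is clear; the periodicity isomorphisms $\bar P_-\cong\bar P_+[-1]$ and $\bar P_+\cong\bar P_+[-2]$ obtained from the step-7 triangles are also correct. The gap is in faithfulness. You assert that Theorem \ref{T:MainT}(b) together with these identifications ``forces'' the quotient Hom spaces to be exactly one-dimensional, but this does not follow: a Verdier quotient can enlarge Hom spaces in ways not controlled by the ambient Hom spaces, and it visibly does so here, since $\Hom_{\Delta_{\mathsf{nd}}}(P_+,P_+[2])=0$ while the corresponding space in the quotient must become $\kk$. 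What your argument actually establishes is that every relevant Hom space is isomorphic to one of $\End(\bar P_+)$, $\End(\bar P_-)$, $\Hom(\bar P_\pm,\bar P_\mp)$, and that each of these is \emph{at least} one-dimensional; without an upper bound, $\widetilde\Phi$ could be full but not faithful. Note that you cannot obtain the upper bound from Lemma \ref{L:Verdier}, because $P_\pm[n]$ lies in neither $^{\perp}\mathsf{Tria}(S_+,S_-)$ nor $\mathsf{Tria}(S_+,S_-)^{\perp}$ --- if it did, the step-7 triangles would split, contradicting the indecomposability of $P_\pm[n]$ in $\Delta_{\mathsf{nd}}$. So an honest bound requires an actual analysis of roofs whose denominators have cones in $\mathsf{Tria}(S_+,S_-)$, which your proposal does not carry out.

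The paper sidesteps this difficulty structurally: $\Hom_{A_{\mathsf{nd}}}(P_*,-)$ induces an equivalence of abelian categories $A_{\mathsf{nd}}-\mod/\add(S_+\oplus S_-)\to O_{\mathsf{nd}}-\mod$, Miyachi's theorem upgrades this to the statement that the induced functor on bounded derived categories is a Verdier quotient functor, and Chen's lemma shows the induced $\II\colon\Delta_{\mathsf{nd}}\to\underline{\MCM}(O_{\mathsf{nd}})$ is again a quotient functor, so full faithfulness of $\Delta_{\mathsf{nd}}/\ker(\II)\to\underline{\MCM}(O_{\mathsf{nd}})$ is automatic. The classification of indecomposables is then needed only to identify $\ker(\II)$ with $\mathsf{Tria}(S_+,S_-)$, using that the minimal strings die while $\Hom_{A_{\mathsf{nd}}}(P_*,P_+\oplus P_-)\cong\kk\llbracket u\rrbracket\oplus\kk\llbracket v\rrbracket$ is nonzero in $\underline{\MCM}(O_{\mathsf{nd}})$. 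To salvage your hands-on route you must supply the missing upper bound on the quotient Hom spaces; otherwise the structural argument is the cleaner repair.
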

\begin{proof}
The functor $\mathsf{Hom}_{A_{\mathsf{nd}}}(P_{*}, -)\colon A_{\mathsf{nd}}-\mod \rightarrow O_{\mathsf{nd}}-\mod$ is exact and induces an equivalence
of abelian categories $A_{\mathsf{nd}}-\mod/\add(S_{+} \oplus  S_{-}) \xrightarrow{\sim} O_{\mathsf{nd}}-\mod$ \cite[Theorem 4.8]{Tilting}. Using Miyachi's compatibility of Serre and Verdier quotients
\cite[Theorem 3.2]{Miyachi}, we see that $\PP=\mathsf{Hom}_{A_{\mathsf{nd}}}(P_{*}, -)\colon D^b\bigl(A_{\mathsf{nd}}-\mod\bigr) \rightarrow D^b\bigl(O_{\mathsf{nd}}-\mod\bigr)$ is a quotient functor in the sense of \cite{Chen}, i.e.~$D^b\bigl(A_{\mathsf{nd}}-\mod\bigr)/\ker(\PP) \xrightarrow{\sim} D^b\bigl(O_{\mathsf{nd}}-\mod\bigr)$.  A direct calculation shows that
$\mathsf{Hom}_{A_{\mathsf{nd}}}(P_{*}, P_{*}) \cong O_{\mathsf{nd}}.$
Hence, $\PP$ induces a functor $\II \colon \Delta_{\mathsf{nd}} \rightarrow \underline{\mathsf{MCM}}(O_{\mathsf{nd}})$ and the following diagram commutes.
$$
 \begin{xy}\SelectTips{cm}{}
\xymatrix{
 \Hot^b\bigl(\add(P_{*})\bigr) \ar@{^{(}->}[rr]  \ar[d]^{\PP} && D^b(A_{\mathsf{nd}}-\mod) \ar[rr]^{\mathsf{can}} \ar[d]^{\PP}   && \Delta_{\mathsf{nd}} \ar[d]^{\II}
 \\
\Perf(O_{\mathsf{nd}}) \ar@{^{(}->}[rr] && D^b(O_{\mathsf{nd}}-\mod)  \ar[rr]^{\mathsf{can}}  && \underline{\mathsf{MCM}}(O_{\mathsf{nd}})
}
\end{xy}
$$
By \cite[Lemma 2.1]{Chen}, $\II$ is again a quotient functor. Using the classification of indecomposable objects in $\Delta_{\mathsf{nd}}$ and the fact that $\mathsf{Hom}_{A_{\mathsf{nd}}}(P_{*}, P_{+} \oplus P_{-}) \cong \kk\llbracket u\rrbracket \oplus \kk\llbracket v\rrbracket,$ we obtain $\ker(\II)=\mathsf{Tria}(S_{+}, S_{-})$. This concludes the proof.\end{proof}

\begin{remark}
After submitting this article, we learned that Thanhoffer de V\"olcsey and Van den Bergh proved a general Theorem, which contains Proposition \ref{P:Quotient} as a special case. Namely, in the notations of Section \ref{S:KTheory} assume that $O$ is an \emph{isolated} singularity and $A$ has finite global dimension. Let $e\in A$ be the idempotent corresponding to the identity endomorphism of $O$. We denote the simple $A$--modules $S_{0}, \ldots, S_{r}$ in  such a way that $S_{0}$ has projective cover $Ae$. Then the exact functor $eA \otimes_{A} -\colon A-\mod \rightarrow O-\mod$ induces an equivalence of triangulated categories
\begin{align}
\frac{\Delta_{O}(A)}{\mathsf{Tria}(S_{1}, \ldots, S_{r})} \stackrel{\sim}\longrightarrow \underline{\MCM}(O),
\end{align}
see \cite[Theorem 5.1.1 and Lemma 5.1.3]{ThanhofferVandenBergh}.
Moreover, they show that $\Delta_{O}(A)$ is Hom--finite in this generality, see
\cite[Proposition 5.1.4]{ThanhofferVandenBergh}.
Using quite different methods, slight generalizations of both results were subsequently
obtained  in recent  work of Kalck and Yang \cite{KalckYang}.
\end{remark}

\section{Summary}
In this section, we collect the major results  obtained in this article.
Let $Y$ be a nodal algebraic curve, $Z$ its singular locus,  $\kI = \kI_Z$ and
$\YY = (Y, \kA)$ for $\kA = {\mathcal End}_Y(\kO \oplus \kI)$.
Similarly, let $O = \kk\llbracket u, v\rrbracket/(uv)$, $\idm = (u, v)$
 and $A = {\End}_O(O \oplus \idm)$. Then the following results are true.
\begin{itemize}
\item The category $\Delta_Y(\YY)$ splits into a union of $p$ blocks $\Delta_{\mathsf{nd}}$, where
$p$ is the number of singular points of $Y$ and  $\Delta_{\mathsf{nd}} = \Delta_O(A)$, see Corollary
\ref{C:nodalcurve}.
\item The category  $\Delta_{\mathsf{nd}}$ is $\Hom$--finite and representation discrete. In particular, its indecomposable objects and the morphism spaces between them are explicitly known, see Theorem \ref{T:MainT}. Moreover, one can compute its Auslander--Reiten quiver, see Remark \ref{r:AR-quiver}.
\item We have: $K_0\bigl(\Delta_{\mathsf{nd}}\bigr) \cong \mathbb{Z}^2$, see Theorem \ref{T:ResultsonVerdierQout}.
\end{itemize}
Moreover, the category $\Delta_{\mathsf{nd}}$ admits  an alternative ``quiver description'' in terms
of representations of a certain gentle algebra $\Lambda$, see Section \ref{S:Quivers}.

\end{document}